\newtheorem{thm}{Th\'eor\`eme}[section]
\newtheorem{prop}[thm]{Proposition}
\newtheorem{lemme}[thm]{Lemme}
\newtheorem{props}[thm]{Propri\'et\'es}
\newtheorem{define}[thm]{D\'efinition}
\newtheorem{cor}[thm]{Corollaire}
\newtheorem{ex}[thm]{Exemple}
\newtheorem{rem}[thm]{Remarque}
\newtheorem{conj}[thm]{Conjecture}
\newtheorem{quest}{Question}
\theoremstyle{plain}
\renewcommand{\thm}{%
\oldtheo\hypertarget{\***@currentHref\endcsname}{}}}
\newcommand\M{\mathbb M}
\newcommand\N{\mathbb N}
\newcommand\Z{\mathbb Z}
\newcommand\Q{\mathbb Q}
\newcommand\R{\mathbb R}
\newcommand\A{\mathcal A}
\newcommand\D{\mathcal D}
\newcommand\transp[1]{\, {}^t \! {#1}}
\newcommand\transpA{\, {}^t \!\! A}
\newcommand\abs[1]{|#1|}
\newcommand\eq{\Leftrightarrow}
\newcommand\imp{\Rightarrow}
\newcommand\longeq{\Longleftrightarrow}
\newcommand\m[1]{T_{#1}}
\newcommand\floor[1]{\left\lfloor #1 \right\rfloor}
\newcommand\ceil[1]{\left\lceil #1 \right\rceil}
\newcommand\discr{\operatorname{discr}}
\newcommand\Tr{\operatorname{Tr}}
\newcommand\Norm{\operatorname{N}}
\newcommand\Det{\operatorname{Det}}
\newcommand\pgcd{\operatorname{pgcd}}
\newcommand\pat[1]{\textsf{#1}}
\newcommand\pattranspA{\, {}^t \! \pat{A}}
\newcommand\defi[1]{\textsl{#1}}
\begin{document}

\title{Construction de fractions continues p\'eriodiques uniform\'ement born\'ees}

\author{Paul MERCAT}

\address{Université Paris-Sud 11 \\ 91405 Orsay}
\email{paul.mercat@math.u-psud.fr}
\urladdr{http://www.math.u-psud.fr/~mercat}

\date{\today}

\begin{abstract}
	Nous construisons, dans les corps quadratiques réels, une infinité de fractions continues périodiques uniformément bornées, avec une borne qui semble meilleure que celle connue jusqu'ici.
	Nous faisons cela en partant de développements en fractions continues de la même forme que ceux des réels $\sqrt{n} + n$. Et ceci nous permet d'obtenir de plus qu'il existe une infinité de corps quadratiques contenant une infinité de développements en fractions continues périodiques formées seulement des entiers $1$ et $2$.
	Nous montrons aussi qu'une conjecture de Zaremba implique une conjecture de McMullen, en construisant des fractions continues périodiques à partir de développements en fractions continues de rationnels. 
\end{abstract}

\maketitle

\tableofcontents

\section{Introduction} \label{intro}
Il est bien connu qu'un r\'eel est quadratique sur $\Q$ si et seulement si son d\'eveloppement en fraction continue est p\'eriodique \`a partir d'un certain rang.
Il est par contre remarquable que dans un corps quadratique donn\'e il existe une infinit\'e de fractions continues p\'eriodiques uniform\'ement born\'ees.
Par exemple, $\Q[\sqrt{10}]$ contient la suite de fractions continues p\'eriodiques
$$[\overline{1,1,2,1,1,2}]$$
$$[\overline{1,1,2,1,\textcolor{red}{2,1,1},1,2,\textcolor{red}{2,1,1}}]$$
$$[\overline{1,1,2,1,\textcolor{red}{2,1,1}, \textcolor{red}{2,1,1},1,2,\textcolor{red}{2,1,1}, \textcolor{red}{2,1,1}}]$$
$$[\overline{1,1,2,1,\textcolor{red}{2,1,1},\textcolor{red}{2,1,1},\textcolor{red}{2,1,1},1,2,\textcolor{red}{2,1,1},\textcolor{red}{2,1,1},\textcolor{red}{2,1,1}}]$$
$$...$$
qui est uniform\'ement born\'ee par 2. Et Wilson a démontré dans \cite{wilson} qu'il y avait des suites semblables dans tous les corps quadratiques réels. Mais peut-on obtenir une borne ind\'ependante du corps quadratique ?
Par exemple, peut-on trouver dans n'importe quel corps quadratique, un r\'eel quadratique dont le d\'eveloppement en fraction continue n'a que des $1$ et des $2$ ?
Ces questions sont ouvertes.
Dans son papier \cite{mcmullen}, McMullen s'intéresse à de telles suites de fractions continues, et redémontre le résultat de Wilson en donnant d'autres exemples de telles suites. Il établit des liens avec des questions sur les géodésiques fermées de certaines variétés arithmétiques, ainsi qu'avec des questions de théorie des nombres sur le comptage d'idéaux.

Dans cet article, nous nous int\'eressons à ces suites de fractions continues p\'eriodiques
qui restent dans un corps quadratique donn\'e. Nous allons voir en particulier que dans une infinit\'e de corps quadratiques il existe une infinité de fractions continues p\'eriodiques form\'ees seulement des entiers $1$ et $2$.

\subsection{Fractions continues}
Tout r\'eel $x$ peut se d\'evelopper en fraction continue
$$x = [a_1, a_2, a_3, ... ] = a_1 + \cfrac{1}{a_2+\cfrac{1}{a_3+\cfrac{1}{...}}}$$
avec des quotients partiels $a_i \in \Z$ et $a_i \geq 1$ si $i \geq 2$. Si le d\'eveloppement est p\'eriodique \\ \mbox{(c'est-\`a-dire s'il existe un entier $p$ tel que pour tout $i$, on ait $a_i = a_{i+p}$)}, \\
on notera $x = [\overline{a_1, a_2, ..., a_{p}}]$.

\begin{define}
On appellera \defi{quasi-palindromique} une fraction continue de la forme $[\overline{a_0, a_1, a_2, a_3, ... , a_3, a_2 , a_1}]$.
\end{define}
Dans cette d\'efinition, la partie sym\'etrique $a_1, a_2, ... , a_2, a_1$ peut ou non avoir un terme m\'edian.
Dans la suite, la notation $(a_0, a_1, a_2, ... , a_{k-1}, a_k)^n$ signifie que le motif $a_0, a_1, a_2, ... , a_{k-1}, a_k$ est r\'ep\'et\'e $n$ fois.

Pour des références sur les fractions continues, on pourra consulter par exemple~\cite{perron}, \cite{hp}, \cite{schmidt}, ou encore~\cite{bugeaud}.

Dans le chapitre~\ref{preuve}, nous d\'emontrons le r\'esultat nouveau suivant :

\begin{thm} \label{MN}

Si la fraction continue p\'eriodique quasi-palindromique $$[\overline{a_0, a_1, a_2, ... , a_2 , a_1}]$$
est dans $\Q[\sqrt{\delta}]$, alors il existe deux uplets d'entiers strictement positifs \\ $(b_1, b_2, ... , b_k)$ et $(c_1, c_2, ... , c_l)$ tels que $\Q[\sqrt{\delta}]$ contienne la suite non constante de fractions continues p\'eriodiques
$$[\overline{b_1, b_2, ... , b_k, (a_0, a_1, a_2, ... , a_2, a_1)^n, c_1, c_2, ... , c_l, (a_1, a_2, ... , a_2, a_1, a_0)^n}]$$


En outre, si $m$ est un majorant des entiers $a_i$, alors on peut demander à ce que $2m+1$ soit un majorant des entiers $b_i$ et $c_i$.

\end{thm}


Cela permet de retrouver le résultat de Wilson \cite{wilson} : 

\begin{cor}
Pour tout corps quadratique $\Q[\sqrt{\delta}]$, il existe un réel $m_\delta$ et une infinit\'e de fractions continues p\'eriodiques $[ \overline{ a_0, a_1, ..., a_n } ] \in \Q[\sqrt{\delta}]$ avec $1 \leq a_i \leq m_{\delta}$.
\end{cor}
Ici, $m_{\delta}$ d\'epend seulement de $\delta$. Par exemple on peut prendre $m_{10} = 2$ d'apr\`es la suite annonc\'ee au d\'ebut.
Nous d\'emontrons dans le chapitre~\ref{exs}, que l'on peut m\^eme trouver dans tout corps quadratique r\'eel, une infinit\'e de fractions continues p\'eriodiques ne comportant que trois quotients partiels diff\'erents.

En appliquant le th\'eor\`eme aux r\'eels $\sqrt{\delta}+\floor{\sqrt{\delta}}$, on obtient que l'on peut prendre $m_{\delta} = 4\sqrt{\delta}+1$, ce qui am\'eliore le r\'esultat de Wilson qui dit que l'on peut prendre $m_\delta = O(\delta)$ (voir le paragraphe \ref{rqp} sur les r\'eels quasi-palindromiques).

La preuve du th\'eor\`eme permettra aussi d'obtenir le r\'esultat nouveau :

\begin{thm}
Il existe une infinit\'e de corps quadratiques $\Q[\sqrt{\delta}]$ dans lesquels il existe une infinit\'e de fractions continues p\'eriodiques $[ \overline{ a_0, a_1, ..., a_n } ]$ avec $a_i \in \{ 1, 2 \}$.
\end{thm}

Dans son article \cite{mcmullen}, McMullen émet la conjecture suivante :

\begin{conj}[McMullen] \label{conjMcMullen}
Dans tout corps quadratique r\'eel, il existe une infinit\'e de fractions continues p\'eriodiques form\'ees seulement des entiers 1 et 2.
\end{conj}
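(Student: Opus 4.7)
Le plan est d'\'etablir l'implication annonc\'ee dans le r\'esum\'e : une forme appropri\'ee de la conjecture de Zaremba sur l'alphabet $\{1,2\}$ entra\^{\i}ne celle de McMullen. Le point de d\'epart est la correspondance classique entre fractions continues p\'eriodiques et produits matriciels : au mot $[\overline{a_1, \ldots, a_p}]$ on associe la matrice $M = M_{a_1} \cdots M_{a_p}$ avec $M_a = \begin{pmatrix} a & 1 \\ 1 & 0 \end{pmatrix}$, et le r\'eel p\'eriodique est point fixe de l'homographie induite par $M$, racine d'une \'equation du second degr\'e de discriminant $\Tr(M)^2 - 4 \Det(M) = \Tr(M)^2 \mp 4$. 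Cette racine appartient \`a $\Q[\sqrt{\delta}]$ si et seulement si ce discriminant est un multiple rationnel carr\'e de $\delta$.

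L'\'equation de Pell $t^2 \mp 4 = \delta u^2$ fournit une infinit\'e de solutions enti\`eres $(t,u)$, donn\'ees par les puissances de l'unit\'e fondamentale d'un ordre convenable de $\Q[\sqrt{\delta}]$, le choix du signe $\mp$ s'effectuant selon la parit\'e de la p\'eriode $p$ puisque $\Det(M) = (-1)^p$. Pour un $\delta$ fix\'e, il suffit donc de montrer que, parmi les traces des \'el\'ements du mono\"ide $\mathcal{M} = \langle M_1, M_2 \rangle$, une infinit\'e de valeurs co\"incident (au signe pr\`es) avec des \'el\'ements de la suite de Pell associ\'ee \`a $\delta$.

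J'invoquerai alors une forme quantitative de Zaremba sur l'alphabet $\{1,2\}$, dans l'esprit de Bourgain--Kontorovich, qui garantit que les traces r\'ealisables par $\mathcal{M}$ recouvrent une proportion positive des entiers, voire toutes les classes de congruence convenables. Par intersection avec la suite de Pell, qui est suffisamment r\'eguli\`ere (r\'ecurrence lin\'eaire d'ordre deux), on extraira une infinit\'e de telles traces, et donc une infinit\'e de fractions continues p\'eriodiques dans $\Q[\sqrt{\delta}]$ \`a quotients partiels dans $\{1,2\}$.

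Le principal obstacle sera pr\'ecis\'ement le passage de l'\'enonc\'e original de Zaremba (surjectivit\'e sur les d\'enominateurs) \`a la r\'ealisation de traces prescrites : c'est le contenu arithm\'etique non trivial de l'implication, et sa v\'erification d\'etaill\'ee occupera l'essentiel de la preuve. On pourra \'eventuellement combiner cette construction avec le th\'eor\`eme~\ref{MN} pour multiplier les solutions dans chaque corps, \`a condition de s'assurer que les uplets $(b_i)$ et $(c_i)$ fournis restent dans $\{1,2\}$ --- la borne $2m+1 = 5$ donn\'ee par le th\'eor\`eme ne suffisant pas a priori.
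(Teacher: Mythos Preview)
L'\'enonc\'e que vous tentez de d\'emontrer est une \emph{conjecture} ouverte : le papier ne la prouve pas, il la cite comme question non r\'esolue due \`a McMullen. Il n'y a donc pas de preuve dans l'article \`a laquelle comparer la v\^otre. Ce que l'article \'etablit de plus proche est un r\'esultat \emph{conditionnel} (th\'eor\`eme~\ref{thm_fcfp} et la discussion de la section~\ref{s_zaremba}) : une variante de Zaremba sur l'alphabet $\{1,2\}$ avec contraintes aux bords impliquerait la conjecture de McMullen, mais cette variante est elle-m\^eme ouverte.

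Votre plan comporte par ailleurs un trou qui le rend inop\'erant m\^eme comme r\'eduction. Vous voulez montrer qu'une infinit\'e de traces d'\'el\'ements de $\langle M_1,M_2\rangle$ tombent dans la suite de Pell de $\delta$. Or les r\'esultats de type Bourgain--Kontorovich donnent au mieux des \'enonc\'es de \emph{densit\'e} (densit\'e~$1$, ou proportion positive) sur les d\'enominateurs r\'ealisables ; la suite de Pell, elle, cro\^it exponentiellement et est donc de densit\'e nulle. Une intersection d'un ensemble de densit\'e positive avec un ensemble de densit\'e nulle peut parfaitement \^etre finie : l'argument de densit\'e ne conclut pas. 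De plus, Zaremba porte sur les \emph{d\'enominateurs} (une colonne de la matrice), pas sur les traces ; vous reconnaissez vous-m\^eme que ce passage est \og le principal obstacle \fg{}, mais c'est pr\'ecis\'ement tout le contenu de la conjecture, et non un d\'etail technique \`a remplir. L'approche du papier contourne ce probl\`eme autrement : le th\'eor\`eme~\ref{thm_fcfp} fabrique une fraction continue p\'eriodique $BAC\transpA$ dont le discriminant se factorise en $16c^2b^2\delta$ d\`es que le \emph{d\'enominateur} $c$ du rationnel est solution de Pell --- on retombe alors exactement sur l'\'enonc\'e de Zaremba (sur les d\'enominateurs), mais avec borne~$2$, ce qui reste conjectural.
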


Les deux cas particuliers suivants semblent aussi ouverts :

\begin{conj} \label{conj12} 
Dans tout corps quadratique r\'eel, il existe une fraction continue p\'eriodique form\'ee seulement des entiers 1 et 2.
\end{conj}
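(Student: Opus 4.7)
The strategy is to attempt a refinement of Theorem~\ref{MN} so that, starting from a quasi-palindromic periodic continued fraction in $\Q[\sqrt{\delta}]$ with partial quotients in $\{1,2\}$, the bridge tuples $(b_1,\dots,b_k)$ and $(c_1,\dots,c_l)$ can also be chosen in $\{1,2\}$. With such a refinement in hand, Conjecture~\ref{conj12} reduces to producing, in every real quadratic field, at least one quasi-palindromic periodic continued fraction with all partial quotients equal to $1$ or $2$. This base case looks significantly more tractable than the full conjecture: one has great freedom in the length of the palindrome, so a natural attack is a parametric construction of palindromes in $\{1,2\}^*$ indexed by $\mathrm{SL}_2(\Z)$-conjugacy classes of indefinite integral quadratic forms, possibly after adjusting discriminants by a square factor (which does not change $\Q[\sqrt{\delta}]$).

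For the refinement itself, I would open the proof of Theorem~\ref{MN} and isolate the source of the bound $2m+1$. The bridges essentially encode the continued fraction of an auxiliary element attached to a unit of the order of discriminant $\delta$; the bound $2m+1$ comes from a matrix inequality that allows one uncontrolled partial quotient on each side of the palindromic block. By replacing a single unit by a carefully chosen power, or by prepending a fixed short word in $\{1,2\}^*$, one can hope to absorb the offending partial quotients and force the resulting bridge words to remain in $\{1,2\}$, at the cost of enlarging $k$ and $l$. If successful, a bootstrapping loop (apply the theorem, then re-apply it to a sub-palindrome of the new word) would iteratively drive the maximal partial quotient down to $2$.

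The main obstacle, and the reason the conjecture is open, lies exactly in this choice of unit: a priori, nothing guarantees that some unit of the order produces a bridge whose continued fraction has partial quotients in $\{1,2\}$. This is in essence a Zaremba-type condition restricted to the sub-family of denominators coming from the norm form of $\Q[\sqrt{\delta}]$. Since the abstract already advertises an implication from Zaremba's conjecture to McMullen's Conjecture~\ref{conjMcMullen}, of which~\ref{conj12} is a special case, I expect that the plan I have just sketched can at best yield a proof conditional on such a Zaremba-type statement, and that an unconditional proof would require either a direct construction of the required units (presumably exploiting class field theory of real quadratic orders) or a genuinely new ergodic input on the action of the Gauss map on $\Q[\sqrt{\delta}]$.
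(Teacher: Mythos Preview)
The statement you are attempting is labelled as a \emph{conjecture} in the paper, and the paper does not prove it: it is presented as open, with only the remark that it has been verified numerically for all $\Q[\sqrt{\delta}]$ with $\delta<127$. There is therefore no proof in the paper to compare your proposal against.

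Your proposal is not a proof either, as you yourself recognise in the final paragraph: the key step---forcing the bridge words produced by Theorem~\ref{MN} to lie in $\{1,2\}^*$---requires precisely a Zaremba-type input on a restricted family of denominators, and you correctly observe that nothing in the construction guarantees this unconditionally. The paper makes exactly this connection explicit elsewhere: Theorem~\ref{thm_fcfp} shows that Zaremba's conjecture implies Conjecture~\ref{conjM} (and the stronger McMullen Conjecture~\ref{conjMcMullen}), and Conjecture~\ref{maconj} together with the remark following it indicate that the author has considered and been unable to close the same gap you identify. Your bootstrapping idea of iterating Theorem~\ref{MN} to drive the maximal partial quotient down does not work as stated, since the bound $2m+1$ goes the wrong way: each application can only \emph{increase} the bound on the bridge entries, not decrease it. So the genuine obstacle is exactly the one you name, and it remains open.
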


\begin{rem}
La conjecture \ref{conj12} est vraie pour tous les corps quadratiques $\Q[\sqrt{\delta}]$ pour $\delta < 127$.
Par exemple, $\Q[\sqrt{19}]$ contient la fraction continue p\'eriodique de longueur 78 suivante :
$$ [ \overline{2,1,1,1,\pat{M},1,1,1,2,\transp{\pat{M}}} ]  \text{\ o\`u\ } \pat{M} \mathrm{\ est\ le\ motif\ suivant\ :}$$
$$(1,1,1,1,1,1,1,1,1,2,1,1,2,2,1,2,1,1,2,1,1,1,1,2,2,1,1,1,2,1,1,2,1,2,2)$$
et $\transp{\pat{M}}$ est le motif miroir. 
\end{rem}

Nous avons vérifié cette remarque par ordinateur. Voir la remarque \ref{r_conjMcM} pour plus de détails.

\begin{rem}
	Dans~\cite{jp}, Jenkinson et Pollicott étudient l'ensemble $E_2$ des réels dont le développement en fraction continue est infini et ne s'écrit qu'avec les nombres $1$ et $2$.
	La conjecture~\ref{conj12} se reformule comme suit : \\L'intersection de l'ensemble $E_2$ avec tout corps quadratique réel $\Q[\sqrt{\delta}]$ est non vide.
\end{rem}

\begin{conj} \label{conjM}
Il existe un entier $m$ tel que tout corps quadratique r\'eel $\Q[\sqrt{\delta}]$ contienne une infinit\'e de fractions continues p\'eriodiques uniform\'ement born\'ees par $m$.
\end{conj}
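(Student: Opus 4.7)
The plan is to use Theorem~\ref{MN} as a lever that reduces Conjecture~\ref{conjM} to a single uniform existence statement about quasi-palindromic seeds, and then to attack that auxiliary statement directly.

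First, I would observe the following immediate reduction. Suppose one can prove the auxiliary statement: there exists a constant $m_0$ such that every real quadratic field $\Q[\sqrt{\delta}]$ contains at least one periodic quasi-palindromic continued fraction $[\overline{a_0, a_1, \ldots, a_1}]$ all of whose partial quotients satisfy $a_i \leq m_0$. Then Theorem~\ref{MN}, applied to this seed, automatically produces an infinite non-constant family of periodic continued fractions inside the same field, with all partial quotients bounded by $2m_0 + 1$. Setting $m := 2m_0 + 1$ would immediately give Conjecture~\ref{conjM}. So the whole problem collapses to producing, in each real quadratic field, one quasi-palindromic seed of universally bounded complexity.

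Second, to attack the auxiliary statement, I would refine the choice of the initial real used in the paper's corollary, which applies Theorem~\ref{MN} to $\sqrt{\delta}+\lfloor\sqrt{\delta}\rfloor$ and only achieves the non-uniform bound $O(\sqrt{\delta})$. The natural candidates for replacements are the reduced elements $(P+\sqrt{D})/Q$ attached to reduced ideals of small norm in the order of $\Q[\sqrt{\delta}]$, or elements extracted from the fundamental unit via Gauss's reduction algorithm, using the reversibility of the ideal-reduction cycle to enforce the palindromic symmetry automatically. The aim would be to show that in every discriminant one can always locate an ideal class representative whose reduction cycle has a palindromic segment of universally bounded partial quotients.

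Third, the main obstacle is exactly this uniformity in $\delta$: bounding the partial quotients of some CF expansion uniformly in the discriminant is essentially the core of all the open problems cited in this introduction. The strongest available route is the one flagged in the abstract: assuming Zaremba's conjecture, one can convert any rational with bounded-partial-quotient expansion into a periodic continued fraction in $\Q[\sqrt{\delta}]$ with the same bound, which would in fact settle the stronger Conjecture~\ref{conjMcMullen} and hence Conjecture~\ref{conjM} with $m=2$. Short of assuming Zaremba, one would need a covering statement of Bourgain--Kontorovich flavour, guaranteeing that the set of discriminants realised by small quasi-palindromic seeds exhausts all real quadratic fields. I expect this covering/density step to be the genuinely hard part, and I do not see a way to carry it out with the tools used in this paper alone.
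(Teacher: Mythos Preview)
The statement you are addressing is Conjecture~\ref{conjM}, which the paper does \emph{not} prove unconditionally; there is therefore no ``paper's own proof'' to compare against. What the paper does establish is a conditional result: Theorem~\ref{thm_fcfp} (equivalently Theorem~\ref{z_mcm}) shows that Zaremba's conjecture implies Conjecture~\ref{conjM}. Your proposal is, correctly and explicitly, not a proof either; you end by saying you do not see how to complete the argument with the tools of the paper. So the honest assessment is that both you and the paper leave the conjecture open, and the interesting comparison is between the reduction strategies.

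Your primary reduction---use Theorem~\ref{MN} to collapse the problem to finding, in every real quadratic field, one quasi-palindromic seed with partial quotients bounded by an absolute $m_0$---is exactly the \emph{Question} the paper states immediately after Conjecture~\ref{conjM}, and the paper does not pursue it further. The paper's actual route toward Conjecture~\ref{conjM} is different and does \emph{not} go through Theorem~\ref{MN} at all: Theorem~\ref{thm_fcfp} builds a periodic continued fraction in $\Q[\sqrt{\delta}]$ directly out of the finite continued fraction of a rational $a/c$, where $c$ is the $x$-part of a Pell solution $c^2-\delta b^2=\pm 1$. This sidesteps the quasi-palindromic seed problem entirely and converts the uniformity question into Zaremba's conjecture on denominators. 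Your second paragraph (reduced ideals of small norm, palindromic reduction cycles) is a reasonable heuristic but is not something the paper attempts.

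One inaccuracy in your third paragraph: Zaremba's conjecture, via the paper's construction, yields Conjecture~\ref{conjM} with bound $m+1$ (because the $a_1-1$ or $a_n-1$ terms can vanish and force a merge $x,0,y\mapsto x+y$), not with the same bound, and it does \emph{not} directly yield McMullen's Conjecture~\ref{conjMcMullen}. The paper is explicit that only a strengthened Zaremba with the extra endpoint constraints $a_1=a_n=2$ would give the $\{1,2\}$ conclusion; see the variant conjecture and Remark~\ref{r_conjMcM} in Section~\ref{s_zaremba}.
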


%
Nous avons obtenu un le lien entre cette conjecture sur les fractions continues périodiques et la conjecture de Zaremba suivante sur les fractions continues finies :

\begin{conj}[Zaremba] \label{conjZar}
	Il existe une constante $m$ telle que pour tout entier $q \geq 1$, il existe un entier $p$ premier à $q$ tel que l'on ait
	\[ \frac{p}{q} = [a_0, a_1, a_2, \dots ] \]
	où les entiers $a_i$ sont entre $1$ et $m$.
\end{conj}

Voici un lien entre ces deux conjectures :

\begin{thm}
	La conjecture de Zaremba~\ref{conjZar} implique la conjecture~\ref{conjM}.
\end{thm}

Nous obtenons ce dernier résultat comme corollaire du théorème suivant, qui permet de construire des fractions continues périodiques dans un corps donné à partir de fractions continues de certain rationnels :

\begin{thm} \label{thm_fcfp}
	Soient $a$, $b$, $c$ et $\delta$ des entiers strictement positifs tels que
	\begin{itemize}
		\item $b$ et $c$ sont solution de l'équation de Pell-Fermat : $c^2 - \delta b^2 = \pm 1$,
		\item $a$ et $c$ sont premiers entre eux et $a < c$.
	\end{itemize}
	Alors on a l'une des égalités
	\[
		\frac{c-a + b \sqrt{\delta}}{c} = [\overline{1,1,a_1-1, a_2, a_3, \dots, a_{n-1}, a_n, 1, 1, a_n - 1, a_{n-1}, a_{n-2}, \dots, a_2, a_1}]
	\]
	ou
	\[
		\frac{c-a + b \sqrt{\delta}}{c} = [\overline{1,1,a_1-1, a_2, a_3, \dots, a_{n-1}, a_n - 1, 1, 1, a_n, a_{n-1}, a_{n-2}, \dots, a_2, a_1}],
	\]
	où $[0,a_1, a_2, \dots, a_{n-1}, a_n]$ est le développement en fraction continue du rationnel $\frac{a}{c}$.
\end{thm}

Si l'on a des entiers nuls dans la fraction continue donnée par ce théorème (c'est le cas si $a_1 = 1$ ou $a_n = 1$), il suffit de remplacer chaque triplet $x, 0, y$ par $x+y$ pour obtenir une vraie fraction continue. Remarquons aussi que les entiers $\delta$ qui satisfont les hypothèses de ce théorème sont nécessairement non carrés.\\

\`A la vue du th\'eor\`eme~\ref{MN}, on peut se poser la question suivante, qui impliquerait la conjecture~\ref{conjM} :

\begin{quest}
Existe-t-il une constante $m$ telle que dans tout corps quadratique r\'eel, il existe une fraction continue p\'eriodique de la forme $ [ \overline{ a_0, a_1, a_2, ... , a_2, a_1 } ] $ uniform\'ement born\'ee par $m$ ?
\end{quest}

Cependant, m\^eme sans la condition sur la forme de la fraction continue p\'eriodique, le probl\`eme semble ouvert :

\begin{conj} \label{mcmullen1}
Il existe une constante $m$ telle que dans tout corps quadratique r\'eel, il existe une fraction continue périodique uniform\'ement born\'ee par $m$.
\end{conj}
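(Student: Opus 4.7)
The plan is to reduce this existence statement to a Zaremba-type statement about rationals, via Theorem~\ref{thm_fcfp}. Given a real quadratic field $\Q[\sqrt\delta]$, Pell--Fermat furnishes infinitely many coprime pairs $(b_n, c_n)$ with $c_n^2 - \delta b_n^2 = \pm 1$, giving an exponentially growing sequence of admissible denominators. If for some index $n$ we can find $a < c_n$ coprime to $c_n$ such that the finite continued-fraction expansion of $a/c_n$ has all partial quotients bounded by an absolute constant $M$, then Theorem~\ref{thm_fcfp} produces a periodic continued fraction in $\Q[\sqrt\delta]$ whose partial quotients are bounded by $\max(2,M)$. Taking $m = \max(2,M)$ would yield the desired uniform bound independent of $\delta$.

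\textbf{Key reduction.} The question therefore reduces to the following weak form of Zaremba's conjecture: there is an absolute constant $M$ such that for every squarefree non-square $\delta \geq 2$, at least one denominator of the Pell sequence $(c_n(\delta))_{n \geq 1}$ admits a coprime numerator $a < c_n$ with $a/c_n$ having partial quotients $\leq M$. This is considerably weaker than Conjecture~\ref{conjZar} itself: we may use \emph{any} of the infinitely many $c_n$, and we do not need to handle every integer $c$, only one per field. Note also that this is strictly weaker than Conjecture~\ref{conjM}, which asks for infinitely many periodic expansions per field; the statement~\ref{mcmullen1} requires only one, so a single good $(b_n,c_n,a)$ triple suffices per $\delta$.

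\textbf{Main obstacle.} The hard point is making effective the density-one results of Bourgain--Kontorovich and their refinements (Frolenkov--Kan, Huang): they show that almost all integers are Zaremba for a fixed small $M$, but such density statements do not automatically apply to a sparse arithmetic sequence like $(c_n(\delta))$, and they do not readily produce a bound uniform in $\delta$. One possible line of attack is to combine the affine sieve and expander-graph machinery of Bourgain--Kontorovich with the congruence structure of Pell denominators, which, as $\delta$ varies, fill many residue classes modulo any fixed modulus; one would then need to show that this distributional information forces at least one $c_n(\delta)$ into the Zaremba set with bound $M$. A more elementary route is to exhibit $a$ explicitly in terms of convergents of $\sqrt\delta$, seeking an $a/c$ whose partial quotients can be controlled directly by the continued-fraction expansion of the fundamental unit. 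A third alternative is to bypass Theorem~\ref{thm_fcfp} entirely and tighten Theorem~\ref{MN} so that its output bound $2m+1$ depends only on an absolute $m$; this would require producing, in each $\Q[\sqrt\delta]$, a quasi-palindromic seed whose partial quotients do not grow with $\delta$, which is precisely the same kind of uniformity obstruction reappearing in a different guise.
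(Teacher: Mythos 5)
The statement you were asked to prove is Conjecture~\ref{mcmullen1}; the paper presents it as an open problem and offers no proof of it. What the paper does establish is partial progress, Corollary~\ref{thm_ibk}: for a set of squarefree $\delta \leq N$ of size at least $\sqrt{N}/C$, the field $\Q[\sqrt{\delta}]$ contains a periodic continued fraction bounded by $51$. Your text, to its credit, does not claim a proof either: it reduces the conjecture, via Theorem~\ref{thm_fcfp}, to the statement that for every $\delta$ at least one Pell denominator $c_n(\delta)$ admits a coprime numerator with Zaremba-bounded expansion, and then honestly flags the remaining gap, namely that Bourgain--Kontorovich type density-one theorems do not obviously hit a sparse, $\delta$-dependent sequence like $(c_n(\delta))_n$ uniformly in $\delta$.

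Your reduction is exactly the one the paper exploits for its partial result, and your diagnosis of the obstacle is exactly the right one. The paper sidesteps the uniformity problem by specialising to $\delta = n^2-1$ squarefree, so that $(c,b)=(n,1)$ is an evident Pell solution and the relevant denominator is simply $n$; the Bourgain--Kontorovich density-one statement then applies directly and, via Theorem~\ref{thm_fcfp}, yields the bound $51$, but only for that (positive-density) family of $\delta$. So Corollary~\ref{thm_ibk} is precisely the fragment of your plan that current technology can actually carry out. There is no error in your reasoning as far as it goes, but it is a reduction and a research programme, not a proof; the conjecture remains open in the paper as well, so there is no paper proof to compare against.
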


Nous obtenons une petite avancée dans la résolution de cette conjecture, comme corollaire du théorème~\ref{thm_fcfp}, en utilisant les travaux de Bourgain et Kontorovich sur la conjecture de Zaremba.
Voir corollaire~\ref{thm_ibk}. \\


\noindent Voici le plan de cet article :

Dans le chapitre~\ref{pos} qui suit, on rappelle quelques propri\'et\'es des fractions continues et du semi-groupe des matrices positives.
Dans le chapitre~\ref{preuve} on donne une preuve rapide du th\'eor\`eme~\ref{MN}.
Le chapitre~\ref{suites1} explique pourquoi les suites de fractions continues p\'eriodiques de la forme $[\overline{b_1,b_2, ..., b_k,(a_1, a_2, ... , a_l)^n}]$ ne conviennent pas.
Les chapitres~\ref{BACtA} et~\ref{suites2} sont consacr\'es aux g\'en\'eralisations et r\'eciproques des r\'esultats qui nous ont permis d'obtenir le th\'eor\`eme~\ref{MN}. Et enfin, le chapitre~\ref{exs} explicite les suites de fractions continues p\'eriodiques que permet d'obtenir la preuve du th\'eor\`eme~\ref{MN}, et donne des exemples. 
Nous finissons par le chapitre~\ref{s_zaremba}, dans lequel nous démontrons le théorème~\ref{thm_fcfp} et montrons que la conjecture de Zaremba~\ref{conj_zaremba}
implique la conjecture~\ref{conjM}. 

Ce travail a été principalement réalisé pendant mon stage de Master, à l'université Pierre et Marie Curie, à Paris, en 2009.
Je remercie Yves Benoist, qui était alors mon encadrant, pour l'aide précieuse qu'il m'a apporté.
Je remercie aussi Yann Bugeaud pour ses remarques qui m'ont aidées à trouver le théorème \ref{thm_fcfp}.

\section{Matrices positives} \label{pos}

Dans cette section, on s'intéresse au semi-groupe $\Gamma$ des matrices positives, qui est l'ensemble des matrices de $GL_2(\Z)$ qui correspondent à des développements en fraction continue périodique. On démontre des propriétés qui serviront par la suite.

\begin{define}
	Soit $M$ une matrice de $M_2(\R)$.
	On appelle \defi{discriminant} de $M$ le discriminant du polyn\^ome caract\'eristique de $M$.
	Autrement dit, c'est le r\'eel 
	\[ \discr(M) := (d-a)^2 + 4bc = \Tr(M)^2 - 4\Det(M), \]
	pour $M = \left( \begin{array}{cc} a & b \\ c & d \end{array} \right)$.
\end{define}

\begin{props}
	Soit $M$ une matrice de $GL_2(\Z).$
	\begin{enumerate}
		\item Les valeurs propres de $M$ sont dans le corps $\Q[\sqrt{\discr(M)}]$.
		\item Les vecteurs propre de $M$ peuvent \^etre choisis \`a coefficients dans le corps $\Q[\sqrt{\discr(M)}]$.
		\item Pour tout $n \geq 1$, $\discr(M)$ divise $\discr(M^n)$ et $\frac{\discr(M^n)}{\discr(M)}$ est un carr\'e parfait.
	\end{enumerate}
\end{props}

\begin{proof}
	\defi{1 .} Les valeurs propres sont les racines du polyn\^ome caract\'eristique, donc sont dans $\Q[\sqrt{\discr(M)}]$. \\
	\defi{2 .} Si l'on choisis un vecteur propre de la forme $\left( \begin{array}{c} 1 \\ x \end{array} \right)$, alors $x$ est racine du polyn\^ome $bx^2 + (a-d)x - c = 0$, pour $M = \left( \begin{array}{cc} a & b \\ c & d \end{array} \right)$. Or ce polyn\^ome a pour discriminant $\discr(M)$, et donc $x$ est dans $\Q[\sqrt{\discr(M)}]$. \\
	\defi{3 .} Si $\lambda$ et $\mu$ sont les deux valeurs propres de $M$, alors $\discr(M^n) = (\lambda^n - \mu^n)^2$. On a donc
	\[ \frac{\discr(M^n)}{\discr(M)} = (\frac{\lambda^n - \mu^n}{\lambda - \mu})^2 = (\sum_{i = 0}^{n-1}{\lambda^i\mu^{n-i-1}})^2. \]
	Or, l'expression $\sum_{i = 0}^{n-1}{\lambda^i\mu^{n-i-1}}$ est un polyn\^ome  \`a coefficients entiers, sym\'etrique en $\lambda$ et $\mu$. C'est donc aussi un polyn\^ome \`a coefficients entiers en la trace et le d\'eterminant de $M$. Donc le nombre complexe $\sum_{i = 0}^{n-1}{\lambda^i\mu^{n-i-1}}$ est en fait un entier.
\end{proof}

\subsection{Notations}

\begin{itemize}
	\item On appelle \defi{corps d'une matrice} $M \in GL_2(\Z)$ le corps $\Q[\sqrt{\discr(M)}]$. 
	\item On note $\m{i}$ la matrice $\left(
		\begin{array}{cc}
		0 &1\\
		1 &i
		\end{array}
		\right)$ o\`u $i$ est un entier,
		et on note $\m{(a_1, a_2, ... , a_n)}$ le produit $\m{a_1}\m{a_2} ... \m{a_n}$ pour un $n$-uplet d'entiers $(a_1, a_2, ..., a_n)$. Cette notation est justifiée par la remarque \ref{rem_mat-fc}.
	\item On note $\Gamma$ l'ensemble des produits des matrices $\m{i}$ pour $i \geq 1$
		($\Gamma$ est donc le monoïde engendr\'e par les matrices $\m{i}$, et il contient $I_2$), et $\Gamma_n$ l'ensemble des produits de matrices $\m{i}$ pour $1 \leq i \leq n$.
		On dit qu'une matrice est \defi{positive} si c'est un élément du semi-groupe $\Gamma\backslash\{I_2\}$.
		En particulier, une matrice positive a tous ses coefficients positifs ou nuls.

	\item Etant donn\'ee une matrice $P \in M_2(\R)$, on note $P^{\dagger}$ l'unique matrice v\'erifiant $P + P^{\dagger} = \Tr(P)I_2$. Si $P = \left(
		\begin{array}{cc}
		a &b\\
		c &d
		\end{array}
		\right)$, on a donc $P^{\dagger} = \left(
		\begin{array}{cc}
		d &-b\\
		-c &a
		\end{array}
		\right)$. Et si $P$ est inversible, on peut \'ecrire $P^{\dagger} = \Det(P)P^{-1}$.
	
		Les relations $P^{\dagger}Q^{\dagger} = (QP)^{\dagger}$ et $PP^{\dagger} = \Det(P)I_2$ permettent d'obtenir l'\'egalit\'e bien utile :
		\begin{align} \label{fm}
			\Det(P+Q) = \Det(P) + \Det(Q) + \Tr(PQ^{\dagger})
		\end{align}
	\item On appelle \defi{longueur} d'une fraction continue p\'eriodique la plus petite p\'eriode.
\end{itemize}

\begin{prop}[Crit\`ere de positivit\'e] \label{carGamma} 
	Soit $M = \left(
	\begin{array}{cc}
	a &b\\
	c &d
	\end{array}
	\right) \in GL_2(\Z)$.\\
	Alors les propri\'et\'es suivantes sont \'equivalentes:
	\begin{enumerate}
		\item $M \in \Gamma \backslash \{ I_2 \}$ (i.e. $M$ est positive) \label{gpos}
		\item$0 \leq a \leq b \leq d$ et $a \leq c \leq d$ \label{ppt-pos}
		\item Il existe un r\'eel quadratique $x_M > 1$, dont le conjugu\'e $\overline{x_M}$ est dans $]-1,0[$, tel que $\left( \begin{array}{c} 1 \\ x_M \end{array} \right)$ et $\left( \begin{array}{c} 1 \\ \overline{x_M} \end{array} \right)$ soient des vecteurs propres de $M$, et les entiers $b$ et $d$ sont strictement positifs. \label{vpos}
		\item $\abs{b-c} < d-a$ et les entiers $a, b, c$ et $d$ sont positifs ou nuls. \label{ipos}
	\end{enumerate}
\end{prop}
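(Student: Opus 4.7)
The plan is to establish the cycle $(1) \Rightarrow (2) \Rightarrow (4) \Rightarrow (2) \Rightarrow (1)$, which yields $(1) \Leftrightarrow (2) \Leftrightarrow (4)$, and to handle $(1) \Leftrightarrow (3)$ separately through the theorem of Galois on purely periodic continued fractions. For $(1) \Rightarrow (2)$, I induct on the length $n$ of a factorization $M = \m{a_1}\m{a_2}\cdots\m{a_n}$ with each $a_i \geq 1$. The base case $M = \m{i}$ is immediate. For the induction step, the product $M\m{i}$ has entries $(b,\, a+ib,\, d,\, c+id)$, and all the inequalities of (2) for this new matrix follow from those for $M$ together with $i \geq 1$. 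For $(2) \Rightarrow (4)$, (2) directly gives $\abs{b-c} \leq d-a$; if equality held, say $c - b = d - a$, a short computation yields $\Det(M) = (a-b)(b+d)$, which combined with $\Det(M) = \pm 1$ and (2) forces $b + d = 1$, a configuration then incompatible with (2).

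For $(4) \Rightarrow (2)$, the key step is to show $a \leq b$; the remaining inequalities $a \leq c$, $b \leq d$, $c \leq d$ follow by analogous arguments (the first by applying the same reasoning to $M^T$). Suppose for contradiction $a > b$, so $a \geq 1$ and $a - b \geq 1$. From $c - b < d - a$ I obtain $d \geq a + c - b + 1$, whence
\[ \Det(M) = ad - bc \geq a(a + c - b + 1) - bc = (a-b)(a+c) + a \geq 2, \]
contradicting $\Det(M) = \pm 1$. The case $b > d$ is handled by the symmetric estimate $\Det(M) \leq -(a + 2b) \leq -4$, obtained using $b - c < d - a$ (which yields $c \geq b + a - d + 1$) together with $b \geq d+1 \geq 2$.

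For $(2) \Rightarrow (1)$, I induct on the entry sum $a+b+c+d$. The base case is $a = 0$: $\Det(M) = -bc = \pm 1$ then forces $b = c = 1$, so $M = \m{d} \in \Gamma$. For $a \geq 1$, I peel off a factor $\m{i}$ on the right (or on the left, depending on the sign of $\Det(M)$), choosing $i$ based on the integer part of $b/a$ or $d/c$, so that the reduced matrix $M\m{i}^{-1}$ (resp.\ $\m{i}^{-1}M$) still satisfies (2) and has strictly smaller entry sum, thereby closing the induction. The main obstacle, and the point requiring the most care, is to check that the chosen $i$ simultaneously respects all five inequalities defining (2) for the reduced matrix; this uses the full strength of (2) for $M$ together with $\Det(M) = \pm 1$.

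Finally, for $(1) \Leftrightarrow (3)$, the action of $\m{i}$ on the slope $x$ of a vector $(1,x)^T$ is $x \mapsto i + 1/x$, so the fixed slope of $M = \m{a_1}\cdots\m{a_n}$ is the purely periodic continued fraction $x_M = [\overline{a_1, a_2, \ldots, a_n}]$. By the theorem of Galois, $x_M > 1$ and $-1 < \overline{x_M} < 0$, and the positivity of $b$ and $d$ follows from the same induction as in $(1) \Rightarrow (2)$. This proves $(1) \Rightarrow (3)$. Conversely, given $x_M$ as in (3), Galois provides an expansion $x_M = [\overline{a_1, \ldots, a_n}]$ with $a_i \geq 1$; setting $N = \m{a_1}\cdots\m{a_n}$, the matrices $M$ and $N$ share both eigenvectors, hence commute and are simultaneously diagonalizable over $\Q[\sqrt{\discr(M)}]$. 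The subgroup of $GL_2(\Z)$ fixing these two eigenlines is virtually cyclic (generated by $\pm I_2$ and a fundamental unit of the corresponding quadratic order), so $M = \pm N^k$ for some $k \in \Z$; the conditions $b, d > 0$ together with the attractivity of $(1, x_M)$ for $M$ then force the sign $+$ and $k \geq 1$, whence $M \in \Gamma$.
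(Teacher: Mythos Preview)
Your treatment of $(1)\Leftrightarrow(2)\Leftrightarrow(4)$ is correct and close to the paper's, with only cosmetic differences (the paper handles the base cases $a=0,1$ explicitly rather than splitting on the sign of $\Det(M)$, and dispatches $(2)\Rightarrow(4)$ in one word). Your $(1)\Rightarrow(3)$ via Galois is also fine, and is a legitimate alternative to the paper's route, which instead links $(3)$ directly to $(4)$ by studying the quadratic $Q(X)=bX^2+(a-d)X-c$ whose roots are $x_M,\overline{x_M}$: the sign conditions $Q(1)<0$, $Q(0)<0$, $Q(-1)>0$ translate the inequalities on $x_M,\overline{x_M}$ into $|b-c|<d-a$ and $c>0$ in two lines.

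The genuine gap is in your $(3)\Rightarrow(1)$. From the fact that the stabiliser of the two eigenlines is $\{\pm U^k:k\in\Z\}$ you get $M=\pm U^{j}$ and $N=\pm U^{\ell}$ for some integers $j,\ell$, but \emph{not} $M=\pm N^{k}$: that would require $\ell\mid j$, which is not automatic. It would follow if $N$ were itself a generator $U$, i.e.\ if the minimal-period matrix of the continued fraction of $x_M$ always realises a fundamental unit of the relevant order; this is true, but it is a nontrivial statement you neither invoke nor prove (and you do not even stipulate that the period be minimal). A second, smaller issue: ``attractivity of $(1,x_M)$ for $M$'' is not part of the hypothesis $(3)$; it must be deduced (from $b>0$ one does get that the eigenvalue on $(1,x_M)$ exceeds the other, via $\lambda-\mu=b(x_M-\overline{x_M})>0$), and then the final forcing of the sign and of $k\geq1$ still needs a careful case check you do not carry out.

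The cleanest repair is simply to abandon the detour through the unit group and prove $(3)\Rightarrow(4)$ directly as the paper does: from $b>0$ and the three sign conditions on $Q$ you get $|b-c|<d-a$ and $c>0$; then $bc>0$ gives $ad=bc\pm1\geq0$, and $d>0$ yields $a\geq0$. This is elementary, avoids any appeal to the structure of unit groups, and closes the circle $(3)\Rightarrow(4)\Rightarrow(1)$ using what you have already established.
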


\begin{proof}

	\underline{$\ref{gpos} \imp \ref{ppt-pos}$}
	R\'ecurrence facile.
	
	\underline{$\ref{ppt-pos} \imp \ref{gpos}$}
	Supposons que $M$ v\'erifie ces in\'egalit\'es.
	D\'emontrons que $M$ est positive par r\'ecurrence sur ses coefficients.
	
	Si $a = 0$, alors $bc = -\Det(M) = \pm 1$, donc $b = c = 1$ et $M = \begin{pmatrix} 0 & 1 \\ 1 & d \end{pmatrix} = \m{d}$.
	
	Si $a = 1$, alors $M$ est de la forme
	$$\left.	\begin{array}{l}
				M = \left( \begin{array}{cc} 1 & b \\ c & bc+1 \end{array} \right) = \m{c}\m{b}$ quand $\Det(M) = 1, \\
				M = \left( \begin{array}{cc} 1 & b \\ c & bc-1 \end{array} \right) = \m{c-1}\m{1}\m{b-1}$ quand $\Det(M) = -1.
			\end{array} \right.$$
	
	Si $a \geq 2$, alors on a $\floor{\cfrac{b}{a}} = \floor{\cfrac{d}{c}}$ (cela d\'ecoule de l'\'egalit\'e $ad-bc = \pm 1$).
	Et en posant $q = \floor{\cfrac{b}{a}} = \floor{\cfrac{d}{c}}$, la matrice $M \m{q}^{-1} = \left(
	\begin{array}{cc}
	b-qa &a\\
	d-qc &c
	\end{array}
	\right)$ v\'erifie encore le point~\ref{ppt-pos} de la proposition : l'in\'egalit\'e $b - qa \leq d-qc$ r\'esulte de $\Det(M \cdot \m{q}^{-1}) = \pm 1$ et de $2 \leq a \leq c$, et les autres in\'egalit\'es sont claires.
	De plus, la matrice $M \m{q}^{-1}$ est strictement plus petite que $M$, coefficients \`a coefficients. 
	
	\underline{$\ref{ppt-pos} \imp \ref{ipos}$}
	Clair.
	
	\underline{$\ref{vpos} \eq \ref{ipos}$}
	Les r\'eels quadratiques $x_M$ et $\overline{x_M}$ sont les racines du polyn\^ome $Q(X) = bX^2+(a-d)X-c$.
	Si $b$ est positif, on a donc,
	\begin{eqnarray} \label{equiv34}
		 \left\{ \begin{array}{c} x_M > 1 \\  -1 < \overline{x_M} < 0 \end{array} \right. \longeq \left\{ \begin{array}{c} Q(1) < 0 \\  Q(-1) > 0 \\ Q(0) < 0 \end{array} \right. \longeq \left\{ \begin{array}{c} \abs{b-c} < d-a \\  c > 0 \end{array} \right.
	\end{eqnarray}
	Et l'\'egalit\'e $bc = 0$ est impossible si $\abs{b-c} < d-a$, puisque si elle avait lieu, alors l'\'egalit\'e $ad - bc = \pm 1$ et la positivit\'e de $a$ et de $d$ entra\^ineraient que $a=d = 1$. 
	
	\underline{$\ref{ipos} \imp \ref{vpos}$}
	On a $b > 0$ et $c > 0$, parce que $bc$ est non nul.
	On peut donc utiliser l'\'equivalence (\ref{equiv34}) ci-dessus pour obtenir $x_M > 1$ et $ -1 < \overline{x_M} < 0$.
	On a ensuite $\abs{b-c} < d-a$ qui entra\^ine $d > 0$.
	
	\underline{$\ref{vpos} \imp \ref{ipos}$}
	L'\'equivalence (\ref{equiv34}) ci-dessus nous donne $\abs{b-c} < d-a$ et  $c > 0$.
	L'\'egalit\'e $ad-bc = \pm 1$ et l'in\'egalit\'e $bc > 0$ donnent alors que $ad \geq 0$, d'o\`u $a \geq 0$.
	
	\underline{$\ref{ipos} \imp \ref{ppt-pos}$}
	Si l'on avait $c < a$, on aurait $cd <  ad = bc \pm 1,$
	donc $cd \leq bc$ puis $d \leq b$.
	Mais la différence $d-a < b - c$ des inégalités contredirai alors l'hypothèse $\abs{b - c} < d-a$.
	Les autres inégalités $a \leq b$, $c \leq d$ et $b \leq d$ s'obtiennent de la même façon.
\end{proof}

\begin{rem} \label{rem_mat-fc}
	Le r\'eel $x_M$ de la proposition ci-dessus qui correspond \`a une matrice
	\[ M = \m{(a_1, a_2, a_3, ..., a_p)} = \m{a_1}\m{a_2}\m{a_3} ... \m{a_p}, \]
	a pour d\'eveloppement en fraction continue
	\[ x_M = [ \overline{ a_1, a_2, a_3, ... , a_p } ]. \]
	La correspondance est bijective si l'entier $p$ est la longueur.
\end{rem}

\begin{define} \label{def-triv}
	Une suite $(A_n)$ de matrices est non triviale si l'ensemble des r\'eels $x_{A_n}$ est infini (ce qui signifie simplement que la suite de matrices correspond \`a une infinit\'e de fractions continues p\'eriodiques).
\end{define}

\begin{rem}
	Il y a unicit\'e de l'\'ecriture d'une matrice de $\Gamma$ comme produit de matrices $\m{i}$. En particulier, une matrice de $\Gamma$ est sym\'etrique si et seulement si son \'ecriture comme produit de matrices $\m{i}$ est sym\'etrique.
	
	Les matrices positives sont toujours diagonalisables et ont des valeurs propres r\'eelles distinctes.
\end{rem}

Pour démontrer l'unicité de la décomposition $M = \m{i_1} \m{i_2} ... \m{i_n}$ d'une matrice positive 
$
	M =	\begin{pmatrix}
			a & b \\
			c & d
		\end{pmatrix}
$,
il suffit de remarquer que la dernière matrice $\m{i_n} = \begin{pmatrix} 0 & 1 \\ 1 & i_n \end{pmatrix}$ s'obtient par la formule 
\[
	i_n = \min(\floor{\frac{b}{a}}, \floor{\frac{d}{c}}),
\]
avec la convention $\floor{\frac{b}{0}} = \infty$. Voir la preuve de la proposition \ref{carGamma} pour plus de détails. \\

On dispose \'egalement d'un crit\`ere de positivit\'e pour les matrices de rang 1 :

\begin{prop} \label{H->Gamma}
	Soit $H = \left(
	\begin{array}{cc}
	a &b\\
	c &d
	\end{array}
	\right) \in M_2(\Z)$ une matrice de rang 1 v\'erifiant les in\'egalit\'es $0 \leq a \leq b \leq d$ et $a \leq c \leq d$.
	Alors il existe deux matrices $P$ et $Q$ dans $\Gamma$ et un entier $e \geq 1$ tels que $H = P \left(
	\begin{array}{cc}
	0 &0\\
	0 &e
	\end{array}
	\right) Q$.
	De plus, $e$ est unique, et $P$ et $Q$ sont uniques modulo les relations
	$$ \left. \begin{array}{c}
		X \m{n}\m{1} \left(
		\begin{array}{cc}
			0 &0\\
			0 &e
		\end{array}
		\right)Y = X \m{n+1}  \left(
		\begin{array}{cc}
			0 &0\\
			0 &e
		\end{array}
		\right) Y \\
		X \left(
		\begin{array}{cc}
			0 &0\\
			0 &e
		\end{array}
		\right) \m{1}\m{n} Y = X \left(
		\begin{array}{cc}
			0 &0\\
			0 &e
		\end{array}
		\right) \m{n+1} Y
	\end{array} \right.$$
	pour $X, Y \in \Gamma$.
\end{prop}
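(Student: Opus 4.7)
The plan is to exploit the rank-$1$ structure of $H$. Since $H$ has rank $1$, I would first write $H = \vec{u}\,\vec{v}^{T}$ for some integer column $\vec{u} = (u_{1}, u_{2})^{T}$ and integer row $\vec{v}^{T} = (v_{1}, v_{2})$. Inspecting each entry $u_{i}v_{j}$ of $H$ under the hypotheses $0 \le a \le b \le d$ and $a \le c \le d$, and using that $\vec{u}, \vec{v}$ are nonzero, I would show (after absorbing an overall sign in $\vec{u}$) that $0 \le u_{1} \le u_{2}$, $0 \le v_{1} \le v_{2}$, $u_{2} > 0$ and $v_{2} > 0$. Setting $\alpha := \pgcd(u_{1}, u_{2})$, $\beta := \pgcd(v_{1}, v_{2})$, $\vec{u}' := \vec{u}/\alpha$ and $\vec{v}' := \vec{v}/\beta$, one then has $H = \alpha\beta\, \vec{u}'\, (\vec{v}')^{T}$ and $\pgcd(a,b,c,d) = \alpha\beta$, which forces $e := \alpha\beta$ to be unique.

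To build $P$, I would expand the rational $u_{2}'/u_{1}'$ (with $P := I_{2}$ in the degenerate case $(u_{1}', u_{2}') = (0,1)$) as a continued fraction $[a_{1},\ldots,a_{k}]$ and set $P := \m{a_{1}} \cdots \m{a_{k}}$; a short induction on $k$, using the convergent recursion, shows that the second column of $P$ equals $\vec{u}'$. Symmetrically one constructs $Q \in \Gamma$ whose second row equals $(\vec{v}')^{T}$. A direct computation --- using that $P\bigl(\begin{smallmatrix} 0 & 0 \\ 0 & e \end{smallmatrix}\bigr)Q$ equals $e$ times the outer product of the second column of $P$ with the second row of $Q$ --- then gives
\[
	P \left( \begin{array}{cc} 0 & 0 \\ 0 & e \end{array} \right) Q \;=\; e\, \vec{u}'\, (\vec{v}')^{T} \;=\; H,
\]
which establishes existence.

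For the uniqueness part, I would argue that in any decomposition $H = P \bigl(\begin{smallmatrix} 0 & 0 \\ 0 & e \end{smallmatrix}\bigr) Q$ with $P, Q \in \Gamma$, the second column of $P$ and the second row of $Q$ are forced to equal $\vec{u}'$ and $(\vec{v}')^{T}$, hence are determined by $H$ alone. The remaining ambiguity in $P$ given its second column then reduces to the classical continued fraction identity
\[
	[a_{1},\ldots,a_{k-1}, a_{k}] \;=\; [a_{1},\ldots,a_{k-1}, a_{k}-1,\, 1] \qquad (a_{k} \ge 2);
\]
after right-multiplication by $\bigl(\begin{smallmatrix} 0 & 0 \\ 0 & e \end{smallmatrix}\bigr)$ this is exactly the first announced relation $X\m{n}\m{1}DY = X\m{n+1}DY$, and the symmetric phenomenon on the second row of $Q$ yields the second. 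The main obstacle will be showing that these two moves account for \emph{all} identifications; I expect this to follow from the unique-factorization property of $\Gamma$ highlighted just after Proposition~\ref{carGamma}, applied to the part of $P$ (resp.\ $Q$) lying strictly to the left (resp.\ right) of the last (resp.\ first) factor adjacent to the central matrix.
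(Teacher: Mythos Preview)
Your proposal is correct and follows essentially the same line as the paper's proof: write $H$ as an outer product $\vec{u}\,\vec{v}^{T}$, identify $e$ as the $\pgcd$ of the entries, reduce to primitive vectors, and then find all $P\in\Gamma$ with prescribed second column (and symmetrically for $Q$). The only cosmetic difference is that the paper counts those $P$ directly via B\'ezout (for each sign of $\det P$ there is a unique $(u,v)$ with $0\le u<x$, $0\le v<y$), whereas you phrase the same count through the two continued-fraction expansions of $u_2'/u_1'$; the B\'ezout formulation handles your ``main obstacle'' in one stroke and avoids the slightly awkward appeal to unique factorization at the end.
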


\begin{proof}
	On a n\'ecessairement $e = \pgcd(a, b, c, d)$, d'o\`u son unicit\'e.
	On peut supposer $e = 1$ quitte \`a diviser $H$ par $e$.
	Comme $H$ est non inversible, il existe des entiers positifs ou nuls $x$, $y$, $z$ et $t$ tels que $H = \left(
	\begin{array}{c}
	x\\
	y
	\end{array}
	\right) \cdot \left(
	\begin{array}{cc}
	z& t\\
	\end{array}
	\right)$ et avec $x$ et $y$ premiers entre eux, et $z$ et $t$ premiers entre eux. \\
	La matrice $P$ est nécessairement de la forme $P = \left(
	\begin{array}{cc}
	u & x\\
	v & y
	\end{array}
	\right)$
	pour être dans $GL_2(\Z)$, puisque les coefficients $x$ et $y$ doivent être premiers entre eux.
	Déterminons toutes les valeurs possibles des entiers $u$ et $v$ pour que la matrice $P$ soit dans $\Gamma$.
	
	\begin{itemize}
		\item Si $x = 0$, alors on a n\'ecessairement $y = 1$, et donc $(u,v) = (1,0)$ convient et est la seule solution.
		\item Si $x = 1$, alors les solutions sont $(u,v) = (0,1)$ et $(u,v) = (1,y-1)$ (le couple $(u,v) = (1,y-1)$ est bien une solution \`a condition que $y \geq 2$).
		\item Si $x \geq 2$, alors les solutions $(u,v)$ v\'erifient n\'ecessairement les in\'egalit\'es $0 \leq u < x$ et $0 \leq v < y$. Le th\'eor\`eme de B\'ezout nous donne alors l'existence et l'unicit\'e d'une solution $(u,v)$ v\'erifiant les in\'egalit\'es $0 \leq u < x$ et $0 \leq v < y$ et l'\'equation $uy - vx = 1$, et de m\^eme pour l'\'equation $uy - vx = -1$. L'in\'egalit\'e $u \leq v$ est alors automatiquement bien v\'erifi\'ee. On a donc exactement deux solutions.
	\end{itemize}
	D'autre part, il est facile de v\'erifier que l'on a les égalités
	\[
		\m{n}\m{1} \left(
		\begin{array}{cc}
		0 &0\\
		0 &e
		\end{array}
		\right) = \m{n+1} \left(
		\begin{array}{cc}
		0 &0\\
		0 &e
		\end{array}
		\right)
			\text{ et que }
		\left(
		\begin{array}{cc}
		0 &0\\
		0 &e
		\end{array}
		\right) \m{1}\m{n} = \left(
		\begin{array}{cc}
		0 &0\\
		0 &e
		\end{array}
		\right) \m{n+1}
	\]
	d\`es que $n \geq 1$, d'o\`u le r\'esultat. Par transposition, on obtient également les matrices $Q$ qui conviennent.
\end{proof}


\section{Preuve du th\'eor\`eme~\ref{MN}} \label{preuve}

Dans cette partie, nous d\'emontrons le th\'eor\`eme~\ref{MN}.
Pour cela, nous commen\c{c}ons par le reformuler en termes de matrices :

\begin{thm} \label{MN2}
	Soit $A = MN$ avec $M$ et $N$ deux matrices sym\'etriques de $\Gamma_q$ telles que l'une d'elles est de d\'eterminant $-1$.
	Alors il existe des matrices $B$ et $C$ dans $\Gamma_{2q+1}$ telles que pour tout $n$, le corps de $BA^nC\transpA^n$ est le corps de $A$, et telles que la suite $BA^nC\transpA^n$ est non triviale.
\end{thm}

\begin{rem} \label{rem_MN2}
	Une matrice de la forme $MN$, avec $M, N \in \Gamma$ sym\'etriques et $\Det(M) = -1$ ou $\Det(N) = -1$, est toujours semblable (en faisant une permutation circulaire sur les entiers strictement positifs $m_1, ... , m_n$ qui apparaissent dans la d\'ecomposition $MN = \m{m_1} ... \m{m_n}$) \`a une matrice de la forme $\m{k}M'$, $k \geq 1$, $M' \in \Gamma$ sym\'etrique.
	L'\'enonc\'e du th\'eor\`eme~\ref{MN2} n'est donc pas plus g\'en\'eral que celui du th\'eor\`eme~\ref{MN}.
\end{rem}


Toute la suite du chapitre est consacr\'ee \`a la preuve du théorème~\ref{MN2}, qui est équivalent au théorème~\ref{MN}.

\begin{proof}[Preuve du théorème~\ref{MN2}]
	Soient $M$ et $N$ deux matrices de $\Gamma$, avec $\Det(M) = -1$ (on peut toujours se ramener \`a ce cas, quitte \`a tout transposer).
	Pour tout entier $k$, posons
	$$H_k := H_k(M,N) = MS_0+(MN)^{2k}, $$
	o\`u $S_0 = \left(
	\begin{array}{cc}
	0 &-1\\
	1 &0
	\end{array}
	\right)$.
	
	
	\begin{lemme} \label{tr-sym}
		Pour toute matrice sym\'etrique $P \in M_2(\R)$, on a $\Tr(S_0P) = 0$.
	\end{lemme}
	
	\begin{proof}
		On a $\Tr(S_0P) = \Tr(\transp{(S_0P)}) = \Tr(-PS_0) = -\Tr(S_0P)$.
	\end{proof}
	
	\begin{lemme} \label{Hk1}
		Pour tout entier $k$, $H_k$ est de rang $1$.
	\end{lemme}
	
	\begin{proof}
		D'après la formule (\ref{fm}) (page \pageref{fm}), on a
		\[ \Det(H_k) = \Det(MS_0) + \Det((MN)^{2k}) + \Tr(MS_0((MN)^{2k})^{\dagger}), \]
		et ici on a $((MN)^{2k})^{\dagger} = (MN)^{-2k}$ puisque $\det(MN) = \pm1$. \\
		Or on a d'une part $\Tr(MS_0(MN)^{-2k}) = \Tr(S_0(MN)^{-2k}M) = 0$ puisque $(MN)^{-2k}M$ est sym\'etrique,
		et d'autre part $\Det(MS_0) = -1$ puisque $\Det(M) = -1$. Et comme on a de plus $\Det((MN)^{2k}) = 1$, le d\'eterminant de la matrice $H_k$ est finalement nul. La non nullit\'e de $H_k$ est claire, puisque la matrice $-S_0$ n'est ni dans $\Gamma$ ni dans $\Gamma^{-1}$.
	\end{proof}
	
	\begin{lemme} \label{Sdet}
		Pour toute matrice $C \in M_2(\R)$, on a $CS_0\transp{C} = \Det(C)S_0$.
	\end{lemme}
	
	\begin{proof}
		V\'erification facile.
	\end{proof}
	
	\begin{lemme} \label{Hk3}
		Pour k assez grand, la matrice $H_k$ s'\'ecrit $$H_k = F \m{i} \left(
		\begin{array}{cc}
		0 &0\\
		0 &e
		\end{array}
		\right) \m{j} G$$ o\`u $i$ et $j$ sont des entiers sup\'erieurs ou \'egaux \`a 2, $F$ et $G$ sont des matrices de $\Gamma$ et $e$ est un entier sup\'erieur ou \'egal \`a $1$.
	\end{lemme}
	
	\begin{proof}
		Quitte \`a prendre $k$ sup\'erieur ou \'egal \`a 3, la matrice $N(MN)^{2k-1}$ a tous ses coefficients sup\'erieurs ou \'egaux \`a $3$ (le quatrième nombre de Fibonacci) qui est strictement sup\'erieur \`a $1$. Et donc si l'on pose $\left(
		\begin{array}{cc}
		a &b\\
		c &d
		\end{array}
		\right) = N(MN)^{2k-1}$, alors on a les in\'egalit\'es strictes $0 < a < b < d$ et $a < c < d$.
		Les m\^emes in\'egalit\'es mais larges sont alors satisfaites pour la matrice $S_0 + N(MN)^{2k-1}$, et elles le sont donc encore pour la matrice $H_k = M(S_0 + N(MN)^{2k-1})$ \\
		(i.e. si $\left(
		\begin{array}{cc}
		a' &b'\\
		c' &d'
		\end{array}
		\right) = H_k$, alors on a $0 \leq a' \leq b' \leq d'$ et $a' \leq c' \leq d'$). \\
		D'apr\`es la proposition~\ref{H->Gamma}, la matrice $H_k$ s'\'ecrit alors $H_k = P \left(
		\begin{array}{cc}
		0 &0\\
		0 &e
		\end{array}
		\right) Q$
		pour des matrices $P, Q$ dans $\Gamma$ et un entier $e \geq 1$.
		On a ensuite $$H_{k+2} = M(S_0 + N(MN)^{2k+3}) = MNMNM(S_0 + N(MN)^{2k-1})MNMN$$
		ce qui donne $H_{k+2} = MNMNH_kMNMN$.
		Les relations donn\'ees dans la proposition~\ref{H->Gamma} permettent alors d'obtenir $H_{k+2}$ de la forme annonc\'ee (c'est-\`a-dire avec $i \geq 2$ et $j \geq 2$), puisque $MNMN \in \Gamma\backslash \{ I_2, \m{1} \}$.
	\end{proof}
	
	On fixe $k$ assez grand et $F,G, i, j$ et $e$ pour avoir $H_k = F \m{i} \left(
	\begin{array}{cc}
	0 &0\\
	0 &e
	\end{array}
	\right) \m{j} G$ comme dans le lemme~\ref{Hk3}.
	Introduisons les matrices suivantes
		$$B = \left\{
	          \begin{array}{ll}
	            \transp{G}\m{(j-1, 1, e-1, j)}G & \quad \mathrm{si}\quad \Det(G) = 1 \\
	            \transp{G}\m{(j, e-1, 1, j-1)}G & \quad \mathrm{si}\quad \Det(G) = -1 \\
	          \end{array}
	        \right.$$
	        $$C = \left\{
	          \begin{array}{ll}
	            F\m{(i-1, 1, e-1, i)}\transp{F} & \quad \mathrm{si}\quad \Det(F) = 1 \\
	            F\m{(i, e-1, 1, i-1)}\transp{F} & \quad \mathrm{si}\quad \Det(F) = -1 \\
	          \end{array}
	        \right.$$
	 
	\begin{lemme} 
		\text{ } 
		\begin{enumerate}
			\item Les matrices $B$ et $C$ sont dans $\Gamma$.
			\item Chaque matrice $B$ et $C$ s'\'ecrit sous la forme $N - S_0$, o\`u $N$ est une matrice sym\'etrique de rang 1.
		\end{enumerate}
	\end{lemme} 
	
	\begin{proof}
		\begin{enumerate}
			\item Si $e > 1$, les matrices $B$ et $C$ sont clairement dans $\Gamma$, et si $e = 1$, l'\'egalit\'e $\m{(a, 0, b)} = \m{a+b}$, pour des entiers $a$ et $b$, montre que $B$ et $C$ sont encore dans $\Gamma$.
		
			\item Pour $k \geq 1$, la matrice $\m{(k-1, 1, e-1, k)} = \left(
			\begin{array}{cc}
			e &ek+1\\
			ek-1 &ek^2
			\end{array}
			\right)$ s'\'ecrit sous la forme $N - S_0$ avec $N$ sym\'etrique de rang 1.
			On a donc, pour une matrice $K$ de $\Gamma$, $\transp{K}\m{(k-1, 1, e-1, k)}K = \transp{K}NK - \Det(K)S_0$ d'apr\`es le lemme \ref{Sdet}, et la matrice $\transp{K}NK$ est encore sym\'etrique de rang 1. On obtient donc bien le r\'esultat.
		\end{enumerate}
	\end{proof}
	
	\begin{lemme} \label{ltrH}
		Pour toute matrice $A \in M_2(\R)$, on a les deux \'egalit\'es
		\[ \Tr(BAC\transpA) = (\Tr(H_kA))^2 - 2\Det(A), \]
		\[ \discr(BAC\transpA) = (\Tr(H_kA))^2((\Tr(H_kA))^2 - 4\Det(A)). \]
	\end{lemme}
	
	\begin{proof}
		On peut \'ecrire $B$ et $C$ sous la forme : $B = b_0\transp{b_0} - S_0$ et $C = c_0\transp{c_0} - S_0$ pour des vecteurs $b_0, c_0 \in M_{2,1}(\R)$ d\'efinis par $b_0 = \transp{G}\left(
			\begin{array}{ll}
				\sqrt{e} \\
				j\sqrt{e}
			\end{array}
			\right)$ et $c_0 = F\left(
			\begin{array}{ll}
				\sqrt{e} \\
				i\sqrt{e}
			\end{array}
			\right)$.
		On a alors $\Tr(BAC\transpA) = \Tr(b_0\transp{b_0}Ac_0\transp{c_0}\transpA) + \Tr(S_0AS_0\transpA)$ (les deux termes $\Tr(S_0Ac_0\transp{c_0}\transpA)$ et $\Tr(b_0\transp{b_0}AS_0\transpA)$ sont nuls puisque les matrices $Ac_0\transp{c_0}\transpA$ et $\transpA b_0\transp{b_0}A$ sont sym\'etriques).
		On a ensuite \\ $\Tr(b_0\transp{b_0}Ac_0\transp{c_0}\transpA) = (\transp{b_0}Ac_0)^2 = \Tr(c_0\transp{b_0}A)^2$, et $\Tr(S_0AS_0\transpA) = -2\Det(A)$.
		On v\'erifie que l'on a $c_0\transp{b_0} = H_k$, et on a alors obtenue la premi\`ere \'egalit\'e.
		
		La deuxi\`eme \'egalit\'e s'en d\'eduit alors facilement :
		\[
			\begin{aligned}
				\discr(BAC\transpA) &= \Tr^2(BAC\transpA) - 4\Det(BAC\transpA) \\
									&= \Tr^4(H_kA) - 4\Tr^2(H_kA)\Det(A) + (4 - 4) \Det^2(A) \\ 
									&=\Tr^2(H_kA)( \Tr^2(H_kA) - 4\Det(A) ) \\
			\end{aligned}
		\]
	\end{proof}
	
	\begin{lemme} \label{Hk2}
		Pour tous entiers $n$ et $k$, on a $\Tr(H_k(MN)^n) = \Tr((MN)^{n+2k})$.
	\end{lemme}
	
	\begin{proof}
		On a $\Tr(H_k(MN)^n) = \Tr(MS_0(MN)^n) + \Tr((MN)^{n+2k})$.
		Or, $\Tr(MS_0(MN)^n) = \Tr(S_0(MN)^nM) = 0$, parce que $(MN)^nM$ est sym\'etrique.
	\end{proof}
	
	Terminons maintenant la preuve du th\'eor\`eme~\ref{MN2}. On a
	\[
		\begin{aligned}
			\discr(B(MN)^nC(NM)^n) &= \Tr^2(H_k(MN)^n)(\Tr^2(H_k(MN)^n) - 4\Det(MN)^n) \\
								&=\Tr^2((MN)^{n+2k})( \Tr^2((MN)^{n+2k}) - 4\Det(MN)^{n}) \\ 
								&=\Tr^2((MN)^{n+2k})\discr((MN)^{n+2k}) \\
		\end{aligned}
	\]
	On v\'erifie que les matrices $B$ et $C$ sont bien dans $\Gamma_{2q+1}$ (voir sections~\ref{suites2} et \ref{exs} pour plus de d\'etails),
	et en explicitant les suites obtenues, on vérifie qu'elles sont non triviales (voir section~\ref{exs}).
	Ceci termine la preuve du th\'eor\`eme~\ref{MN2}.
\end{proof}

\section{Fractions continues de la forme $[ \overline{ \pat{B}\pat{A}^n } ]$} \label{suites1}

Dans cette partie, nous expliquons pourquoi la construction pr\'ec\'edente ne pouvait pas aboutir avec des suites de fractions continues de la forme $$ [ \overline{ b_1, b_2, ... , b_k (a_1, a_2, ..., a_l)^n } ]. $$
Nous montrons qu'il n'existe pas de suite non constante de fractions continues p\'eriodiques de la forme $ [ \overline{ \pat{B}\pat{A}^n } ] $ dans un corps quadratique donn\'e,
ce qui se formule matriciellement de la fa\c{c}on suivante :

\begin{prop} \label{prop-suites1}
	Soit $\Q[\sqrt{\delta}]$ un corps quadratique r\'eel, et soient $A$ et $B$ des matrices de $\Gamma$.
	Si pour tout $n$, le corps de $BA^n$ est $\Q[\sqrt{\delta}]$, alors la suite $BA^n$ est triviale (au sens de la définition \ref{def-triv}).
\end{prop}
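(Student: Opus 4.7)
The plan is to study the larger eigenvalue $\lambda_n$ of $BA^n$: the hypothesis forces it to be a unit of $\mathcal{O}_K$, where $K := \Q[\sqrt{\delta}]$, and I will show this forces $B$ and $A$ to commute --- in which case the sequence is obviously trivial, since $x_{BA^n} = x_A$ for all $n$. Concretely, $\lambda_n$ is a root of the monic integer polynomial $X^2 - \Tr(BA^n)X + \Det(BA^n)$, with $\Det(BA^n) = \pm 1$, so $\lambda_n \in \mathcal{O}_K^\times$; since $\lambda_n > 1$, I would write $\lambda_n = u^{k_n}$, where $u > 1$ is the fundamental unit of $K$ and $k_n \in \Z_{\geq 1}$.

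Next I would diagonalize $A = P\, \mathrm{diag}(\lambda, \mu)\, P^{-1}$ over $K_A := \Q[\sqrt{\discr(A)}]$, with $\lambda > 1 > |\mu|$ and $\lambda\mu = \epsilon := \Det(A) \in \{\pm 1\}$. The Galois action in $K_A$ swaps the two eigenspaces of $A$, which forces $P^{-1}BP$ to take the shape $\left( \begin{smallmatrix} \alpha & \beta \\ \bar\beta & \bar\alpha \end{smallmatrix} \right)$ for some $\alpha, \beta \in K_A$. A quick trace computation then gives $\lambda_n \sim \alpha\lambda^n$, and from $\Tr(BA^n) > 0$ one reads off $\alpha > 0$ (the degenerate case $\alpha = 0$ being excluded because $B \in \Gamma$ makes $\Tr(B) = \alpha + \bar\alpha > 0$).

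The heart of the argument is the ratio $\lambda_{n+1}/\lambda_n = u^{k_{n+1}-k_n}$, which lies in $u^{\Z}$ for every $n$ while converging to $\lambda$. Since $u^{\Z}$ is closed and locally discrete in $\R_{>0}$ (its only accumulation points being $0$ and $+\infty$), the sequence $u^{k_{n+1}-k_n}$ must stabilize: $\lambda = u^J$ for some $J \geq 1$, and $k_n = Jn + C$ for some constant $C$ and all $n$ large enough. In particular $\lambda \in K$, so $K_A = \Q(\lambda) = K$, and $\lambda_n = \zeta \lambda^n$ with $\zeta := u^C$.

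Finally, plugging $\lambda_n = \zeta\lambda^n$ and $T_n = \alpha\lambda^n + \bar\alpha\mu^n$ into the characteristic equation $\lambda_n^2 - T_n\lambda_n + \Det(B)\epsilon^n = 0$, and using $\lambda\mu = \epsilon$, should give $(\zeta^2 - \zeta\alpha)\lambda^{2n} + (\Det(B) - \zeta\bar\alpha)\epsilon^n = 0$. As $|\lambda| > 1$ and $|\epsilon| = 1$, the functions $\lambda^{2n}$ and $\epsilon^n$ of $n$ are linearly independent, so both coefficients vanish: $\zeta = \alpha$ and $\Det(B) = \alpha\bar\alpha$. Combined with $\Det(B) = \alpha\bar\alpha - \beta\bar\beta$ (from the block form of $P^{-1}BP$), this yields $\beta\bar\beta = 0$, hence $\beta = 0$ (as $K_A$ is a real quadratic field). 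So $P^{-1}BP$ is diagonal, $B$ commutes with $A$, and $x_{BA^n} = x_A$ for every $n$. The main obstacle will be the ``convergence in a discrete multiplicative set'' step, which requires keeping careful track of the distinction between the fields $K$ and $K_A$ before identifying them.
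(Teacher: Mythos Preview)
Your argument is correct and follows essentially the same path as the paper's, though with a cleaner packaging. The paper works with traces rather than eigenvalues: it invokes a Pell--Fermat lemma to write $\Tr(BA^n)=\Tr(U^{i_n})$ for some fixed $U\in GL_2(\Z)$ with field $\Q[\sqrt\delta]$, then uses a logarithmic growth estimate to show the $i_n$ form an arithmetic progression, and finally compares trace and determinant in the diagonalizing basis of $A$ to conclude that $B$ is diagonal there (the Galois argument appearing only at the very end). Your version replaces the trace--$U$ machinery by the observation that the dominant eigenvalue $\lambda_n$ itself lies in $u^{\Z}$, and replaces the logarithm by the convergence of $\lambda_{n+1}/\lambda_n$ inside the discrete set $u^{\Z}$; bringing the Galois symmetry in from the start to get the $\left(\begin{smallmatrix}\alpha&\beta\\\bar\beta&\bar\alpha\end{smallmatrix}\right)$ shape makes the endgame a one-line norm computation ($\beta\bar\beta=0$). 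The two arguments are really the same proof seen through $\lambda_n$ versus $\Tr(BA^n)=\lambda_n+\Det(BA^n)/\lambda_n$; the paper's formulation has the advantage that its trace lemmas are reused later (see the proof of Proposition~\ref{Hunit}), while yours is more self-contained for this single statement.
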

On obtient même que les matrices $BA^n$ correspondent toutes \`a la m\^eme fraction continue p\'eriodique, puisque l'on va montrer qu'il existe une matrice $D \in M_2(\R)$ telle que pour tout $n$, $BA^n$ est une puissance de $D$.

Pour d\'emontrer cette proposition, nous aurons besoin de quelques lemmes : \\

\noindent Le lemme suivant permet de ramener le fait qu'une matrice soit dans un corps donn\'e \`a une \'egalit\'e de traces.

\begin{lemme} \label{pell}
	Soit $\delta$ entier non carr\'e.
	Alors il existe une matrice $U$ dans $GL_{2}(\Z)$ telle que les solutions positives $x$ \`a l'\'equation de Pell-Fermat : $x^2 - \delta y^2 = \pm4$ sont exactement les $\Tr(U^n), n \in \Z$.
\end{lemme}

\begin{proof}
	Ecrivons $\delta = k^2\delta'$ o\`u $\delta'$ est sans facteur carr\'e.
	
	Supposons d'abord que $k=1$.
	Montrons que dans ce cas les solutions $x$ à l'équation de Pell-Fermat $x^2 - \delta y^2 = \pm4$ sont exactement les traces des unités (c'est-à-dire des entiers de norme $\pm1$) du corps $\Q[\sqrt{\delta'}]$.
	
	Pour un réel quadratique $z = x' + y' \sqrt{\delta'}$, la trace vaut $\Tr(z) = 2x'$ et la norme vaut $\Norm(z) = {x'}^2 - \delta' {y'}^2$.
	On a donc
	\[ \Tr^2(z) - \delta' (2y')^2 = 4\Norm(z), \]
	et donc la trace est solution $x$ de l'équation $x^2 - \delta' y^2 = \pm4$ si et seulement si $N(z) = \pm1$. 
	On vérifie que si $N(z) = \pm1$, alors $z$ est un entier de $\Q[\sqrt{\delta'}]$ si et seulement si $\Tr(z)$ et $2y'$ sont des entiers, sachant que l'anneau des entiers est $\Z[\frac{\sqrt{\delta'}+1}{2}]$ si $\delta' \equiv 1$ (mod $4$) et est $\Z[\sqrt{\delta'}]$ si $\delta' \not\equiv 1$ (mod $4$).
%
%
	Ainsi, on obtient bien que les parties $x$ des solutions $(x,y)$ entières sont exactement les traces des unit\'es. 
	Or, le th\'eor\`eme des unit\'es de Dirichlet nous donne l'existence d'une unit\'e fondamentale $u \in O_{\delta'}^*$ dont les puissances engendrent, au signe pr\`es, le groupe des unit\'es.
	
	Soit $U$ la matrice de la multiplication par $u$ dans la base $\{1, u\}$. Alors $U \in GL_{2}(\Z)$, puisque $u$ est dans l'anneau d'entiers et son inverse aussi, et on a pour tout $n$, $\Tr(u^{n}) = \Tr(U^{n})$, d'o\`u le r\'esultat.
	
	Si maintenant on ne fait plus d'hypoth\`ese sur $k$, on remarque qu'un couple $(x,y)$ est solution de l'\'equation $x^2 - \delta y^2 = \pm 4$ si et seulement si $(x, ky)$ est solution de $x^2 - \delta' y^2 = \pm 4$ avec $x$ et $y$ entiers. Or, l'ensemble des unités $z = x' + y' \sqrt{\delta'}$ telles que $2y'$ est divisible par $k$ est un sous-groupe du groupe des unités. Donc il existe une certaine puissance de la matrice $U$ obtenue pr\'ec\'edemment qui convient.
\end{proof}

Le lemme suivant permettra de montrer (entre autres) que si les corps des matrices $BA^n$ sont tous les m\^emes, alors la matrice $A$ a aussi le m\^eme corps.

\begin{lemme} \label{infini-toujours}
	Soit $n_0 \in \Z$, soient $A, B, C$ trois matrices de $M_2(\R)$, avec $A$ et $C$ ayant chacune des valeurs propres distinctes en module, et avec $C$ inversible, et soient $a$ et $b$ deux entiers.
	Si l'\'egalit\'e $\Tr(BA^n) = \Tr(C^{an+b})$ est vraie pour tout $n \geq n_0$, alors les matrices $A$ et $C^a$ ont m\^emes valeurs propres, et l'\'egalit\'e a lieu pour tout $n \in \Z$.
\end{lemme}

\begin{proof}
	Si l'on appelle $\lambda$ et $\overline{\lambda}$ les valeurs propres de $A$, avec $\abs{\overline{\lambda}} < \abs{\lambda}$, et  $\mu$ et $\overline{\mu}$ les valeurs propres de $C$, avec $\abs{\overline{\mu}} < \abs{\mu}$, l'\'egalit\'e des traces se r\'e\'ecrit :
	\( \alpha \lambda^n + \beta \overline{\lambda}^n = \mu^{an+b} + \overline{\mu}^{an+b} \) pour certains r\'eels $\alpha$ et $\beta$ et pour tout entier $n \geq n_0$.
	
	Regardons les termes dominants de part et d'autre.
	Si on avait $\alpha = 0$, on aurait $\overline{\lambda} = \mu^a$ et $\beta = \mu^b$, puis $\overline{\mu} = 0$, ce qui contredit l'inversibilit\'e de $C$.
	On a donc $\alpha \neq 0$.
	On obtient donc $\lambda = \mu^a$ et $\alpha = \mu^b$, puis on obtient $\overline{\lambda} = \overline{\mu}^a$ et $\beta = \overline{\mu}^b$.
	Donc $A$ et $C$ ont m\^emes valeurs propres et l'\'egalit\'e a lieu pour tout $n \in \Z$.
\end{proof}

D'apr\`es le lemme qui suit, les matrices $BA^n$ ont toutes pour corps $\Q[\sqrt{\delta}]$ d\`es qu'il existe deux entiers $i$ et $j$ tels que les matrices $BA^i$ et $BA^j$ ont pour corps $\Q[\sqrt{\delta}]$.

\begin{lemme} \label{2-toujours}
	Soient $A$ et $C$ des matrices positives ayant m\^emes valeurs propres, et $B$ et $D$ des matrices de $M_2(\R)$ quelconques. 
	Si la relation $\Tr(BA^n) = \Tr(DC^n)$ est vraie pour deux valeurs de $n$, alors elle est vraie pour tout $n \in \Z$.
\end{lemme}

\begin{proof} 
	Soient $\lambda$ et $\overline{\lambda}$ les deux valeurs propres distinctes de $A$ (et donc aussi de $C$).
	La relation $\Tr(BA^n) = \Tr(DC^n)$ se r\'e\'ecrit $\alpha \lambda^n + \beta \overline{\lambda}^n = 0$ pour des r\'eels $\alpha$ et $\beta$ indépendants de $n$, 
	puisque les coefficients de chacune des deux matrices $BA^n$ et $DC^n$ sont des combinaisons linéaires en $\lambda^n$ et $\overline{\lambda}^n$.
	Si la relation est vraie pour deux valeurs de $n$, on obtient alors un syst\`eme de Cramer, donc $\alpha = \beta = 0$.
\end{proof}


La suite de ce chapitre est consacr\'ee \`a la preuve de la proposition~\ref{prop-suites1}.

\begin{proof}[Preuve de la proposition~\ref{prop-suites1}]
	Supposons que l'on ait une suite de matrices $BA^n$ ayant toutes pour corps $\Q[\sqrt{\delta}]$, o\`u $A$ et $B$ sont deux matrices positives et $\delta$ est un entier sans facteur carr\'e.
	Il existe alors un entier $\alpha_n$ tel que le discriminant $\discr(BA^n) = \Tr(BA^n)^2 - 4\Det(BA^n)$ soit \'egal \`a $\delta \alpha_n^2$.
	Et donc $\Tr(BA^n)$ est solution $x$ de l'\'equation de Pell-Fermat $x^2 - \delta y^2 = 4\Det(BA^n)$. D'apr\`es le lemme~~\ref{pell}, il existe donc une matrice $U \in GL_{2}(\Z)$ (ind\'ependante de $n$) telle que pour tout $n$, il existe un entier $i_n$ tel que $\Tr(BA^n) = \Tr(U^{i_n})$, et on a alors aussi $\Det(BA^n) = \Det(U^{i_n})$ \`a partir d'un certain rang. On peut alors utiliser le lemme suivant :
	
	\begin{lemme} \label{arithm}
		Soient $U$ une matrice de $GL_2(\Z)$ ayant des valeurs propres distinctes en module, $B \in M_2(\R)$ une matrice, et $A$ une matrice positive.
		Si $(i_n)$ est une suite d'entiers telles que pour tout $n$, $\Tr(BA^n) = \Tr(U^{i_n}) \neq 0$,
		alors, la suite $(i_n)$ est arithm\'etique.
	\end{lemme}
	
	\begin{proof}
		Si l'on appelle $\lambda$ et $\mu$ les deux valeurs propres de la matrice positive $A$ (avec $\lambda > 1$ et $\abs{\mu} < 1$), alors la trace de $BA^n$ s'\'ecrit $ e\lambda^n + f\mu^n$ pour certains r\'eels $e$ et $f$ indépendants de $n$, 
		et on a $\Tr(U^{i_n}) = \alpha^{i_n} + \beta^{i_n}$, o\`u $\alpha$ et $\beta$ sont les deux valeurs propres de la matrice $U$ (avec $\alpha$ la plus grande valeur propre de $U$ en module).
		En divisant par $\alpha^{i_n}$ de part et d'autre de l'\'egalit\'e $e\lambda^n + f\mu^n = \alpha^{i_n} + \beta^{i_n}$, on obtient $\lim_{n \rightarrow \infty}{\cfrac{e\lambda^n}{\alpha^{i_n}}} = 1$.
		En prenant le $\log$, on obtient alors que $\lim_{n \rightarrow \infty}{i_n\ \log(\alpha) - n\ \log(\lambda) - \log(e)} = 0$.
		On a donc $i_n = an + b + \epsilon_n$, o\`u $\lim_{n \rightarrow \infty}{\epsilon_n} = 0$, $a = \cfrac{\ \log(\lambda)}{\ \log(\alpha)}$ et $b = \cfrac{\ \log(e)}{\ \log(\alpha)}$.
		Et comme $i_n$ est entier, on a finalement $i_n = an + b$ \`a partir d'un certain rang. D'apr\`es le lemme~~\ref{infini-toujours}, cela est alors vrai pour tout $n$, ce qui termine la preuve du lemme.
	\end{proof}
	
	Dans ce dernier lemme $a$ et $b$ sont des entiers, puisque $i_0$ et $i_1$ sont entiers, et le lemme~~\ref{infini-toujours} donne que les matrices $A$ et $U^a$ ont m\^emes valeurs propres.
	On obtient donc les \'egalit\'es $\Tr(BA^n) = \Tr(U^{an+b})$ et $\Det(BA^n) = \Det(U^{an+b})$ pour tout $n$.
	L'\'egalit\'e des traces nous donne que dans une base dans laquelle $A$ est diagonale, la matrice $B$ est de la forme $\left(
	\begin{array}{cc}
		\alpha^b &*\\
		* &\beta^b
	\end{array}
	\right)$ o\`u $\alpha$ et $\beta$ sont les valeurs propres de $U$.
	L'\'egalit\'e des d\'eterminants $\Det(B) = \Det(U^b) = \alpha^b \beta^b$ implique alors que $B$ est trigonale dans la base de diagonalisation de A.
	Les matrices $A$ et $B$ sont \`a coefficients entiers et ont un espace propre en commun : elles sont donc simultan\'ement diagonalisables, puisque l'on obtient le deuxi\`eme espace propre par l'\'element non trivial de $Gal(\Q[\sqrt{\delta}]/\Q)$.
	Si l'on pose $D$ la matrice qui vaut $\left(
	\begin{array}{cc}
		\alpha &0\\
		0 &\beta
	\end{array}
	\right)$ dans la base de diagonalisation de $A$, on a $B = D^b$ et $A = D^a$, donc la suite $BA^n$ est triviale.
	Ceci termine la preuve de la proposition \ref{prop-suites1}. 
\end{proof}

\section{Fractions continues de la forme $ [ \overline{ \pat{B}\pat{A}\pat{C}\pattranspA } ] $} \label{BACtA}

Dans ce chapitre, nous \'etudions les fractions continues p\'eriodiques correspondant aux matrices de la forme $BAC\transpA$. Exp\'erimentalement, de telles fractions continues p\'eriodiques apparaissent souvent, et nous allons tenter d'expliquer pourquoi, et en m\^eme temps g\'en\'eraliser et donner les r\'eciproques de r\'esultats qui permettent d'aboutir au th\'eor\`eme~\ref{MN}.

Nous allons voir dans ce chapitre que l'on peut ramener l'\'etude des suites de matrices de la forme $BA^nC\transpA^n$ (qui est faite dans le chapitre~\ref{suites2}) \`a l'\'etude de suites de la forme $HA^n$ pour certaines matrices $H$ non inversibles. On se ram\`ene donc \`a des suites d'une forme semblable \`a celles qui ont \'etudi\'ees dans le chapitre pr\'ec\'edent.

\begin{lemme} \label{lB+tB}
	Soit $B$ une matrice de $GL_2(\Z)$.
	On a équivalence entre les deux points suivants :
	\begin{enumerate}
		\item	$B+\transp{B}$ est de rang 1,
		\item Il existe une matrice $N$ sym\'etrique de rang 1 telle que $B = N \pm S_0$,
	\end{enumerate}
	où $S_0$ est la matrice définie dans le chapitre \ref{preuve}. \\
	De plus, si l'un de ces points est satisfait, alors on a $\Det(B) = 1$.
\end{lemme}

\begin{proof}
	\underline{$\Longrightarrow$}
	Si $B = \left(
	\begin{array}{cc}
	a &b\\
	c &d
	\end{array}
	\right)$, on a l'\'egalit\'e
	$$0 = \Det(B+\transp{B}) = 4ad-(b+c)^2 = 4\Det(B) - (b-c)^2.$$
	Donc on obtient $\Det(B) = 1$ et $b-c = \pm 2$.
	Quitte \`a la transposer, la matrice $B$ peut donc s'\'ecrire 
	$B = \left(
	\begin{array}{cc}
	a &b'-1\\
	b'+1 &d
	\end{array}
	\right)$, avec $ad = b'^2$.
	
	\noindent \underline{$\Longleftarrow$}
	Clair.
\end{proof}

La proposition qui suit g\'en\'eralise le lemme~\ref{ltrH}.

\begin{prop} \label{FQ}
	Soient $B$ et $C$ deux matrices de $GL_2(\Z)$.
	On a l'\'equivalence :
	\begin{enumerate}
		\item  Il existe une matrice $H \in \M_2(\R)$ de rang 1 et un r\'eel $\lambda$ tels que pour toute matrice $A \in M_2(\R)$ on ait :
		$\Tr(BAC\transpA) = \Tr^2(HA) + \lambda \Det(A)$.
		\item Les matrices $B+\transp{B}$ et $C+\transp{C}$ sont de rang $1$.
	\end{enumerate}
	De plus, si l'un des points est satisfait, alors on a n\'ecessairement $\lambda = \pm 2$, et on a l'\'egalit\'e suivante pour toute matrice $A \in M_2(\R)$ :
	\[ \discr(BAC\transp{A}) = \Tr^2(HA)(\Tr^2(HA) \pm 4\Det(A)). \]
\end{prop}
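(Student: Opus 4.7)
Je prouverais cette \'equivalence en deux sens, en exploitant syst\'ematiquement la d\'ecomposition d'une matrice $M \in M_2(\R)$ en parties sym\'etrique et antisym\'etrique (cette derni\`ere \'etant toujours un multiple scalaire de $S_0$), et en utilisant les lemmes~\ref{tr-sym} ($\Tr(S_0 P) = 0$ pour $P$ sym\'etrique) et~\ref{Sdet} ($A S_0 \transp{A} = \Det(A) S_0$).

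Pour la direction $(2) \Rightarrow (1)$, j'invoquerais le lemme~\ref{lB+tB} : il donne $\Det(B) = \Det(C) = 1$ et l'\'ecriture $B = b_0 \transp{b_0} + \epsilon_B S_0$, $C = c_0 \transp{c_0} + \epsilon_C S_0$ avec $b_0, c_0 \in \R^2$ non nuls et $\epsilon_B, \epsilon_C \in \{\pm 1\}$. Je d\'evelopperais alors $\Tr(BAC\transp{A})$ en quatre termes : le terme principal $\Tr(b_0 \transp{b_0} A c_0 \transp{c_0} \transp{A})$ vaut par cyclicit\'e $(\transp{b_0} A c_0)^2 = \Tr^2(HA)$ en posant $H := c_0 \transp{b_0}$ de rang 1 ; les deux termes crois\'es du type $\epsilon_C \Tr(b_0 \transp{b_0} A S_0 \transp{A})$ s'annulent en combinant les deux lemmes ci-dessus ; enfin le dernier terme vaut $\epsilon_B \epsilon_C \Tr(S_0 A S_0 \transp{A}) = -2\epsilon_B\epsilon_C \Det(A)$, fournissant $\lambda = \pm 2$.

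Pour la r\'eciproque $(1) \Rightarrow (2)$, je d\'ecomposerais $B = B^s + b S_0$ et $C = C^s + c S_0$ avec $B^s, C^s$ sym\'etriques. La m\^eme analyse de trace donne
\[ \Tr(BAC\transp{A}) = \Tr(B^s A C^s \transp{A}) - 2bc\Det(A), \]
transportant l'hypoth\`ese au cas sym\'etrique avec $\lambda' := \lambda + 2bc$. J'identifierais alors les coefficients des dix mon\^omes quadratiques en les entr\'ees de $A$ dans l'\'egalit\'e $\Tr(B^s A C^s \transp{A}) = \Tr^2(HA) + \lambda' \Det(A)$ : un calcul \'el\'ementaire montre que $\Det(B^s)\Det(C^s)$ s'annule d\`es que $\Det(H) = 0$, et une disjonction de cas selon les entr\'ees nulles de $B^s, C^s$ conclut que $\Det(B^s) = \Det(C^s) = 0$, soit le rang au plus 1. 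Enfin, on \'ecarte le cas $B^s = 0$ (qui impliquerait $B = \pm S_0$, donc $\Tr(BAC\transp{A}) = \mp 2c\Det(A)$, ce qui for\c{c}erait $\Tr^2(HA)$ \`a \^etre un multiple de $\Det(A)$, donc identiquement nul, contredisant la rang~1 de $H$) ; on conclut que $B + \transp{B}$ et $C + \transp{C}$ sont de rang exactement 1.

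L'obstacle principal me para\^it \^etre l'identification des coefficients dans la direction $(1) \Rightarrow (2)$, avec sa disjonction des cas d\'eg\'en\'er\'es selon les annulations des entr\'ees. Une fois l'\'equivalence \'etablie, la formule sur le discriminant en d\'ecoule imm\'ediatement : $\Det(B) = \Det(C) = 1$ entra\^ine $\Det(BAC\transp{A}) = \Det^2(A)$, donc
\[ \discr(BAC\transp{A}) = \big(\Tr^2(HA) + \lambda\Det(A)\big)^2 - 4\Det^2(A), \]
expression qui se factorise en $\Tr^2(HA)\big(\Tr^2(HA) \pm 4\Det(A)\big)$ gr\^ace \`a $\lambda^2 = 4$, le terme r\'esiduel $(\lambda^2 - 4)\Det^2(A)$ s'\'evanouissant.
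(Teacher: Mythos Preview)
Your argument is correct. The direction $(2) \Rightarrow (1)$ and the discriminant computation match the paper's proof essentially line for line (same use of Lemma~\ref{lB+tB}, same decomposition $B = b_0\transp{b_0} \pm S_0$, same identification $H = c_0\transp{b_0}$, same factorisation via $\lambda^2 = 4$).

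For the direction $(1) \Rightarrow (2)$, however, you and the paper take genuinely different routes. You propose a head-on identification of the coefficients of the quadratic form $A \mapsto \Tr(B^s A C^s \transp{A})$ in the four entries of $A$, followed by a case analysis to force $\Det(B^s) = \Det(C^s) = 0$. This works (diagonalising $B^s$ and $C^s$ simultaneously by orthogonal changes of basis on the left and right of $A$ makes the form diagonal with eigenvalues $\beta_i\gamma_j$, and the rank-1 condition then forces one $\beta_i$ and one $\gamma_j$ to vanish), but the details you leave to ``un calcul \'el\'ementaire'' and ``une disjonction de cas'' are not entirely trivial. The paper instead specialises $A$ to rank-one matrices of the shape $\left(\begin{smallmatrix} x & y \\ 0 & 0 \end{smallmatrix}\right)$, $\left(\begin{smallmatrix} 0 & 0 \\ x & y \end{smallmatrix}\right)$ and their transposes: since $\Det(A) = 0$ for these, the identity becomes $\Tr(B_0 A C_0 \transp{A}) = \Tr^2(HA)$, and a direct computation shows the left side is a scalar multiple of the binary quadratic form with matrix $B_0$ or $C_0$; being a perfect square forces rank $\leq 1$. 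This specialisation trick avoids the full coefficient comparison and the case split. Your approach is more systematic but heavier; the paper's is shorter and more conceptual.
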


Tous les exemples connus de suites de fractions continues p\'eriodiques qui restent dans un corps quadratique donn\'e ( voir par exemple \cite{mcmullen}, \cite{wilson} et le chapitre~\ref{exs} ci-apr\`es ), correspondent \`a des suites de matrices de la forme $BA^nC\transpA^n$ avec $B+\transp{B}$ et $C+\transp{C}$ de rang $1$. Nous ignorons si cela est toujours vrai.

L'expression du discriminant $\discr(BAC\transp{A})$ donn\'ee par la proposition \ref{FQ}, donne une factorisation par un carr\'e. Cela est favorable \`a ce que le corps quadratique de la matrice $BAC\transpA$ soit petit et explique donc un peu pourquoi l'on observe un certain nombre de fractions continues p\'eriodiques de cette forme.

\begin{proof}[Preuve de la proposition~\ref{FQ}]
	\underline{$\Longrightarrow$}
	Ecrivons les matrices $B$ et $C$ sous la forme : $B = B_0 + \beta S_0$ et $C = C_0 + \gamma S_0$, o\`u $B_0$ et $C_0$ sont deux matrices sym\'etriques de $M_2(\R)$, $\beta$ et $\gamma$ sont deux r\'eels et $S_0$ est la matrice $\left( \begin{array}{cc} 0 & -1 \\ 1 & 0 \end{array} \right)$.
	On a alors $\Tr(S_0AC_0\transp{A}) = \Tr(B_0AS_0\transp{A}) = 0$ puisque les matrices $AC_0\transp{A}$ et $\transp{A}B_0A$ sont sym\'etriques, et on a $\Tr(S_0AS_0\transp{A}) = -2\Det(A)$.
	On obtient donc l'\'egalit\'e $\Tr(BAC\transp{A}) = \Tr(B_0AC_0\transp{A}) - 2\beta\gamma \Det(A)$.
	On souhaite maintenant montrer que les matrices $B_0$ et $C_0$ sont chacune de rang 1.
	En \'evaluant la forme quadratique $A \mapsto \Tr(BAC\transp{A})$ en $A = \left( \begin{array}{cc} x & y \\ 0 & 0 \end{array} \right)$, en $A = \left( \begin{array}{cc} 0 & 0 \\ x & y \end{array} \right)$, et en les transpos\'ees, on obtient \`a chaque fois des formes quadratiques en $x$ et $y$ qui doivent \^etre des carr\'es, et qui ont \`a chaque fois pour matrice un multiple de $B_0$ ou de $C_0$.
	Ceci nous donne que les matrices $B_0$ et $C_0$ sont chacune des matrices de formes quadratiques carr\'ees.
	Et ni la matrice $B_0$ ni la matrice $C_0$ ne peuvent \^etre nulles puisque la matrice $H$ est non nulle. Donc les matrices $B_0$ et $C_0$ sont de rang $1$.
	
	\underline{$\Longleftarrow$}
	Si des matrices $B$ et $C$ de $GL_2(\Z)$ sont telles que $B+\transp{B}$ et $C+\transp{C}$ sont de rang $1$, d'apr\`es le lemme~\ref{lB+tB} on peut les \'ecrire sous la forme $B = b_0\transp{b_0} \pm S_0$ et $C = c_0\transp{c_0} \pm S_0$ pour des vecteurs $b_0$ et $c_0$ de $M_{2,1}(\R)$.
	On a alors $\Tr(BAC\transpA) = \Tr(b_0\transp{b_0}Ac_0\transp{c_0}\transpA) \pm \Tr(S_0AS_0\transpA)$, puisque les deux termes $\Tr(S_0Ac_0\transp{c_0}\transpA)$ et $\Tr(b_0\transp{b_0}AS_0\transpA)$ sont nuls, \'etant donn\'e que les matrices $Ac_0\transp{c_0}\transpA$ et $\transpA b_0\transp{b_0}A$ sont sym\'etriques.
	On a ensuite $\Tr(b_0\transp{b_0}Ac_0\transp{c_0}\transpA) = (\transp{b_0}Ac_0)^2 = \Tr(c_0\transp{b_0}A)^2$, et $\Tr(S_0AS_0\transpA) = -2\Det(A)$, d'o\`u le r\'esultat avec $H := c_0\transp{b_0}$. \\
	L'expression du discriminant annonc\'ee se d\'eduit facilement du point 1. :
	\begin{align*}
		\discr(BAC\transpA) &= \Tr^2(BAC\transpA) - 4\Det(BAC\transpA) \\
							&= \Tr^4(HA) \pm 4\Tr^2(HA)\Det(A) + 4 - 4\Det(BC) \\ 
							&=\Tr^2(HA)( \Tr^2(HA) \pm 4\Det(A) ).
	\end{align*}
\end{proof}

\begin{rem} \label{rqFQ}
	Dans la proposition~\ref{FQ}, si $B$ est de la forme $b_0\transp{b_0} + \epsilon_B S_0$ et $C$ est de la forme $c_0\transp{c_0}+\epsilon_C S_0$ pour des vecteurs $b_0$ et $c_0$ de $M_{2,1}(\R)$ et des signes $\epsilon_B$ et $\epsilon_C$ de $\{ -1, 1 \}$,
	alors $\lambda$ vaut $-2\epsilon_B\epsilon_C$ et $H$ vaut (au signe pr\`es) $c_0\transp{b_0}$ et ce sont les seules solutions.
\end{rem}

La proposition suivante permet de d\'eterminer toutes les matrices $B \in \Gamma$ telles que $B+\transp{B}$ est de rang 1.

\begin{prop} \label{B+tB}
	Soit $B \in \Gamma$. On a l'\'equivalence :
	\begin{enumerate}
		\item	$B+\transp{B}$ est de rang 1,
		\item	Il existe des entiers $k \geq 1$ et $n \geq 2$, et une matrice $F$ de $\Gamma$ tels que $B$ ou $\transp{B}$ vaut $F\m{(n-1, 1, k-1, n)}\transp{F}$.
	\end{enumerate}
\end{prop}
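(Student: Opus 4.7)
The direction $(2) \Rightarrow (1)$ is a direct computation: since
\[
    \m{(n-1, 1, k-1, n)} = \begin{pmatrix} k & kn+1 \\ kn-1 & kn^2 \end{pmatrix} = N_0 - S_0
\]
with $N_0 = k\begin{pmatrix}1\\n\end{pmatrix}\begin{pmatrix}1 & n\end{pmatrix}$ symmetric of rank~$1$, Lemma~\ref{Sdet} gives $F\m{(n-1,1,k-1,n)}\transp{F} = FN_0\transp{F} - \Det(F)S_0$, so $B+\transp{B} = 2FN_0\transp{F}$ has rank~$1$; the case in which it is $\transp{B}$ (rather than $B$) that has the prescribed form is symmetric.

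For $(1) \Rightarrow (2)$, Lemma~\ref{lB+tB} gives $B = N + \epsilon S_0$ with $N$ symmetric of rank~$1$, $\epsilon = \pm 1$, and $\Det(B) = 1$. Replacing $B$ by $\transp{B}$ if $\epsilon = +1$, I may assume $\epsilon = -1$. Parametrizing $N = k\begin{pmatrix}p\\q\end{pmatrix}\begin{pmatrix}p & q\end{pmatrix}$ with $k, p, q \geq 1$ and $\gcd(p,q) = 1$ yields
\[
    B = \begin{pmatrix} kp^2 & kpq+1 \\ kpq-1 & kq^2 \end{pmatrix},
\]
and the positivity criterion of Proposition~\ref{carGamma} forces $q > p$. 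I argue by induction on $p$. The base case $p = 1$ is immediate: $B = \m{(q-1, 1, k-1, q)}$ with $F = I_2$ and $n = q \geq 2$.

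For $p \geq 2$, set $i = \floor{q/p}$ and $r = q - ip$, so $1 \leq r < p$ and $\gcd(r,p) = 1$. The crucial step is to verify that the positive decomposition of $B$ as a product of generators $\m{j}$ begins and ends with the same index~$i$. Using the formula $i_n = \min(\floor{b/a}, \floor{d/c})$ of Proposition~\ref{carGamma} and its transposed version $i_1 = \min(\floor{c/a}, \floor{d/b})$, each of the four quotients differs from $q/p$ by less than $r/p$ in absolute value, so all four floors equal $i$; the key estimate is $r/p > 1/(kp^2)$, valid because $p \geq 2$. Consequently, $B' := T_i^{-1} B T_i^{-1}$ still belongs to $\Gamma$, and a direct calculation gives $B' = k\begin{pmatrix}r\\p\end{pmatrix}\begin{pmatrix}r & p\end{pmatrix} + S_0$: the sign of $S_0$ has \emph{flipped}, and the new first parameter is $r < p$.

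Applying the induction hypothesis to $\transp{B'}$, which is of the $\epsilon = -1$ form with parameters $(k, r, p)$, yields either $\transp{B'} = F_0 M \transp{F_0}$ or $B' = F_0 M \transp{F_0}$ with $M = \m{(n-1, 1, k-1, n)}$ and $F_0 \in \Gamma$. Conjugating by $T_i$ gives respectively $\transp{B} = (T_i F_0) M \transp{(T_i F_0)}$ or $B = (T_i F_0) M \transp{(T_i F_0)}$, and $T_i F_0 \in \Gamma$ closes the induction. The main obstacle is the simultaneous left/right reduction by $T_i^{-1}$: its positivity requires the equality $i_1 = i_n$, which is essentially the Euclidean algorithm on $(p, q)$ reappearing in the decomposition, and the systematic sign flip $\epsilon \mapsto -\epsilon$ across one inductive step is precisely what forces the conclusion to accommodate $B$ or $\transp{B}$.
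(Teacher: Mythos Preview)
Your proof is correct and takes a genuinely different route from the paper's.

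The paper parametrizes $B$ via the squarefree kernel as $\begin{pmatrix} zx^2 & xyz+1 \\ xyz-1 & zy^2 \end{pmatrix}$, computes the first and last indices $i = \floor{(xyz-1)/(zx^2)}$ and $j = \floor{(xyz+1)/(zx^2)}$ of the positive decomposition of $B$, asserts that $i \neq j$, and then shows directly (no induction, no nontrivial $F$) that $B = \m{(j-1,1,zx^2-1,j)}$. Your argument instead runs the Euclidean algorithm on $(p,q)$: you show that for $p \geq 2$ the first and last indices \emph{coincide}, peel off a $T_i$ on each side, note the sign flip $-S_0 \mapsto +S_0$, transpose, and induct on a strictly smaller first parameter.

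In fact your approach patches a gap in the paper's argument. The paper's claim that the first and last indices differ is justified only by the remark that $I_2$ and the single generators $\m{i}$ are not of the form $N \pm S_0$; but this merely rules out decompositions of length $\leq 1$, not the equality $i = j$. For instance $B = \begin{pmatrix} 4 & 7 \\ 5 & 9 \end{pmatrix} = \m{(1,3,1,1)}$ (your $k=1$, $p=2$, $q=3$; the paper's $z=1$, $x=2$, $y=3$) has $i = j = 1$, and one checks that neither $B$ nor $\transp{B}$ equals any $\m{(n-1,1,k-1,n)}$, so the paper's conclusion with $F = I_2$ fails here. Your induction handles this example cleanly, producing $\transp{B} = T_1 \, \m{(1,1,0,2)} \, T_1$ with $F = T_1 \neq I_2$, exactly as the proposition allows. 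So your Euclidean induction not only gives an alternative proof but covers the case the paper's direct argument misses, at the cost of a nontrivial conjugating factor~$F$.
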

La transposée d'une matrice positive est positive, et on a $\m{(n, 1, 0, n+1)} = \m{(n, n+2)}$, donc le deuxi\`eme point de la proposition~\ref{B+tB} entra\^ine automatiquement que $B$ est une matrice positive.

\begin{proof} 
	\underline{$\Longleftarrow$}
	V\'erification facile sachant que l'on a
	$$\m{(n-1, 1, k-1, n)} = \left(
	\begin{array}{cc}
	k &kn+1\\
	kn-1 &kn^2
	\end{array}
	\right).$$
	
	\noindent \underline{$\Longrightarrow$}
	Quitte \`a transposer la matrice $B$, on peut l'\'ecrire sous la forme : $\left(
	\begin{array}{ccc}
	a & b+1\\
	b-1 & c
	\end{array}
	\right)$ avec $ac = b^2$.
	On peut alors \'ecrire les entiers $a$ et $c$ sous la forme : $a = zx^2$ et $c = z'y^2$ avec $z$ et $z'$ sans facteurs carr\'es. La condition $ac = b^2$ entra\^ine alors que $z = z'$ et $b = \pm xyz$.
	Comme la matrice $B$ est dans $\Gamma$, on peut donc \'ecrire 
	$$B = \left(
	\begin{array}{ccc}
	zx^2 &xyz+1\\
	xyz-1 &zy^2
	\end{array}
	\right), \quad \text{avec} \quad x,y,z \geq 1.$$
	On peut supposer que la premi\`ere matrice $\m{i}$ et la derni\`ere matrice $\m{j}$ qui apparaissent dans la d\'ecomposition de $B$ en produit de matrices $\m{k}$, $k \geq 1$, sont distinctes. En effet, ni l'identit\'e $I_2$ ni les matrices $\m{i}$, $i \geq 1$ ne sont de la forme $N \pm S_0$, avec $N$ matrice sym\'etrique de rang 1. \\
	On a ensuite $i = \floor{ \cfrac{xyz-1}{zx^2} }$ et $j = \floor{ \cfrac{xyz+1}{zx^2} }$, puisque $\Det(B) = 1 > 0$ et $zx^2 > 0$.
	De plus, on a $i < j$ puisque l'on a $i \neq j$. Et on a les in\'egalit\'es :
	$$izx^2 \leq xyz-1 < (i+1)zx^2,$$
	$$jzx^2 \leq xyz+1 < (j+1)zx^2.$$
	Donc en particulier on a $(j-i-1)zx^2 < 2$.
	On obtient alors deux cas :
	
	\emph{Premier cas} : $x = z = 1$ \\
	On a alors $B = \left(
	\begin{array}{ccc}
	1 &y+1\\
	y-1 &y^2
	\end{array}
	\right) = \m{(y-1, y+1)} = \m{(y-1, 1, 0, y)}$.
	
	\emph{Deuxi\`eme cas} : $xz \geq 2$ et j = i+1\\
	On a $y - \cfrac{1}{xz} < jx \leq y+ \cfrac{1}{xz}$, avec $xz \geq 2$ et comme $y$ et $jx$ sont des entiers, ceci entra\^ine que $y = jx$.
	Et donc finalement
	$$ B = \left(
		\begin{array}{ccc}
			zx^2 &jzx^2+1\\
			jzx^2-1 &zj^2x^2
		\end{array}
		\right) \\
		 =  \m{(j-1, 1, zx^2-1, j)}. $$
\end{proof}

Le corollaire qui suit nous permet de conna\^itre la matrice $H$ qui appara\^it dans l'\'ecriture $\Tr(BAC\transpA) = \Tr^2(HA) \pm 2\Det(A)$ pour toute matrice $A \in M_2(\R)$, en fonction des d\'ecompositions des matrices positives $B$ et $C$ comme produits de matrices $\m{k}$, donn\'ees par la proposition pr\'ec\'edente.

Etant fix\'ee une matrice $A$ positive de corps $\Q[\sqrt{\delta}]$, cela nous permet de ramener la recherche de matrices $B$ et $C$ telles que les rangs de $B+\transp{B}$ et de $C+\transp{C}$ sont 1, et telles que les corps des matrices $BA^nC\transpA^n$ sont tous les m\^emes, \`a la recherche des matrices $H$ de rang 1 telles que $\Tr^2(HA)(\Tr^2(HA) \pm 4\Det(A))$ ( c'est-\`a-dire le discriminant de $BAC\transpA$ ) soit de la forme $\delta\alpha^2$ pour $\alpha$ entier. 

\begin{cor} \label{H}
	Si l'on a
	$$
	\begin{array}{ccl}
		B & = & M\m{(m-1, 1, k-1, m)} \transp{M}, \\
		C & = & N\m{(n-1, 1, l-1, n)} \transp{N},
	\end{array}
	$$
	pour $n,m \geq 1$, $k,l \geq 0$ et $M, N \in GL_2(\Z)$, alors la matrice $H \in M_2(\R)$ telle que $\Tr(BAC\transpA) = (\Tr(HA))^2 \pm 2\Det(A)$ pour toute matrice $A \in M_2(\R)$, s'\'ecrit :
	$$H = N\m{n} \left(
	\begin{array}{cc}
	0 &0\\
	0 &\sqrt{kl}
	\end{array}
	\right) \m{m}\transp{M}.$$
\end{cor}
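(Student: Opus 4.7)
L'id\'ee directrice est de faire appara\^itre les matrices $B$ et $C$ sous la forme $b_0' \transp{b_0'} \pm S_0$ et $c_0' \transp{c_0'} \pm S_0$, ce qui, gr\^ace \`a la remarque~\ref{rqFQ}, d\'etermine $H$ (au signe pr\`es) comme $c_0' \transp{b_0'}$. Il ne reste plus ensuite qu'\`a identifier $c_0' \transp{b_0'}$ avec l'expression annonc\'ee.

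D'abord, en s'appuyant sur le calcul explicite d\'ej\`a fait dans la preuve de la proposition~\ref{B+tB}, j'observe que la matrice motif
\[ \m{(m-1,1,k-1,m)} = \begin{pmatrix} k & km+1 \\ km-1 & km^2 \end{pmatrix} \]
se d\'ecompose sous la forme $b_0 \transp{b_0} + S_0$ avec $b_0 = \sqrt{k}\,\transp{(1,m)}$, et de m\^eme $\m{(n-1,1,l-1,n)} = c_0 \transp{c_0} + S_0$ avec $c_0 = \sqrt{l}\,\transp{(1,n)}$. En appliquant le lemme~\ref{Sdet}, qui dit que $X S_0 \transp{X} = \Det(X) S_0$ pour toute matrice $X$, on obtient alors
\[ B = (Mb_0)\transp{(Mb_0)} + \Det(M)\, S_0, \qquad C = (Nc_0)\transp{(Nc_0)} + \Det(N)\, S_0. \]

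Ensuite, la remarque~\ref{rqFQ} garantit que, modulo un signe, la matrice $H$ de la proposition~\ref{FQ} est exactement $H = (Nc_0)\transp{(Mb_0)} = Nc_0\transp{b_0}\transp{M}$. Il reste donc \`a v\'erifier que
\[ c_0 \transp{b_0} = \m{n}\begin{pmatrix} 0 & 0 \\ 0 & \sqrt{kl} \end{pmatrix}\m{m}, \]
ce qui est une simple multiplication matricielle $2 \times 2$ (le membre de gauche vaut $\sqrt{kl}\begin{pmatrix} 1 & m \\ n & mn \end{pmatrix}$, et l'on v\'erifie sans peine que le membre de droite vaut la m\^eme chose). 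En multipliant \`a gauche par $N$ et \`a droite par $\transp{M}$, on obtient la formule annonc\'ee.

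Je ne vois pas de difficult\'e particuli\`ere dans cet argument : tout le travail conceptuel a \'et\'e fait dans la proposition~\ref{FQ}, la remarque~\ref{rqFQ} et la proposition~\ref{B+tB} ; le corollaire n'est qu'une mise en forme calculatoire. Le seul point \`a surveiller est la question du signe (puisque $H$ n'est d\'etermin\'ee qu'au signe pr\`es et que les signes $\pm S_0$ d\'ependent de $\Det(M)$ et $\Det(N)$) : mais l'\'enonc\'e du corollaire ne pr\'ecise d'ailleurs qu'une \'egalit\'e modulo signe, puisque seul $\Tr^2(HA)$ intervient dans la proposition~\ref{FQ}.
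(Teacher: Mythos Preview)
Your proof is correct and follows essentially the same approach as the paper: both rely on the remark~\ref{rqFQ} to identify $H$ as $c_0'\transp{b_0'}$ once $B$ and $C$ are written in the form (symmetric rank one) $\pm S_0$, and then finish with a direct computation. The paper organises things slightly differently --- it first reduces to $M=N=I_2$ via the substitution $A\mapsto \transp{M}AN$ and then reads off $H$ from the diagonal entries of $B$ and $C$ --- whereas you keep $M,N$ throughout and use the lemma~\ref{Sdet} to track the conjugation; this is a cosmetic difference only. One small slip: with $S_0=\begin{pmatrix}0&-1\\1&0\end{pmatrix}$ one has $\m{(m-1,1,k-1,m)}=b_0\transp{b_0}-S_0$ (not $+S_0$), but as you yourself note at the end this sign is immaterial for the formula for $H$.
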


\begin{proof} 
	On peut se ramener \`a $M = N = I_2$, puisque si \mbox{$B = MB'\transp{M}$} et \mbox{$C = NC'\transp{N}$}, alors $\Tr(BAC\transpA) = \Tr(B'(\transp{M}AN)C'\transp{(\transp{M}AN)})$, et $A \mapsto \transp{M}AN$ d\'ecrit $M_2(\R)$ quand $A$ d\'ecrit $M_2(\R)$ puisque $M$ et $N$ sont inversibles. \\
	La proposition~\ref{FQ} donne bien l'existence d'une matrice $H \in M_2(\R)$ telle que $\Tr(BAC\transpA) = (\Tr(HA))^2 \pm 2\Det(A)$ pour toute matrice $A \in M_2(\R)$, et d'apr\`es la remarque~\ref{rqFQ} elle est uniquement d\'etermin\'ee au signe pr\`es, et vaut
	$H = \left(
	\begin{array}{cc}
	\sqrt{ae} &\sqrt{ah}\\
	\sqrt{ed} &\sqrt{hd}
	\end{array}
	\right)$ si $B = \left(
	\begin{array}{cc}
	a &*\\
	* &d
	\end{array}
	\right)$ et $C = \left(
	\begin{array}{cc}
	e &*\\
	* &h
	\end{array}
	\right)$. 
	Or, $B$ vaut $\left(
	\begin{array}{cc}
	k & mk+1\\
	mk-1 & km^2
	\end{array}
	\right)$, et $C$ vaut $\left(
	\begin{array}{cc}
	l & nl+1\\
	nl-1 & ln^2
	\end{array}
	\right)$, d'o\`u le r\'esultat.
\end{proof}

\begin{cor}
	Soit $\delta$ un entier sans facteur carr\'e, soit $A$ une matrice positive, soient $B$ et $C$ des matrices de $\Gamma$ telles que $B+\transp{B}$ et $C+\transp{C}$ sont de rang $1$, et soit $H$ la matrice donn\'ee dans la proposition pr\'ec\'edente.\\
	On a l'\'equivalence:
	\begin{enumerate}
		\item Le corps de $BAC\transpA$ ou de $BA\transp{C}\transpA $ est $\Q[\sqrt{\delta}]$.
		\item L'entier $\cfrac{1}{\sqrt{s}} \Tr(HA)$ est solution $x$ d'une \'equation de Pell-Fermat $s x^2 - t y^2 = \pm 4$, pour des entiers $s$ et $t$ tels que $\cfrac{1}{\sqrt{s}} H \in M_2(\Z)$ et tels que le produit $st$ soit de la forme $\delta k^2$.
	\end{enumerate}
	En particulier, il suffit que $\Tr(HA)$ soit solution $x$ enti\`ere de l'\'equation $x^2 - \delta y^2 = \pm 4$ pour que le corps de $BAC\transpA$ ou de $BA\transp{C}\transpA $ soit $\Q[\sqrt{\delta}]$.
\end{cor}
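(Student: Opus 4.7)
L'approche consiste \`a exploiter la factorisation du discriminant donn\'ee par la proposition~\ref{FQ} :
\[
	\discr(BAC\transpA) = \Tr^2(HA)(\Tr^2(HA) \pm 4\Det(A)),
\]
o\`u le signe $\pm$ est d\'etermin\'e par le produit $\epsilon_B \epsilon_C$ des signes apparaissant dans les \'ecritures $B = b_0\transp{b_0} + \epsilon_B S_0$ et $C = c_0\transp{c_0} + \epsilon_C S_0$ (remarque~\ref{rqFQ}). Le point cl\'e est que transposer $C$ revient \`a changer $\epsilon_C$ en $-\epsilon_C$, et donc \`a inverser ce signe ; consid\'erer \`a la fois $BAC\transpA$ et $BA\transp{C}\transpA$ permet ainsi de couvrir les deux signes, ce qui rend compte du $\pm 4$ de l'\'equation de Pell-Fermat. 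D'apr\`es le corollaire~\ref{H}, on peut \'ecrire $H = \sqrt{s}\,H_0$ avec $H_0 \in M_2(\Z)$ ; posant $x := \Tr(HA)/\sqrt{s} = \Tr(H_0 A) \in \Z$ et utilisant $\Det(A) = \pm 1$, le discriminant vaut $sx^2(sx^2 \pm 4)$. Comme $\delta$ est sans facteur carr\'e et le discriminant entier, le corps de la matrice consid\'er\'ee vaut $\Q[\sqrt{\delta}]$ si et seulement si $sx^2(sx^2 \pm 4) = \delta \alpha^2$ pour un certain entier $\alpha$.

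Pour le sens $\impi$, si $sx^2 - ty^2 = \pm 4$ avec le signe adapt\'e \`a la matrice consid\'er\'ee et si $st = \delta k^2$, alors $sx^2 \cdot ty^2 = st(xy)^2 = \delta(kxy)^2$, donc le discriminant de $BAC\transpA$ ou celui de $BA\transp{C}\transpA$ est bien $\delta$ fois un carr\'e entier, et le corps correspondant vaut $\Q[\sqrt{\delta}]$.

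Pour le sens $\imp$, on pose $y := 1$ et $t := sx^2 \pm 4\Det(A)$ en prenant le signe de la formule du discriminant pour la matrice dont le corps est $\Q[\sqrt{\delta}]$ ; alors automatiquement $sx^2 - ty^2 = \mp 4\Det(A) \in \{\pm 4\}$. Il reste \`a v\'erifier que $st = \delta \alpha^2 / x^2$ est bien de la forme $\delta k^2$ avec $k$ entier, autrement dit que $x$ divise $\alpha$.

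L'unique point technique sera cette divisibilit\'e, qui utilise de fa\c{c}on essentielle le caract\`ere sans facteur carr\'e de $\delta$ via un calcul de valuations $p$-adiques : pour tout premier $p$, si $v_p(x) = a$, alors $p^{2a}$ divise $sx^2 \cdot t = \delta \alpha^2$, d'o\`u $v_p(\alpha^2) \geq 2a - v_p(\delta) \geq 2a - 1$ ; la parit\'e de $v_p(\alpha^2)$ force alors $v_p(\alpha^2) \geq 2a$, soit $v_p(\alpha) \geq a$. Ainsi $x \mid \alpha$ et $k := \alpha/x$ convient, ce qui termine le sens $\imp$ et donne au passage le cas particulier annonc\'e en prenant $s = 1$ et $t = \delta$.
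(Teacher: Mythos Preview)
Your proof is correct and follows the same approach as the paper's: compute the discriminant via Proposition~\ref{FQ}, toggle the sign by transposing $C$, and reformulate ``$\discr = \delta \cdot (\text{carr\'e})$'' as a Pell-type equation in $x = \Tr(HA)/\sqrt{s}$. Your version is more explicit than the paper's terse argument, in particular supplying the choice $y=1$, $t = sx^2 \pm 4\Det(A)$ and the $p$-adic verification that $x \mid \alpha$ for the direction $\Rightarrow$, which the paper leaves implicit.
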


\begin{rem}
	Il est facile de d\'eterminer dans ce corollaire si c'est $BAC\transpA$ ou bien $BA\transp{C}\transpA $ dont le corps est $\Q[\sqrt{\delta}]$.
	Supposons que $B$ et $C$ s'\'ecrivent respectivement $N_B+\epsilon_BS_0$ et $N_C+\epsilon_CS_0$ pour des matrices $N_B$ et $N_C$ sym\'etriques de rang $1$.
	\begin{itemize}
		\item	Si l'entier $\cfrac{1}{\sqrt{s}} \Tr(HA)$ est solution $x$ de l'\'equation de Pell-Fermat $s x^2 - t y^2 = 4\epsilon_B\epsilon_C\Det(A)$, alors la matrice $BAC\transpA$ a pour corps $\Q[\sqrt{\delta}]$.
		\item	Si l'entier $\cfrac{1}{\sqrt{s}} \Tr(HA)$ est solution $x$ de l'\'equation de Pell-Fermat $s x^2 - t y^2 = -4\epsilon_B\epsilon_C\Det(A)$, alors la matrice $BA\transp{C}\transpA$ a pour corps $\Q[\sqrt{\delta}]$.
	\end{itemize}
\end{rem}

Dans la suite, nous nous int\'eressons surtout au cas particulier o\`u $\Tr(HA)$ est solution $x$ enti\`ere de l'\'equation $x^2 - \delta y^2 = \pm 4$. 
Le cas g\'en\'eral est plus compliqu\'e, car les ensembles de solutions des \'equations de Pell-Fermat plus g\'en\'erales $s x^2 - t y^2 = \pm 4$ n'ont pas une structure aussi simple que pour l'\'equation classique o\`u $s=1$.
On retombe quand m\^eme sur une \'equation de Pell-Fermat classique dans le cas o\`u $s = \delta$ (voir les suites de type Wilson \`a la fin du chapitre~\ref{suites2}).

\begin{proof}[Preuve du corollaire]
	On a pour toute matrice $A \in M_2(\R)$, $\Tr(BAC\transpA) = \Tr(HA)^2 \pm 4\Det(A)$, et on peut choisir le signe devant le terme $4\Det(A)$ quitte \`a transposer $C$.
	On a alors $\discr(BAC\transpA) = \Tr^2(HA)(\Tr^2(HA) \pm 4\Det(A)) = s x^2 (s x^2 \pm 4)$ o\`u $x = \cfrac{1}{\sqrt{s}} \Tr(HA)$.
	Le corps de $BAC\transpA$ est donc $\Q[\sqrt{\delta}]$ si et seulement si $s x^2 (s x^2 \pm 4)$ est de la forme $\delta\alpha^2$, si et seulement si $s x^2 - t \alpha^2 = \mp 4$ pour un entier $t$ tel que $st$ est de la forme $\delta k^2$.
\end{proof}


\begin{conj} \label{maconj}
	Dans tout corps quadratique r\'eel, il existe une infinit\'e de fractions continues p\'eriodiques de la forme $ [ \overline{ 2,1,1,1,\pat{A},2,1,1,1,\pattranspA } ] $ ou de la forme  $ [ \overline{ 2,1,1,1,\pat{A},1,1,1,2,\pattranspA } ] $ form\'ees seulement des entiers 1 et 2.
\end{conj}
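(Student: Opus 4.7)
First, I would translate the conjecture into matrix form. A periodic continued fraction $[\overline{2,1,1,1,\pat{A},2,1,1,1,\pattranspA}]$ corresponds to the positive matrix $BAC\transp{A}$ with $B=C=\m{(2,1,1,1)}$ and $A=\m{\pat{A}}\in\Gamma_2\setminus\{I_2\}$; the second shape $[\overline{2,1,1,1,\pat{A},1,1,1,2,\pattranspA}]$ corresponds to the same $B$ but with $C=\m{(1,1,1,2)}=\transp{B}$. A direct computation gives
\[ B - S_0 = \begin{pmatrix} 2 & 4 \\ 4 & 8 \end{pmatrix}, \]
a symmetric matrix of rank one, and in each case either $C-S_0$ or $C+S_0$ equals this same rank-one matrix. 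Hence, by Proposition~\ref{B+tB}, both $B+\transp{B}$ and $C+\transp{C}$ are of rank one, and Proposition~\ref{FQ} applies.

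Applying Proposition~\ref{FQ} together with Remark~\ref{rqFQ} one obtains, in both cases, the explicit rank-one matrix
\[ H = \begin{pmatrix} 2 & 4 \\ 4 & 8 \end{pmatrix}, \]
such that for every $A\in M_2(\R)$,
\[ \discr(BAC\transp{A}) \;=\; \Tr(HA)^2\,\bigl(\Tr(HA)^2 \pm 4\Det(A)\bigr), \]
with a sign depending on which of the two shapes is considered. If $A\in\Gamma_2\setminus\{I_2\}$ has entries $a,b,c,d$, then $\Tr(HA)=2\,u(A)$ with
\[ u(A) := a+2b+2c+4d, \]
and $\Det(A)=\pm 1$, whence $\discr(BAC\transp{A})=16\,u(A)^2\,(u(A)^2\pm 1)$. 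The matrix $BAC\transp{A}$ therefore lies in $\Q[\sqrt{\delta}]$ if and only if $u(A)^2\pm 1=\delta\,v^2$ for some integer $v$, i.e.\ if and only if $u(A)$ is the $x$-component of a solution of the Pell--Fermat equation $x^2-\delta\,y^2=\pm 1$. The conjecture is thus equivalent to the arithmetic statement: for every non-square $\delta>0$, the linear functional $u:\Gamma_2\setminus\{I_2\}\to\Z_{>0}$ takes infinitely many values that are positive Pell solutions for $\delta$.

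The combinatorial-algebraic half of the argument is essentially automatic once Sections~\ref{pos} and~\ref{BACtA} are available; the genuine difficulty lies in this last, purely arithmetic, step. It has exactly the same flavour as the Zaremba conjecture~\ref{conjZar}: one must control the image of a fixed linear functional on the thin semigroup $\Gamma_2$ and show that it meets a sparse arithmetically defined set (here, the $x$-components of Pell solutions). A natural plan would combine (i)~a local analysis removing every congruence obstruction to the equation $u(A)^2-\delta\,v^2=\pm 1$ with $A\in\Gamma_2$---producing at least one suitable $A$ for each $\delta$, which is essentially Conjecture~\ref{conj12}, verified numerically by the author for $\delta<127$---and (ii)~an expander / spectral-gap argument on $\Gamma_2$ in the Bourgain--Kontorovich style to upgrade this existence into infinitude. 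Step~(ii) is the principal obstacle: it seems to require thin-orbit techniques at least as strong as those underlying the current state of the art on Zaremba's conjecture itself, and it is for this reason that the statement is phrased only as a conjecture.
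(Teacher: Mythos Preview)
Your analysis is correct and matches the paper's own treatment. This statement is a \emph{conjecture}: the paper does not prove it, but in the remark immediately following it the author gives exactly the reformulation you found --- namely that it suffices, for every non-square $\delta$, to produce infinitely many $A\in\Gamma_2$ with $\Tr(HA)$ an $x$-solution of the Pell--Fermat equation $x^2-\delta y^2=\pm4$, where $H=\begin{pmatrix}2&4\\4&8\end{pmatrix}$. Your version, with $u(A)=\tfrac12\Tr(HA)$ and the equation $u(A)^2-\delta v^2=\pm1$, is equivalent to this. Your closing discussion (local obstructions plus a Bourgain--Kontorovich type argument) goes a little beyond what the paper says in that remark, but is consistent with the spirit of Section~\ref{s_zaremba}; the paper likewise leaves the genuine arithmetic step open.
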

Ici, $\pat{A}$ est un motif $a_1, a_2, ... a_k$, avec $a_i \in \{1,2\}$, et $\pattranspA$ est le motif miroir. 

\begin{rem}
	Pour obtenir cette conjecture, il suffirait, \'etant donn\'e un entier $\delta$ non carr\'e, de trouver une infinit\'e de matrices A dans $\Gamma_2$ telles que $\Tr(HA)$ soit solution $x$ enti\`ere d'une \'equation de Pell-Fermat $x^2-\delta y^2 = \pm 4$, o\`u $H$ est la matrice $\left(
	\begin{array}{cc}
	2 &4\\
	4 &8
	\end{array}
	\right)$.
	
	La conjecture~\ref{conjMcMullen} de McMullen revient \`a trouver, pour tout $\delta$ non carr\'e, une infinit\'e de matrices $A$ dans $\Gamma_2$ telles que $\Tr(A)$ est solution $x$ de l'\'equation de Pell-Fermat $x^2-\delta y^2 = 4\Det(A)$.
	
	La conjecture~\ref{maconj} se ram\`ene donc approximativement \`a la conjecture~\ref{conjMcMullen} dans laquelle on aurait remplac\'e la $trace$ par la forme lin\'eaire $A \mapsto \Tr(HA)$. \\
	
	Il est malheureusement impossible de recommencer tel quel ce proc\'ed\'e qui nous a permis de passer de la trace \`a la forme lin\'eaire $A \mapsto \Tr(HA)$ : Si l'on a $\Tr(HBAC \transpA D) = (g(A))^2 + \lambda \Det(A)$ pour toute matrice $A \in M_2(\R)$, alors on a $\lambda = 0$, quelles que soient les matrices $B, C, D \in GL_2(\Z)$ et $g \in M_2(\R)^*$, et donc cela ne donne plus de factorisation du discriminant par un carr\'e.
\end{rem}

\begin{rem}
	La conjecture \ref{maconj} est \og presque\fg\ conséquence de celle de Zaremba avec une borne $2$. Voir section \ref{s_zaremba} pour plus de détails.
\end{rem}

\section{Fractions continues de la forme $[ \overline{ \pat{B}\pat{A}^n\pat{C}\pattranspA^n } ]$} \label{suites2}



Dans ce chapitre, nous donnons une fa\c{c}on de construire des suites de fractions continues p\'eriodiques qui sont dans un corps quadratique donn\'e, et qui correspondent \`a des suites de matrices de la forme $BA^nC\transpA^n$ avec $B+\transp{B}$ et $C+\transp{C}$ de rang $1$. 


\subsection{Hypoth\`ese H enti\`ere}
On supposera que $B$ et $C$ sont deux matrices de $\Gamma$ telles que $B+\transp{B}$ et $C+\transp{C}$ sont de rang $1$.
Et on supposera que la matrice $H$ donn\'ee par le corollaire~\ref{H} (pour ces matrices $B$ et $C$) est \`a coefficients entiers. \\

La proposition suivante justifie que pour une matrice positive $A$ fix\'ee on cherche \`a obtenir une relation de la forme $\Tr(HA^n) = \Tr(A^{n+k})$ pour obtenir une suite de matrices $BA^nC\transpA^n$ qui ont toutes le m\^eme corps. Dans le cas o\`u $A$ est une matrice donnée par le lemme \ref{pell}, et sous l'hypoth\`ese $H$ enti\`ere, cela est n\'ecessaire.

\begin{prop} \label{Hunit}
	Soit $\Q[\sqrt{\delta}]$ un corps quadratique r\'eel.
	Sous l'hypoth\`ese $H$ enti\`ere les deux assertions suivantes sont \'equivalentes:
	\begin{enumerate}
		\item	Pour tout $n \in \Z$, le corps de $BA^nC\transpA^n$ ou bien de $BA^n\transp{C}\transpA^n$ est $\Q[\sqrt{\delta}]$, \label{Hunit1}
		\item Il existe une matrice $U \in GL_2(\Z)$ de corps $\Q[\sqrt{\delta}]$ et deux entiers $a$ et $b$ tels que $\Tr(HA^n) = \Tr(U^{an+b})$ pour tout entier $n \in \Z$. \label{Hunit2}
	\end{enumerate}
	Et le corps de $A$ est alors n\'ecessairement $\Q[\sqrt{\delta}]$.
\end{prop}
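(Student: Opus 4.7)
The plan is to combine Proposition~\ref{FQ}, which expresses the discriminant of $BAC\transpA$ in terms of $\Tr(HA)$, with the Pell--Fermat analysis from Lemmas~\ref{pell},~\ref{arithm} and~\ref{infini-toujours}. Applying Proposition~\ref{FQ} with $A^n$ in place of $A$, we have
\[ \discr(BA^nC\transpA^n) = \Tr^2(HA^n)\bigl(\Tr^2(HA^n) \pm 4\Det(A^n)\bigr), \]
and replacing $C$ by $\transp{C}$ flips the sign in front of $4\Det(A^n)$ (by Remark~\ref{rqFQ}, since transposition flips $\epsilon_C$). Since $\Det(A^n)=\pm1$ and, by the hypothesis that $H$ is integer, $\Tr(HA^n)\in\Z$, the condition \ref{Hunit1} translates exactly into: for every $n\in\Z$, the integer $\Tr(HA^n)$ is a solution $x$ of one of the Pell--Fermat equations $x^2-\delta y^2 = \pm 4$.

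For $\ref{Hunit1}\Rightarrow\ref{Hunit2}$, Lemma~\ref{pell} provides a matrix $U\in GL_2(\Z)$ of field $\Q[\sqrt{\delta}]$ and integers $i_n$ such that $\Tr(HA^n)=\Tr(U^{i_n})$ for every $n$. For $n$ large enough we have $\Tr(HA^n)\neq 0$: indeed $A$ is positive, so has eigenvalues $\lambda,\mu$ with $|\lambda|>1>|\mu|$ and $\Tr(HA^n)=e\lambda^n+f\mu^n$, and $e=0$ would force $\Tr(HA^n)$ itself to tend to $0$, contradicting its being a nonzero integer. Lemma~\ref{arithm} then yields that $(i_n)$ is arithmetic, $i_n=an+b$. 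Lemma~\ref{infini-toujours} extends the identity $\Tr(HA^n)=\Tr(U^{an+b})$ to every $n\in\Z$ and, simultaneously, shows that $A$ and $U^a$ have the same eigenvalues, so the field of $A$ is $\Q[\sqrt{\delta}]$ as announced.

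For the converse $\ref{Hunit2}\Rightarrow\ref{Hunit1}$, if $\Tr(HA^n)=\Tr(U^{an+b})$ with $U\in GL_2(\Z)$ of field $\Q[\sqrt{\delta}]$, then by (the direct direction of) Lemma~\ref{pell} this trace is a solution of $x^2-\delta y^2=\pm 4$, so $\Tr^2(HA^n)\pm 4 = \delta\alpha_n^2$ for some $\alpha_n\in\Z$. Substituting into the discriminant formula above yields
\[ \discr\bigl(BA^nC\transpA^n\bigr) \;=\; \delta\bigl(\Tr(HA^n)\,\alpha_n\bigr)^2 \]
for the appropriate choice between $C$ and $\transp{C}$, proving that the corresponding field is $\Q[\sqrt{\delta}]$.

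The main technical point is the bookkeeping of signs: one must match, for each $n$, the sign of $\Det(A^n)=(\Det A)^n$, the sign in the bracket $\Tr^2(HA^n)\pm 4\Det(A^n)$ (coming from $\epsilon_B\epsilon_C$), and the sign $\pm 4$ in the Pell--Fermat equation, in order to decide between $BA^nC\transpA^n$ and $BA^n\transp{C}\transpA^n$ and to invoke Lemma~\ref{pell} correctly in both directions. Once this is done, the argument is a direct application of the four lemmas, and the additional conclusion that the field of $A$ must be $\Q[\sqrt{\delta}]$ falls out of Lemma~\ref{infini-toujours} for free.
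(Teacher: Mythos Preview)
Your argument is correct and follows essentially the same route as the paper: Proposition~\ref{FQ} to reduce to $\Tr(HA^n)$, then Lemmas~\ref{pell}, \ref{arithm} and \ref{infini-toujours} for the forward direction, and the Pell--Fermat identity for the converse. One small inaccuracy of attribution: in the direction $\ref{Hunit2}\Rightarrow\ref{Hunit1}$ you cite Lemma~\ref{pell}, but that lemma produces a \emph{specific} $U$, whereas here $U$ is an arbitrary matrix of $GL_2(\Z)$ with field $\Q[\sqrt{\delta}]$; the paper instead uses directly that $\discr(U^{an+b})=\Tr(U^{an+b})^2-4\Det(U^{an+b})$ is $\delta$ times a square, which is the elementary step you actually need.
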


\begin{rem} \label{rqHunit}
Supposons que l'on ait $B = N_B+\epsilon_BS_0$ et $C = N_C+\epsilon_CS_0$, pour des matrices $N_B$ et $N_C$ sym\'etriques de rang $1$, et pour des $\epsilon_B, \epsilon_C \in \{ -1, 1\}$.
Si l'on a $\Tr(HA^n) = \Tr(U^{an+b})$ pour tout entier $n \in \Z$, comme dans la proposition~\ref{Hunit}, alors
\begin{itemize}
	\item	la matrice $BA^nC\transpA^n$ a pour corps $\Q[\sqrt{\delta}]$ \quad si $\epsilon_B\epsilon_C = \Det(U^b)$,
	\item la matrice $BA^n\transp{C}\transpA^n$ a pour corps $\Q[\sqrt{\delta}]$ \quad sinon,
\end{itemize}
pour tout entier n.
\end{rem}

\begin{proof}[Preuve de la proposition~\ref{Hunit}]
	$\underline{\Longleftarrow}$
	En utilisant la proposition~\ref{FQ}, on obtient que
	\[ \discr(BA^nC\transpA^n) = \Tr^2(HA^n)(\Tr^2(HA^n) + 4 \epsilon) \]
	pour un $\epsilon \in \{-1, 1\}$, et on peut choisir $\epsilon$ comme l'on veut quitte \`a transposer $C$.
	
	L'entier $\Tr(U^{an+b})$ est solution $x$ d'une \'equation de Pell-Fermat $x^2-\delta y^2 = \pm4$. En effet, on a d'une part que le discriminant de $U^{an+b}$ est de la forme $\discr(U^{an+b}) = \delta y^2$ puisque le corps de $U$ est $\Q[\sqrt{\delta}]$, et on a d'autre part les \'egalit\'es $\discr(U^{an+b}) = \Tr(U^{an+b})^2 - 4\Det(U^{an+b})$, et $\Det(U^{an+b}) = \pm 1$.
	
	On obtient donc, quitte \`a transposer $C$, que $\discr(BA^nC\transpA^n)$ est de la forme $\delta z^2$, donc que le corps de $BA^nC\transpA^n$ est $\Q[\sqrt{\delta}]$.
	
	$\underline{\Longrightarrow}$
	Pour d\'emontrer cette implication, on reprend les choses qui ont \'et\'e faites pour \'etudier les suites de la forme $AB^n$ (voir chapitre~\ref{suites1}).
	Soit $U$ la matrice donn\'ee par le lemme~\ref{pell}.
	On a que $\Tr(HA^n)$ est solution $x$ d'une \'equation de Pell-Fermat $x^2-\delta y^2 = \pm 4$, donc d'apr\`es le lemme~\ref{pell}, il existe une suite d'entiers $(i_n)$ telle que pour tout $n$, on ait $\Tr(HA^n) = \Tr(U^{i_n})$.
	On est alors exactement dans la situation du lemme~\ref{arithm}, et on obtient alors que la suite $(i_n)$ est arithm\'etique : il existe donc deux entiers $a$ et $b$ tels que pour tout $n$, $i_n = an+b$.
	D'apr\`es le lemme~\ref{infini-toujours}, on a de plus que les matrices $A$ et $U^a$ sont semblables (i.e. ont m\^emes valeurs propres), donc le corps de $A$ est $\Q[\sqrt{\delta}]$.
\end{proof}



La proposition suivante permet de ramener la recherche de matrices non inversibles  et \`a coefficients entiers $H$ telles que $\Tr(HA)$ est solution $x$ enti\`ere de l'\'equation de Pell-Fermat $x^2-\delta y^2 = \pm4$ ($A$ \'etant fix\'ee), \`a la recherche de matrices $S$ v\'erifiant certaines propri\'et\'es plus simples :

\begin{prop} \label{HS}
	Soit $A$ une matrice positive, et $b$ un entier.
	Les assertions suivantes sont \'equivalentes :
	\begin{enumerate}
		\item	Il existe une matrice $H \in M_2(\Z)$ de rang 1, telle que pour tout $n \in \Z$,
		\[
			\Tr(HA^n) = \Tr(A^{n+b}).
		\]
		\item	Il existe une matrice $S \in GL_2(\Z)$ telle que $$\Det(S) = -\Det(A^b), \ \Tr(S) = 0 \  \mathrm{et} \  \Tr(SA) = 0.$$
	\end{enumerate}
\end{prop}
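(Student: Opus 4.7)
The plan is to use the substitution $S := A^b - H$, equivalently $H := A^b - S$. Under this substitution, the trace identity in (1) becomes $\Tr(SA^n) = 0$ for all $n \in \Z$, and the arithmetic conditions in (2) will be recognized as the cases $n = 0$ and $n = 1$ of this identity. Going in both directions, the computation will rely on (a) the two-dimensional Cayley-Hamilton identity, both in the form $A^{n+1} = \Tr(A)A^n - \Det(A)A^{n-1}$ and in the form $\Tr(M)^2 - \Tr(M^2) = 2\Det(M)$, and (b) the determinant-of-a-sum formula~(\ref{fm}).

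For the implication $(1) \Rightarrow (2)$, I set $S := A^b - H \in M_2(\Z)$. The cases $n = 0, 1$ of the trace identity give $\Tr(S) = 0$ and $\Tr(SA) = 0$. To compute $\Det(S)$, I apply~(\ref{fm}) with $P = A^b$ and $Q = -H$: using $\Det(H) = 0$ (rank $1$) and $(-H)^{\dagger} = H - \Tr(H)I_2$, one obtains
\[ \Det(S) = \Det(A^b) - \Tr(H)\Tr(A^b) + \Tr(HA^b). \]
The cases $n = 0$ and $n = b$ of the trace identity give $\Tr(H) = \Tr(A^b)$ and $\Tr(HA^b) = \Tr(A^{2b})$, so the right-hand side becomes $\Det(A^b) - (\Tr(A^b)^2 - \Tr(A^{2b})) = \Det(A^b) - 2\Det(A^b) = -\Det(A^b)$ by Cayley-Hamilton. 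Since $\Det(A^b) = \pm 1$, this forces $S \in GL_2(\Z)$.

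For the converse $(2) \Rightarrow (1)$, I set $H := A^b - S \in M_2(\Z)$. Cayley-Hamilton gives the recursion $\Tr(SA^{n+1}) = \Tr(A)\Tr(SA^n) - \Det(A)\Tr(SA^{n-1})$, so a two-step induction from the base cases $\Tr(S) = 0$ and $\Tr(SA) = 0$ gives $\Tr(SA^n) = 0$ for all $n \geq 0$; the same identity extends to $n < 0$ using $A^{-1} = \Det(A)^{-1}(\Tr(A)I_2 - A)$. This is exactly the trace identity of (1). For the rank condition, I apply~(\ref{fm}) once more: since $\Tr(S) = 0$ one has $S^{\dagger} = -S$, so
\[ \Det(H) = \Det(A^b) + \Det(-S) + \Tr(A^b S) = \Det(A^b) + \Det(S) + \Tr(SA^b) = 0, \]
by the hypothesis $\Det(S) = -\Det(A^b)$ and the vanishing $\Tr(SA^b) = 0$ just established. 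Finally, $H \neq 0$: if $H = 0$ then $S = A^b$, forcing $\Tr(A^b) = 0$, impossible since for a positive matrix $A$ the eigenvalues satisfy $\lambda > 1 > |\mu|$ with $\lambda\mu = \pm 1$, so $\lambda^b + \mu^b \neq 0$ for every $b \in \Z$.

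The argument is essentially a direct computation once the substitution $S = A^b - H$ is in hand; the only mildly delicate bookkeeping is the sign-tracking when passing between $H$ and $-H$ (resp. $S$ and $-S$) inside the $\dagger$ operator in~(\ref{fm}), and the observation that $S^{\dagger} = -S$ precisely because $\Tr(S) = 0$.
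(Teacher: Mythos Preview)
Your proof is correct and follows the same substitution $S = \pm(A^b - H)$ as the paper. The one genuine difference is in the computation of $\Det(S)$ in the direction $(1)\Rightarrow(2)$: the paper diagonalizes $A$ and reads off that, in an eigenbasis, $H$ has diagonal $(\lambda^b,\bar\lambda^b)$, so $S = H - A^b$ is off-diagonal there and $\Det(S) = -\Det(A^b)$ follows from $\Det(H)=0$; you instead stay coordinate-free and combine formula~(\ref{fm}) with the Cayley--Hamilton identity $\Tr(A^b)^2 - \Tr(A^{2b}) = 2\Det(A^b)$. Your route avoids passing through the quadratic extension and is a nice self-contained computation. In the other direction the two arguments coincide (the paper invokes Lemme~\ref{2-toujours} where you redo the Cayley--Hamilton recursion), and your explicit check that $H\neq 0$ via $\Tr(A^b)\neq 0$ fills in a point the paper leaves implicit.
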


\begin{proof}
	$\underline{\Longrightarrow}$
	Soit $S = H - A^b$. On a alors bien $\Tr(S) = 0$ et $\Tr(SA) = 0$. \\
	Montrons que $S \in GL_2(\Z)$.
	Si l'on diagonalise $A$, on voit que la condition
	\[
		\Tr(HA^n) = \Tr(A^{n+b})
	\]
	pour tout entier $n \in \Z$, entra\^ine que dans la base de diagonalisation, $H$ est de la forme : $\left(
	\begin{array}{cc}
	\lambda^b &*\\
	* &\bar{\lambda}^b
	\end{array}
	\right)$, o\`u $\lambda$ et $\bar{\lambda}$ sont les deux valeurs propres de $A$.
	Comme $H$ est non inversible, on obtient l'\'egalit\'e des d\'eterminants $\Det(S) = -\Det(A^b)$. Et comme $S$ est \`a coefficients entiers, on a bien $S \in GL_2(\Z)$.
	
	$\underline{\Longleftarrow}$
	Les \'egalit\'es $\Tr(S) = 0$ et $\Tr(SA) = 0$ entra\^inent d'apr\`es le lemme~\ref{2-toujours}, l'\'egalit\'e $\Tr(SA^n) = 0$ pour tout entier $n$.
	Et par la formule \ref{fm} (page \pageref{fm}) on a
	\[ \Det(S + A^b) = \Det(S) + \Det(A^b) + \Tr(S{A^b}^{\dagger}), \]
	o\`u $M^{\dagger} = \Det(M)M^{-1}$.
	Or on a $\Tr(SA^{-b}) = 0$ et $\Det(S) = -\Det(A^b)$, donc $S + A^b$ est de d\'eterminant 0.
	Finalement $H = S + A^b$ convient.
\end{proof}

\begin{cor} \label{S->suite}
	Soit $A$ une matrice positive, et soit $S$ une matrice de $GL_2(\Z)$ v\'erifiant les conditions $\Tr(S) = \Tr(SA) = 0$ et aussi $\Det(S) = -1$ si $\Det(A) = 1$.
	Alors il existe deux matrices $B$ et $C$ de $\Gamma$ telles que les rangs de $B+\transp{B}$ et de $C+\transp{C}$ sont 1, et telles que les corps des matrices $BA^nC\transpA^n$ sont tous les m\^emes.
\end{cor}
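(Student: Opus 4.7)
The plan is to combine Proposition \ref{HS}, which converts the algebraic data of $S$ into a rank-$1$ integer matrix $H$ satisfying a trace identity, with the geometric construction from the proof of Theorem \ref{MN2}, which assembles an admissible pair $(B,C)$ from such an $H$.

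I first choose an integer $b$ with $\Det(A^b) = -\Det(S)$. Such a $b$ exists by hypothesis: if $\Det(A)=1$ then $\Det(S)=-1$ is imposed and every $b$ works; if $\Det(A)=-1$ one picks $b$ of the appropriate parity according to the sign of $\Det(S)$. Proposition \ref{HS} then produces an integer rank-$1$ matrix $H_0 := S + A^b$ with $\Tr(H_0 A^n) = \Tr(A^{n+b})$ for all $n \in \Z$. I next replace $H_0$ by $H := A^k H_0 A^k$ for $k$ large; the identity becomes $\Tr(HA^n) = \Tr(A^{n+b+2k})$, and the entrywise argument of Lemma \ref{Hk3} shows that for $k$ sufficiently large $H$ satisfies the hypotheses of Proposition \ref{H->Gamma}. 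I can therefore write
\[
  H = F\,\m{i}\begin{pmatrix} 0 & 0 \\ 0 & e \end{pmatrix}\m{j}\,G
\]
with $F, G \in \Gamma$ and $e \geq 1$. Using the equivalence relations of Proposition \ref{H->Gamma} to absorb any $\m{1}$ adjacent to the central block into $F$ or $G$, and using that the outer $A^k$ factors lie in $\Gamma \setminus \{I_2, \m{1}\}$ for $k \geq 1$, I arrange $i, j \geq 2$, exactly as at the end of the proof of Lemma \ref{Hk3}.

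Following the construction from the proof of Theorem \ref{MN2}, I then set
\[
  B := \transp{G}\,\m{(j-1,1,e-1,j)}\,G, \qquad C := F\,\m{(i-1,1,e-1,i)}\,\transp{F}
\]
(with the dual patterns when $\Det(F)$ or $\Det(G)$ equals $-1$). The lemma immediately following that construction gives $B, C \in \Gamma$ and exhibits each as $N \pm S_0$ with $N$ symmetric of rank $1$, so $B + \transp{B}$ and $C + \transp{C}$ are of rank $1$. By Corollary \ref{H} the rank-$1$ matrix attached to $(B, C)$ by Proposition \ref{FQ} is precisely our $H$, and in particular $H$ has integer entries; hence the hypothesis ``$H$ enti\`ere'' needed by Proposition \ref{Hunit} is satisfied.

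The trace identity $\Tr(HA^n) = \Tr(A^{n+b+2k})$ places us under condition \ref{Hunit2} of Proposition \ref{Hunit} with $U = A$, $a = 1$. Consequently, for every $n$, either $BA^n C \transpA^n$ or $BA^n \transp{C} \transpA^n$ has field equal to the field of $A$. Since $\transp{C} \in \Gamma$ and the rank-$1$ property of $C + \transp{C}$ is preserved under transposition, replacing $C$ by $\transp{C}$ if necessary ensures that the sequence $BA^n C \transpA^n$ itself has constant quadratic field. The main technical obstacle is the second step: producing a decomposition of $H$ with $i, j \geq 2$ together with the positivity inequalities required by Proposition \ref{H->Gamma}; both points are handled by the substitution $H_0 \mapsto A^k H_0 A^k$ for $k$ large, in exact analogy with Lemma \ref{Hk3}.
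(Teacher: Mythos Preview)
Your proposal is correct and follows the same path as the paper: invoke Proposition~\ref{HS} to produce a rank-$1$ integer matrix $H$ with $\Tr(HA^{n})=\Tr(A^{n+\text{const}})$, push $H$ into the positivity range needed by Proposition~\ref{H->Gamma}, and then read off $B$ and $C$ from the resulting decomposition via Corollary~\ref{H} (you spell this last step out explicitly in the style of the proof of Theorem~\ref{MN2}, while the paper just cites the corollary; and you close with Proposition~\ref{Hunit} where the paper computes the discriminant directly).

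The only point worth flagging is how you enlarge the exponent. The paper simply takes the integer $b$ in Proposition~\ref{HS} large (of the admissible parity), so that $H=S+A^{b}$ is a bounded perturbation of the large positive matrix $A^{b}$ and the positivity inequalities $0\le a<b\le d$, $a<c\le d$ follow at once. You instead fix a minimal $b$ and pass to $A^{k}H_{0}A^{k}=A^{k}SA^{k}+A^{2k+b}$. This also works, but your appeal to ``the entrywise argument of Lemma~\ref{Hk3}'' hides one extra step: in Lemma~\ref{Hk3} the perturbation $MS_{0}$ is a fixed matrix, whereas your perturbation $A^{k}SA^{k}$ varies with $k$ and must be shown to stay bounded. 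It does, because the conditions $\Tr(S)=\Tr(SA)=0$ force $S$ to be anti-diagonal in the eigenbasis of $A$, so that in that basis $A^{k}SA^{k}$ has off-diagonal entries of modulus $|\det A|^{k}=1$. With that observation your variant goes through; the paper's choice just sidesteps the issue.
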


\begin{proof}[Preuve du corollaire]
	L'existence d'une telle matrice $S$ nous donne, par la proposition~\ref{HS} l'existence d'une matrice $H \in M_2(\Z)$ non inversible telle que pour tout entier $n$, $\Tr(HA^n) = \Tr(A^{n+k})$ (pour un $k$ pair si $\Det(S) = \Det(A) = -1$, $k$ impair si $\Det(S) = 1$ et $k$ quelconque sinon). Quitte \`a prendre $k$ assez grand, on peut supposer que les in\'egalit\'es $0 \leq a < b \leq d$ et $a < c \leq d$ sont satisfaites, o\`u $\left(
	\begin{array}{cc}
	a &b\\
	c &d
	\end{array}
	\right) = H$, et donc la proposition~\ref{H->Gamma} nous donne que H s'\'ecrit : $H = X \left(
	\begin{array}{cc}
	0 &0\\
	0 &e
	\end{array}
	\right) Y$, avec $X, Y \in \Gamma$, $e \geq 1$.
	Et comme on a  $a < b$ et $a < c$, on obtient que $X$ et $Y$ sont chacun dans $\Gamma\backslash \{ I_2, \m{1} \}$.
	On peut donc utiliser les relations donn\'ees dans la proposition~\ref{H->Gamma}, pour se ramener \`a $X$ et $Y$ de la forme : $X = X'\m{i}$ et $Y = \m{j}Y'$, avec $i,j \geq 2$ (et on a bien $e \geq 1$).
	D'apr\`es le corollaire~\ref{H}, il existe donc deux matrices positives $B$ et $C$ telles que les rangs de $B+\transp{B}$ et de $C+\transp{C}$ sont $1$ et telles que pour toute matrice $M \in M_2(\R)$ on ait $\Tr(BMC\transp{M}) = (\Tr(HM))^2 - 2\Det(MA^{k})$.
	On a alors $\discr(BA^nC\transpA^n) = \Tr^2(A^{n+k})(\Tr^2(A^{n+k}) - 4\Det(A^{n+k})) = \Tr^2(A^{n+k})\discr(A^{n+k})$, donc pour tout $n$, le corps de $BA^nC\transpA^n$ est le corps de $A$.
\end{proof}

\begin{rem}
	Soient $A, B$ et $C$ trois matrices de $\Gamma$.
	Sous les trois hypoth\`eses 
	\begin{enumerate}
		\item	que $B+\transp{B}$ et $C+\transp{C}$ sont de rang $1$,
		\item	que la matrice $H$ donn\'ee par la proposition~\ref{B+tB} et le corollaire~\ref{H} est enti\`ere,
		\item que l'on peut prendre $U=A$ dans l'\'egalit\'e $\Tr(HA^n) = \Tr(U^{an+b})$ donn\'ee par la proposition~\ref{Hunit},
	\end{enumerate}
	ceci fournit de mani\`ere exhaustive les suites de matrices $BA^nC\transpA^n$ qui ont toutes le m\^eme corps.
	
	La derni\`ere hypoth\`ese est automatiquement satisfaite si $A$ est une matrice donnée par le lemme \ref{pell} (par exemple si $A = \m{1}$), et la premi\`ere hypoth\`ese est v\'erifi\'ee pour tous les exemples connus.
	Il reste des suites infinies \`a \'etudier en retirant l'hypoth\`ese $H$ enti\`ere.
\end{rem}

A l'aide de ce corollaire~\ref{S->suite}, on peut red\'emontrer rapidement le th\'eor\`eme~\ref{MN2} :

\begin{proof}[Preuve du th\'eor\`eme~\ref{MN2}]
	Quitte \`a tout transposer, on peut supposer que c'est $N$ qui est de d\'eterminant $-1$.
	La matrice $S := \left(
	\begin{array}{cc}
	0 &-1\\
	1 &0
	\end{array}
	\right) N$ v\'erifie alors $\Tr(S) = \Tr(SMN) = 0$ (parce que $NMN$ est sym\'etrique), $\Det(S) = -1$ et $S \in GL_2(\Z)$, donc le corollaire~\ref{S->suite} permet de conclure.
\end{proof}

Voici maintenant une preuve plus longue, mais qui fait appara\^itre naturellement la condition sur la matrice $A$ pour que l'on ait une suite $BA^nC\transpA^n$ de matrices ayant toutes pour corps le corps de $A$.

\begin{proof}[Preuve du th\'eor\`eme~\ref{MN2}]
	\'Etant fix\'ee une matrice positive $A$, on cherche une matrice $S$ qui v\'erifie les hypoth\`eses du corollaire~\ref{S->suite}.
	Les condition $\Tr(S) = \Tr(SA) = 0$ imposent de chercher $S$ sous la forme : $S = \left(
	\begin{array}{cc}
	x &y\\
	\cfrac{x(d-a)-cy}{b} &-x
	\end{array}
	\right)$ o\`u $A = \left(
	\begin{array}{cc}
	a &b\\
	c &d
	\end{array}
	\right)$, et on doit avoir $\Det(S) = \pm 1$, ce qui nous donne l'\'equation de Pell-Fermat : $$(2bx + (d-a)y)^2 - \discr(A)y^2 = \pm 4b^2$$
	
	On va maintenant montrer que l'on peut trouver $S$ qui s'exprime simplement en fonction de $A^n$.
	Si l'on d\'efinit les suites $(u_n)$ et $(v_n)$ par :
		$$\left\{
	          \begin{array}{ll}
	            u_0 = 1$ et $u_{n+1} = au_n + bcv_n \\
	            v_0 = 0$ et $v_{n+1} = u_n + dv_n \\
	          \end{array}
	        \right.$$
	alors on a $A^n = \left(
	\begin{array}{cc}
	u_n &bv_n\\
	cv_n &u_n+(d-a)v_n
	\end{array}
	\right)$, et comme on a $\Det(A^n) = \pm 1$, on obtient :
	$$ (2u_n + (d-a)v_n)^2 - \discr(A)v_n^2 = \pm 4 $$
	
	On voit donc que $(x,y) = (u_n, bv_n)$ fournit une solution.
	La derni\`ere chose \`a v\'erifier est que la matrice $S$ trouv\'ee est \`a coefficients entiers.
	Or, cela est le cas si et seulement si $b$ divise $d-a$, puisque $b$ divise $y$ et $b$ est premier \`a $x$.
	Or, les matrices $A \in \Gamma$ qui v\'erifient cette relation de divisibilit\'e sont exactement les matrices de la forme $\m{k}M$ avec $k \geq 1$ et $M \in \Gamma$ sym\'etrique. 
	Ainsi on a bien démontré le théorème~\ref{MN2}. Et le théorème~\ref{MN} lui est équivalent (voir remarque~\ref{rem_MN2}).
\end{proof}

\section{Exemples} \label{exs}

Dans ce chapitre, nous d\'ecrivons pr\'ecis\'ement les suites de fractions continues p\'eriodiques que donne la preuve du th\'eor\`eme~\ref{MN}, et nous donnons des exemples, que nous vérifions directement. \\

Etant donn\'es une matrice $A$ positive fix\'ee, un entier $k$ et une matrice $H$ non inversible telle que pour tout $n$, $\Tr(HA^n) = \Tr(A^{n+k})$, le corollaire~\ref{H} et les propositions~\ref{H->Gamma} et \ref{FQ} permettent de trouver toutes les matrices $B$ et $C$ telles que 
\[ \text{ pour toute matrice } M \in M_2(\R), \Tr(BMC\transp{M}) = \Tr(HM)^2 \pm 2 \Det(M). \]
Cela donne alors une suite de matrices $BA^nC\transpA^n$ ayant toutes le m\^eme corps.

On peut d\'eterminer explicitement quelles sont les matrices $H$ qui sont donn\'ees par la proposition~\ref{HS}, \`a partir des matrices $S$ choisies ci-dessus dans la preuve du th\'eor\`eme, pour $N = \m{i}$.
En prenant $A = M\m{i}$ pour $M \in \Gamma$ sym\'etrique, $b=2$ et $S = S_0\m{i}$ dans la proposition~\ref{HS}, on a $H = S_0\m{i} + M\m{i}M\m{i}$, donc on obtient :
\[ H = M(\Det(M)S_0 + \m{i})M\m{i}
= \left\{ \begin{array}{cl}
	MRM\m{i} & \text{si} \quad \Det(M) = 1 \\
	M\transp{R}M\m{i} & \text{si} \quad \Det(M) = -1
 \end{array} \right. \]
 o\`u $R := \left(
		\begin{array}{cc}
			0 &0\\
			2 &i\\
		\end{array}
		\right)$.

\noindent Et l'on peut expliciter la décomposition de la matrice $R$ donn\'ee par la proposition~\ref{H->Gamma} :

\begin{tabular}{ll}
	$R = \left(
		\begin{array}{cc}
			0 &0\\
			0 &2\\
		\end{array}
		\right) \m{\frac{i}{2}} \quad$ & si $i$ pair, \\
	$R = \left(
		\begin{array}{cc}
			0 &0\\
			0 &1\\
		\end{array}
		\right)\m{2}\m{\frac{i-1}{2}} \quad$ & si $i$ impair.
\end{tabular}

\noindent On choisit alors des matrices $B$ et $C$ de $\Gamma$ qui correspondent, par le corollaire~\ref{H}, aux matrices $H$ de rang 1 ci-dessus.
Par exemple, si $\Det(M) = 1$, si $i$ est pair, et si $M = \m{j }S\m{ j}$ pour une matrice sym\'etrique $S$ de $\Gamma$, on peut prendre :
\[
	\begin{tabular}{lll}
		$B = \m{(i, j )}S\m{(j, i/2-1, 1, 1, i/2, j )}S\m{ (j, i)}$, \\
		$C = \m{j}S\m{(j-1, 1, 1, j )}S\m{ j}$.
	\end{tabular}.
\]

\noindent On obtient finalement les suites de fractions continues p\'eriodiques suivantes :

\begin{prop} \label{suitesijMj}
	Soit $\pat{M} = (a_1, a_2, ... , a_2, a_1)$ un uplet sym\'etrique d'entiers strictement positifs et soient $i$ et $j$ deux entiers strictement positifs.
	
	Si $i$ est pair, alors pour tout entier $n$,
	$$ [ \overline{ i/2-1, 1, 1, i/2, i^n, i-1, 1, 1, i, i^n } ] \in \Q[\sqrt{i^2+4}]$$
	$$ [ \overline{ i/2-1,1,1,i/2,(j,i)^n,j-1,1,1,j,(i,j)^n } ] \in \Q[\sqrt{(ij)^2+4ij}]$$
	$$ [ \overline{ i/2-1, 1, 1, i/2, (j, \pat{M}, j, i)^n, j, \pat{M}, j-1, 1, 1, j, \pat{M}, j, (i, j , \pat{M}, j)^n } ]$$
	$$\in \Q[\sqrt{\discr(\m{(j, a_1, a_2, ..., a_2, a_1, j, i)})}]. $$ 
	
	Et si $i$ est impair, alors pour tout $n$,
	$$ [ \overline{ (i-1)/2, 1, 3, (i-1)/2, i^n, i+1, i-1, i^n } ] \in \Q[\sqrt{i^2+4}]$$
	$$ [ \overline{ (i-1)/2, 1, 3, (i-1)/2, (j, i)^n, j+1, j-1, (i, j)^n } ] \in \Q[\sqrt{(ij)^2+4ij}]$$
	$$ [ \overline{ (i-1)/2, 1, 3, (i-1)/2, (j, \pat{M}, j, i)^n, j, \pat{M}, j+1, j-1, \pat{M}, j, (i, j , \pat{M}, j)^n } ] $$
	$$\in \Q[\sqrt{\discr(\m{(j, a_1, a_2, ..., a_2, a_1, j, i)})}]. $$
	
	\noindent Et l'on obtient de vraies fractions continues p\'eriodiques m\^eme s'il y a des entiers nuls, en utilisant la relation $\m{(i, 0, j)} = \m{i+j}$.
\end{prop}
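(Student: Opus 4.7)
The plan is to specialize the construction presented immediately before the proposition to each of the three choices of the symmetric matrix $M \in \Gamma$ (namely $M = I_2$, $M = \m{j}$, and $M = \m{(j, a_1, a_2, \dots, a_2, a_1, j)}$) and to each parity of $i$, and then to translate the resulting matrix product $BA^nC\transpA^n$ back into a periodic continued fraction via Remark~\ref{rem_mat-fc}.

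First I would identify the quadratic field $\Q[\sqrt{\discr(A)}]$ attached to $A = M\m{i}$ in the three cases. For $M = I_2$ we have $A = \m{i}$ and $\discr(A) = i^2 + 4$. For $M = \m{j}$, which is symmetric with $\Det(M) = -1$, a direct multiplication of $A = \m{(j,i)}$ gives $\discr(A) = (ij)^2 + 4ij$. For $M = \m{(j, a_1, \dots, a_1, j)}$ the discriminant is read off from $A = \m{(j, a_1, \dots, a_1, j, i)}$, exactly as stated.

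Next I would compute $H$ via the formula $H = M(\Det(M)S_0 + \m{i})M\m{i}$ displayed in the preamble to the proposition, using the explicit factorisation of $R = \bigl(\begin{smallmatrix}0&0\\2&i\end{smallmatrix}\bigr)$ given there (which differs for $i$ even and $i$ odd). Substituting each choice of $M$ puts $H$ in the normal form $X\bigl(\begin{smallmatrix}0&0\\0&e\end{smallmatrix}\bigr)Y$ with $X, Y \in \Gamma$, so that Corollary~\ref{H} delivers matrices $B, C \in \Gamma$ with $B + \transp{B}$ and $C + \transp{C}$ of rank $1$. Proposition~\ref{FQ} combined with the built-in identity $\Tr(HA^n) = \Tr(A^{n+2})$ (which follows from $H = S_0 \m{i} + A^2$ by Lemma~\ref{tr-sym}) then gives $\discr(BA^nC\transpA^n) = \Tr^2(A^{n+2}) \discr(A^{n+2})$, so the field of $BA^nC\transpA^n$ coincides with that of $A$ for every $n$.

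Finally, the continued fraction expansion of $BA^nC\transpA^n$ is read off from the $T$-decomposition of the product via Remark~\ref{rem_mat-fc}, handling any vanishing middle entry with the identity $\m{(a, 0, b)} = \m{a+b}$. The main obstacle is purely combinatorial bookkeeping: one must verify that the concatenation of $T$-words for $B$, $A^n$, $C$ and $\transpA^n$ produces exactly the patterns $i/2-1, 1, 1, i/2$ (even $i$) and $(i-1)/2, 1, 3, (i-1)/2$ (odd $i$) announced in the proposition, and that no unintended simplification shortens the stated period.
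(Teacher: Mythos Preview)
Your proposal is correct and follows exactly the route taken in the paper: the proposition is not given a separate proof in the text but is obtained by specialising the computation of $H = M(\Det(M)S_0 + \m{i})M\m{i}$ and the ensuing choice of $B,C$ via Corollary~\ref{H} to the three symmetric matrices $M = I_2$, $M = \m{j}$, $M = \m{(j,\pat{M},j)}$, and then reading off the periodic continued fraction from the $T$-decomposition. The only point worth noting is that for $M = \m{j}$ one has $\Det(M) = -1$, so it is $\transp{R}$ rather than $R$ that enters the factorisation of $H$; this is implicit in your use of the general formula but should be tracked when you write out the explicit $B$ and $C$ for that case.
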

Ici, $i^n$ signifie que l'entier $i$ est r\'ep\'et\'e $n$ fois, et de m\^eme $( j,\pat{M}, j, i )^n$ signifie que le motif $ j, a_1, a_2, ... , a_2, a_1, j, i $ est r\'ep\'et\'ee $n$ fois.

\begin{ex}
	En choisissant dans la proposition pr\'ec\'edente $i =2$, $j=2$ et $\pat{M} = (1,1,1,1,2,1,1,1,1,1,2,1,1,1,1)$, on obtient pour tout entier $n$ la fraction continue p\'eriodique
	$$ [ \overline{ 1, 1, 1, 2, (\pat{M},2,2,2)^n, \pat{M}, 3, 1, 1, 2, \pat{M}, (2, 2, 2, \pat{M})^n } ] \in \Q[\sqrt{2}]$$
	de longueur $36n+38$, et n'ayant que des $1$ et des $2$, \`a l'exception d'un $3$.
\end{ex}

\begin{proof}[Vérification]
	La proposition~\ref{suitesijMj} donne la suite de fractions continues p\'eriodiques :
	$$ [ \overline{0,1,1,1,(2,\pat{M},2,2)^n,2,\pat{M},1,1,1,2,\pat{M},2,(2,2,\pat{M},2)^n} ] $$
	qui devient par permutation circulaire et décalage des puissances :
	$$ [ \overline{1,1,1,2,(\pat{M},2,2,2)^n,\pat{M},2,0,1,1,1,2,\pat{M},(2,2,2,\pat{M})^n} ] $$
	et on obtient enfin la suite annonc\'ee en utilisant la relation $\m{(2, 0, 1)} = \m{3}$ (ce qui revient \`a remplacer le motif 2,0,1 par 3). \\
	Il suffit alors de vérifier que le corps de la matrice $A = \m{(2, 2, 2, 1, 1, 1, 1, 2, 1, 1, 1, 1, 1, 2, 1, 1, 1, 1)}$ est $\Q[\sqrt{2}]$.
	On a $A = \left( \begin{array}{cc} 7918 & 12929 \\ 19159 & 31284 \end{array} \right)$, donc $\discr(A) = 1 536 796 800 = 2 \times 27 720^2$.
\end{proof}


\begin{rem}
	Il est possible de r\'e\'ecrire matriciellement sous une forme plus simple les suites de fractions continues donn\'ees dans la proposition~\ref{suitesijMj} : \\
	
	Le corps de la matrice $S_1(M\m{i})^nS_2(M\m{i})^n$ est $\Q[\sqrt{\discr(M\m{i})}]$, \\
	
	\noindent pour toute matrice sym\'etrique $M$, pour tout entier $i \geq 1$, et pour tout entier $n$,
	o\`u $S_1$ et $S_2$ sont deux sym\'etries de $GL_2(\Z)$, donn\'ees par 
	
	\noindent $$\left\{\begin{array}{ll}
			S_1 = {\m{i}}^{-1}\m{i/2-1}\m{1}\m{1}\m{i/2} = \left(
			\begin{array}{cc}
				-i-1 & -i^2/2-i\\
				2  & i+1\\
			\end{array}
			\right) \\
			S_2 = \m{i}^{-1}\m{j}^{-1}\m{j-1}\m{1}\m{1} = \left(
			\begin{array}{cc}
				1 &2+i\\
				0 &-1\\
			\end{array}
			\right)
			\end{array}\right\}
			\text{si $i$ est pair},$$
			
	\noindent $$\left\{\begin{array}{ll}
			S_1 = \m{i}^{-1}\m{(i-1)/2}\m{1}\m{3}\m{(i-1)/2} = \left(
			\begin{array}{cc}
				-2i+1 & -i^2+i\\
				4 & 2i-1\\
			\end{array}
			\right) \\
			S_2 = \m{i}^{-1}\m{j}^{-1}\m{j+1}\m{j-1}\m{j}^{-1} = \left(
			\begin{array}{cc}
				-1 &1-i\\
				0 &1\\
			\end{array}
			\right)
			\end{array}\right\} \text{sinon},$$
	o\`u $j$ est un entier quelconque.
\end{rem}

La proposition suivante nous donne l'existence, dans tout corps quadratique r\'eel, d'une infinit\'e de fractions continues p\'eriodiques qui n'utilisent que trois entiers diff\'erents.

\begin{prop} \label{suite-1-2-s}
	Soit $\delta$ un entier non carr\'e.
	Alors il existe un entier $s \geq 1$ tel que pour tout entier $n$,
	$$ [ \overline{ 2,1,1,1, (s, 1,1,2,1,1)^n, s,1,2,1,1,1,1,s, (1,1,2,1,1,s)^n } ] \in \Q[\sqrt{\delta}]. $$
\end{prop}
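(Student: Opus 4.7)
L'idée est d'identifier la fraction continue annoncée avec le produit matriciel $B A_s^n C_s \transp{A_s}^n$, où $B = \m{(2,1,1,1)}$, $A_s = \m{(s,1,1,2,1,1)}$ et $C_s = \m{(s,1,2,1,1,1,1,s)}$, puis de montrer que le corps $\Q[\sqrt{\discr(A_s)}]$ atteint tout corps quadratique réel donné lorsque $s$ parcourt les entiers positifs convenables.

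On vérifie d'abord directement que $B + \transp{B}$ et $C_s + \transp{C_s}$ sont toutes deux de rang~$1$. Ceci nous place dans les hypothèses de la proposition~\ref{FQ}, qui fournit une matrice de rang~$1$ notée $H_s$ telle que pour toute matrice $M \in M_2(\R)$,
\[ \discr(B M C_s \transp{M}) = \Tr^2(H_s M)\bigl(\Tr^2(H_s M) - 4 \Det(M)\bigr). \]
Les décompositions $B - S_0 = b_0 \transp{b_0}$ avec $b_0 = \sqrt{2}\,\transp{(1,2)}$ et $C_s - S_0 = c_0 \transp{c_0}$ avec $c_0 = \sqrt{2}\,\transp{(3, 3s+2)}$ donnent alors, via la remarque~\ref{rqFQ}, $H_s = c_0 \transp{b_0} = \begin{pmatrix} 6 & 12 \\ 6s+4 & 12s+8 \end{pmatrix}$.

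On établit ensuite l'identité clé $\Tr(H_s A_s^n) = \Tr(A_s^{n+1})$ pour tout $n \in \Z$. Par le lemme~\ref{2-toujours}, il suffit de vérifier que $\Tr(H_s - A_s) = 0$ et $\Tr((H_s - A_s) A_s) = 0$, ce qui est un calcul direct (on trouve $H_s - A_s = \begin{pmatrix} -1 & 0 \\ -s & 1 \end{pmatrix}$). Comme $\Det(A_s) = 1$, il en résulte
\[ \discr(B A_s^n C_s \transp{A_s}^n) = \Tr^2(A_s^{n+1}) \cdot \discr(A_s^{n+1}), \]
et par la propriété~3 rappelée au début de la section~\ref{pos}, le corps commun de la suite $B A_s^n C_s \transp{A_s}^n$ est exactement $\Q[\sqrt{\discr(A_s)}]$.

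Il ne reste qu'à calculer $\discr(A_s) = (14 + 12s)^2 - 4 = 48(s+1)(3s+4)$ et à choisir $s$ pour que ce corps soit $\Q[\sqrt{\delta}]$. L'équation $3(s+1)(3s+4) = \delta y^2$ équivaut, en multipliant par $4$ et en complétant le carré, à l'équation de Pell-Fermat $(6s+7)^2 - 4\delta y^2 = 1$ avec la contrainte $6s+7 \equiv 1 \pmod{6}$. C'est l'obstacle principal : il faut garantir une solution non triviale satisfaisant cette congruence. Cela résulte de la périodicité modulo~$6$ de la suite des premières coordonnées $X_n$ des solutions $(X_n, Y_n)$ de $X^2 - 4\delta Y^2 = 1$ : la solution triviale $X_0 = 1 \equiv 1 \pmod 6$ garantit, par périodicité du système récurrent linéaire, une infinité d'indices $n$ pour lesquels $X_n \equiv 1 \pmod 6$, et comme $X_n$ tend vers l'infini le long de cette progression, on obtient $s := (X_n - 7)/6 \geq 1$ pour $n$ assez grand, ce qui conclut.
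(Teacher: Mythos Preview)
Your argument is correct. The structural part (verifying that $B+\transp{B}$ and $C_s+\transp{C_s}$ have rank~$1$, computing $H_s$, and establishing $\Tr(H_s A_s^n)=\Tr(A_s^{n+1})$) is exactly the special case $\pat{S}=(s)$ of the paper's Proposition~\ref{suite-sym}, which you have in effect reproved from scratch; the paper simply invokes that proposition and is thereby reduced in one line to the problem of choosing $s$ so that $\Q[\sqrt{\discr(\m{(1,1,2,1,1,s)})}]=\Q[\sqrt{\delta}]$. The genuine divergence is in how $s$ is chosen. You set up $(6s+7)^2-4\delta y^2=1$ together with the congruence $6s+7\equiv 1\pmod 6$, and then appeal to periodicity of the Pell orbit modulo~$6$. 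The paper instead observes that with $s=3\delta y^2-1$, where $(x,y)$ is any nontrivial solution of $x^2-9\delta y^2=1$, one gets directly $48(s+1)(3s+4)=48\cdot 3\delta y^2\cdot x^2=(12xy)^2\delta$, so no congruence condition is needed at all. Your route works, but the paper's substitution is shorter and sidesteps the mod~$6$ discussion entirely.
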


\begin{cor}
	Pour tout corps quadratique $\Q[\sqrt{\delta}]$, il existe un réel $m_\delta$ et une infinit\'e de fractions continues p\'eriodiques $[ \overline{ a_0, a_1, ..., a_n } ]  \in \Q[\sqrt{\delta}] $ avec $1 \leq a_i \leq m_{\delta}$.
\end{cor}

Voici des exemples de suites de fractions continues qui ne comportent que les entiers 1 et 2 et qui restent dans un corps quadratique donné :

\begin{ex} \label{suite-7}
	Pour tout entier $n$, la fraction continue p\'eriodique suivante est dans $\Q[\sqrt{7}]$ :
	$$ [ \overline{ 2,1,1,1, (1,1,1,1,1,1,1,2,1,2)^n, 1,1,1,1,2,1, (2,1,2,1,1,1,1,1,1,1)^n } ]. $$
\end{ex}

\begin{prop} \label{suite-sym}
	Pour tout uplet sym\'etrique d'entiers $\pat{S} = (a_1, a_2, ... , a_2, a_1)$, et pour tout entier $n$, la fraction continue p\'eriodique
	$$ [ \overline{ 2,1,1,1, (\pat{S}, 1,1,2,1,1)^n,\pat{S},1,2,1,1,1,1,\pat{S}, (1,1,2,1,1,\pat{S})^n } ] $$ est dans $\Q[\sqrt{\discr(\m{(a_1, a_2, ..., a_2, a_1, 1, 1, 2, 1, 1)})}]$.
\end{prop}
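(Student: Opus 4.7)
The plan is to recognize the displayed continued fraction as a matrix product of the form $BA^nC\transp{A}^n$ and then invoke Corollary~\ref{S->suite}.

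Using that $\pat{S}$ is symmetric, the reverse of $(\pat{S},1,1,2,1,1)$ is $(1,1,2,1,1,\pat{S})$, so setting $A := \m{(\pat{S},1,1,2,1,1)}$, $B := \m{(2,1,1,1)}$ and $C := \m{(\pat{S},1,2,1,1,1,1,\pat{S})}$, we have $\transp{A} = \m{(1,1,2,1,1,\pat{S})}$; by Remark~\ref{rem_mat-fc} the period in the statement is then the continued fraction expansion associated to the positive matrix $BA^nC\transp{A}^n$. Factor $A = MN$ with $M := \m{\pat{S}}$ and $N := \m{(1,1,2,1,1)}$: both are symmetric elements of $\Gamma$ because their defining tuples are palindromes, and $\Det(N) = (-1)^5 = -1$.

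Set $S := S_0 N$. Then $\Det(S) = -1$; by Lemma~\ref{tr-sym} applied to the symmetric matrices $N$ and $NMN$, we also get $\Tr(S) = \Tr(SA) = 0$. Corollary~\ref{S->suite} thus produces matrices in $\Gamma$ with rank-one sums for which the family $\cdot A^n \cdot \transp{A}^n$ stays in the quadratic field of $A$, namely $\Q[\sqrt{\discr(A)}]$. Since trace and determinant are invariant under cyclic permutation of the factors $\m{a_i}$, so is the discriminant, whence this field equals $\Q[\sqrt{\discr(\m{(a_1,a_2,\ldots,a_2,a_1,1,1,2,1,1)})}]$ as stated.

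It remains to show that the specific $B$ and $C$ above are valid choices. Proposition~\ref{B+tB} furnishes the rank-one sum decompositions, via $\transp{B} = \m{(1,1,1,2)}$ and $C = \m{(\pat{S},1)} \cdot \m{(2,1,1,1)} \cdot \m{(1,\pat{S})}$, and Corollary~\ref{H} attaches to the pair $(B,C)$ an explicit rank-one matrix $H$. Proposition~\ref{FQ} then gives $\discr(BA^nC\transp{A}^n) = \Tr^2(HA^n)\bigl(\Tr^2(HA^n) \pm 4\Det(A^n)\bigr)$, so it suffices to verify $\Tr(HA^n) = \Tr(A^{n+2})$; by Proposition~\ref{HS}, this amounts to $H = S + A^2 = S_0 N + (MN)^2$, which matches the $H$ read off from the decompositions of $B$ and $C$ after the equivalences of Proposition~\ref{H->Gamma}.

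The main obstacle is this last identification of the two constructions of $H$. The calculation is mechanical given the explicit formulas of Corollary~\ref{H} and the entries of $\m{(n-1,1,k-1,n)}$, but care is required because a rank-one matrix admits several factorizations of the form $P \cdot \mathrm{diag}(0,e) \cdot Q$ with $P,Q \in \Gamma$ (Proposition~\ref{H->Gamma}), and one must pick representatives compatible with the tuples $(2,1,1,1)$ and $(\pat{S},1,2,1,1,1,1,\pat{S})$.
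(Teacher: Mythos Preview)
Your overall set-up is correct: the period is indeed $BA^nC\transp{A}^n$ with $A=\m{(\pat{S},1,1,2,1,1)}$, $B=\m{(2,1,1,1)}$, $C=\m{(\pat{S},1,2,1,1,1,1,\pat{S})}$, and the appeal to Corollary~\ref{S->suite} correctly manufactures \emph{some} pair of matrices in $\Gamma$ that keeps the family in the field of $A$. But, as you yourself note, this does not yet touch the specific $B$ and $C$ of the statement, so the whole proof rests on your last two paragraphs, and there the key identification is wrong.

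The rank-one matrix $H$ attached by Corollary~\ref{H} to the pair $(B,C)$ is (with $M:=\m{\pat{S}}$ and $H_0:=\begin{pmatrix}2&4\\4&8\end{pmatrix}$) equal to $M\m{1}H_0$, and one checks directly that $\m{1}H_0-\m{(1,1,2,1,1)}=\begin{pmatrix}0&1\\-1&0\end{pmatrix}=-S_0$, so that $H=MN-MS_0=A-MS_0$. In the language of Proposition~\ref{HS} this corresponds to $b=1$ and $S_{\mathrm{HS}}=-MS_0$, \emph{not} to $b=2$ and your $S_0N$: one gets $\Tr(HA^n)=\Tr(A^{n+1})$, not $\Tr(A^{n+2})$. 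Concretely, for $\pat{S}=(1)$ one computes $H=\begin{pmatrix}6&12\\10&20\end{pmatrix}$, whereas $S_0N+(MN)^2=\begin{pmatrix}174&300\\290&500\end{pmatrix}$. Your ``mechanical'' matching therefore cannot succeed as stated; the matrix $S_0N+(MN)^2$ is a perfectly good rank-one matrix satisfying a trace relation with shift $2$, but it corresponds to a \emph{different} (larger) pair $(B',C')$, not to the $B$ and $C$ of the proposition. The paper's route---computing $H$ from Corollary~\ref{H} and then observing the one-line identity $\m{1}H_0-N=-S_0$---avoids this detour entirely. You should also track the sign $\pm$ in Proposition~\ref{FQ} via the $\epsilon_B,\epsilon_C$ of Remark~\ref{rqFQ}; here $\epsilon_B\epsilon_C=-\Det(M)$, which is exactly what is needed to turn $\Tr^2(HA^n)\pm4\Det(A^n)$ into $\Tr^2(A^{n+1})-4\Det(A^{n+1})=\discr(A^{n+1})$.
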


En particulier en choisissant $\pat{S}=1^{n-5}$ (c'est-\`a-dire l'entier 1 r\'ep\'et\'e $n-5$ fois), on a des suites de fractions continues p\'eriodiques form\'ees seulement des entiers $1$ et $2$ dans $\Q[\sqrt{f_n f_{n+2}}]$, pour tout $n \geq 3$, o\`u $(f_n)_{n \in \N}$ est la suite de Fibonnacci, d\'efinie par $f_0 = 0$, $f_1 = 1$ et pour tout entier n positif ou nul $f_{n+2} = f_{n+1}+f_{n}$. Cela nous donne une infinit\'e de corps quadratiques d'apr\`es le lemme suivant :

\begin{lemme}
	L'ensemble des corps quadratiques $\Q[\sqrt{f_nf_{n+2}}]$ est infini.
\end{lemme}

\begin{proof}
	Il est bien connu que pour tout $n$, les entiers $f_n$ et $f_{n+2}$ sont premiers entre eux.
	Montrons que pour tout nombre premier $p$, il existe un entier $n \geq 1$ tel que $p$ divise $f_n$.
	La matrice $\m{1}^n$ s'\'ecrit :  $\m{1}^n = \left(
			\begin{array}{cc}
				f_{n-1} & f_n\\
				f_n & f_{n+1}\\
			\end{array}
			\right)$, et si l'on note $o$ l'ordre du groupe fini multiplicatif $GL_2(\Z/{p\Z})$, on a $\m{1}^o \equiv I_2$ (mod $p$), donc $f_o$ est divisible par $p$.
	Utilisons alors la propri\'et\'e suivante des nombres de Fibonacci :
	
	\begin{props}
		Soit $p$ un nombre premier impair, et soit $n \geq 1$ un entier.
		Si $q = p^k$ est la plus grande puissance de $p$ divisant $f_n$, avec $k\geq 1$, alors $pq$ est la plus grande puissance de $p$ divisant $f_{pn}$.
	\end{props}
	
	\begin{proof}
		Comme $f_{n+1} = f_n + f_{n-1}$, on a $\m{1}^n \equiv cI_2$ (mod $q$) pour $c = f_{n-1}$.
		On peut donc \'ecrire $\m{1}^n = cI_2 + qA$, pour une matrice $A \in M_2(\Z)$.
		On a alors
		$$\m{1}^{pn} = (cI_2 + qA)^p = \sum_{i=0}^{p}{ \binom{p}{i} c^i(qA)^{p-i}} \equiv c^pI_2 + pc^{p-1}qA \quad (\mathrm{mod} \  p^2q)$$
		puisque $p^2q$ divise $\binom{p}{i} c^i(qA)^{p-i}$ d\`es que $i<p-1$.
		Donc on obtient $f_{np} \equiv pc^{p-1}f_n$ (mod $p^2q$), et comme $c$ est premier \`a $p$,
		cela donne bien que la plus grande puissance de $p$ divisant $f_{np}$ est $qp$.
	\end{proof}
	
	Ce dernier lemme permet d'obtenir, pour tout nombre premier $p$ impair, un entier $n$ pour lequel $p$ divise le facteur sans carr\'e de $f_n$ et donc aussi le facteur sans carr\'e de $f_nf_{n+2}$.
	On en d\'eduit que l'ensembles des corps quadratiques $\Q[\sqrt{f_nf_{n+2}}]$ est infini.
\end{proof}

La proposition~\ref{suite-sym} est une cons\'equence imm\'ediate de la proposition~\ref{suitesijMj}, mais voici une vérification directe :

\begin{proof}[Preuve de la proposition~\ref{suite-sym}] 
	Soient $S \in \Gamma$ symétrique, $A = S \m{(1, 1, 2, 1, 1)}$, $B = \m{(2, 1, 1, 1)}$ et $C = S\m{(1, 2, 1, 1, 1, 1)}S$.
	Soit alors $H = S\m{1}H_0$, la matrice donn\'ee par le corollaire~\ref{H}, o\`u $H_0 = \m{2} \left(
	\begin{array}{cc}
	0 & 0\\
	0 & 2
	\end{array}
	\right) \m{2} = \left(
	\begin{array}{cc}
	2 &4\\
	4 &8
	\end{array}
	\right)$.
	
	Pour obtenir le r\'esultat, il suffit de montrer que $\Tr(\m{1}H_0M) = \Tr(\m{(1, 1, 2, 1, 1)}M)$, pour toute matrice $M$ sym\'etrique.
	En effet, en prenant $M = (S\m{(1, 1, 2, 1, 1)})^nS$, on obtient alors $\Tr(HA^n) = \Tr(A^{n+1})$ qui donne bien le r\'esultat d'apr\`es la proposition~\ref{FQ} : 
	$$
	\begin{aligned}
		\discr(BA^nC\transp{A}^n) &= \Tr^2(HA^n)(\Tr^2(HA^n) + 4\Det(S)\Det(A^n)) \\
							&= \Tr^2(HA)(\Tr^2(A^{n+1}) - 4\Det(A^{n+1})) \\
							&= \Tr^2(HA)\discr(A^{n+1}).
	\end{aligned}
	$$
	Le signe $+\Det(S)$ qui appara\^it devant le terme $\Det(A^n)$ dans la premi\`ere des \'egalit\'es ci-dessus est d\^u au fait que si $B$ et $C$ sont respectivement de la forme $N_B+\epsilon_B S_0$ et $N_C+\epsilon_C S_0$ pour des matrices $N_B$ et $N_C$ sym\'etriques de rang $1$, alors $\epsilon_B \epsilon_C = -\Det(S)$ (voir la remarque~\ref{rqFQ}).
	
	Or, l'\'egalit\'e $\Tr(\m{1}H_0M) = \Tr(\m{(1, 1, 2, 1, 1)}M)$ pour toute matrice $M$ sym\'etrique d\'ecoule simplement de la relation $\m{1} \left(
	\begin{array}{cc}
	2 &4\\
	4 &8
	\end{array}
	\right) - \m{(1, 1, 2, 1, 1)} = \left(
	\begin{array}{cc}
	0 &1\\
	-1 &0
	\end{array}
	\right)$ et du lemme~\ref{tr-sym}.
\end{proof}

\begin{proof}[Vérification de l'exemple~\ref{suite-7}] 
	On a \`a nouveau une suite de la forme $BA^nC\transp{A}^n$ avec cette fois $A = \m{(1, 1, 1, 1, 1, 1, 1, 2, 1, 2)}$, $B = \m{(2, 1, 1, 1)}$ et $C = \m{(1, 1, 1, 1, 2, 1)}$.
	On d\'emontre pour tout $n$ l'\'egalit\'e suivante :
	$$\Tr(H(\m{(1, 1, 1, 1, 1, 1, 1, 2, 1, 2)})^n) = \Tr((\m{(1, 1, 1, 4)})^{2n+1})$$
	o\`u $H = \m{(1, 2)} \left(
	\begin{array}{cc}
	0 & 0\\
	0 & 2
	\end{array}
	\right) \m{2} = \left(
	\begin{array}{cc}
	4 &8\\
	6 &12
	\end{array}
	\right).$
	
	Le corps de la matrice $\m{(1, 1, 1, 4)} = \left(
	\begin{array}{cc}
	2 &9\\
	3 &14
	\end{array}
	\right)$ est bien $\Q[\sqrt{7}]$, donc cela donnera bien le r\'esultat, d'apr\`es la proposition~\ref{Hunit} et la remarque~\ref{rqHunit}, et parce-qu'on est dans le cas o\`u $B$ et $C$ sont toutes les deux des matrices de la forme $N + S_0$, avec $N$ matrice sym\'etrique de rang 1, et o\`u $\Det(\m{(1, 1, 1, 4)}) = 1$. 
	D'apr\`es le lemme~\ref{2-toujours}, comme $\m{(1, 1, 1, 1, 1, 1, 1, 2, 1, 2)} = \left(
	\begin{array}{cc}
	47 & 128\\
	76 & 207
	\end{array}
	\right)$ et $(\m{(1, 1, 1, 4)})^2 = \left(
	\begin{array}{cc}
	31 &144\\
	48 &223
	\end{array}
	\right)$ ont m\^emes valeurs propres (car m\^emes traces et m\^emes d\'eterminants), il suffit de d\'emontrer cette \'egalit\'e pour $n=0$ et $n = 1$.
	Pour $n = 0$, on a $\Tr(H) =  16 = \Tr(\m{(1, 1, 1, 4)})$.
	Pour $n = 1$, on a $\Tr(H\m{(1, 1, 1, 1, 1, 1, 1, 2, 1, 2)}) = 4048 = \Tr((\m{(1, 1, 1, 4)})^3)$.
\end{proof}

\begin{proof}[Preuve de la proposition~\ref{suite-1-2-s}]
	Il suffit de d\'emontrer qu'\'etant donn\'e un entier $\delta$ non carr\'e, il existe un entier $s \geq 1$ tel que le corps de la matrice $\m{(1, 1, 2, 1, 1, s)}$ est $\Q[\sqrt{\delta}]$ (c'est ensuite un cas particulier de la proposition~\ref{suite-sym}).
	Or, on a $\discr(\m{(1, 1, 2, 1, 1, s)}) = 48(s+1)(3s+4)$. Donc il suffit de prendre $s = 3y^2\delta-1$ o\`u $(x,y)$ est une solution \`a l'\'equation de Pell-Fermat $x^2-9\delta y^2 = 1$.
\end{proof}

\subsection{Suites de type Wilson} \label{suites-wilson}

On a vu comment l'on pouvait trouver des suites de fractions continues p\'eriodiques en cherchant les matrices $H$ non inversibles du corollaire~\ref{H} \`a coefficients entiers. Cela revient \`a chercher $\Tr(HA)$ comme solution $x$ enti\`ere de l'\'equation de Pell-Fermat $x^2-\delta y^2 = \pm 4$.
Un autre cas int\'eressant est celui o\`u $H$ est de la forme $\sqrt{\delta}H'$, avec $H'$ enti\`ere, et o\`u $\delta$ est le discriminant de la matrice $A$. On doit alors chercher $\Tr(H'A)$ comme solution $y$ (et non plus $x$) de la m\^eme \'equation de Pell-Fermat, puisque l'on a alors $\discr(BAC\transpA) = \delta (\Tr(H'A))^2(\delta (\Tr(H'A))^2 \pm 4)$, que l'on veut de la forme $\delta x^2$.

Wilson donne des suites qui rentrent dans ce cadre (voir \cite{wilson}), comme par exemple : \\
$$ [ \overline{ (s(s+4)-1), 1, (s, 1)^n, s+2, (s, 1)^n, s+1 } ] \in \Q[\sqrt{s(s+4)}] $$
que l'on peut r\'e\'ecrire avec des entiers plus petits (en choisissant d'autres matrices $B$ et $C$ pour la m\^eme matrice $H$ gr\^ace au corollaire~\ref{H}) : \\
$$ [ \overline{ s, 1, s-1, s+1, (1, s)^n, 1, s+1, s+3, 1, (s, 1)^n } ] \in \Q[\sqrt{s(s+4)}]$$

Cependant, il sera impossible d'obtenir des suites uniform\'ement born\'ees avec une borne ind\'ependante de $\delta$ avec des suites de ce type, puisque si l'on a $H = P \left(
\begin{array}{cc}
0 &0\\
0 &d
\end{array}
\right)Q$ avec $P, Q \in \Gamma$, alors la proposition~\ref{B+tB} et le corollaire~\ref{H} montrent que l'on a n\'ecessairement au moins un facteur $\m{i}$ avec $i > \sqrt{\delta} - 1$ dans la d\'ecomposition de l'une des matrices $B$ ou $C$.

\subsection{R\'eels quasi-palindromiques} \label{rqp}
\`A la vue du th\'eor\`eme \ref{MN}, on peut se demander quels sont les r\'eels quasi-palindromiques, c'est-\`a-dire les r\'eels qui ont un d\'eveloppement en fraction continue p\'eriodique quasi-palindromique. La proposition suivante, r\'epond \`a la question :

\begin{prop}
	Soit $x \in \Q[\sqrt{\delta}]$. On a \'equivalence entre :
	\begin{enumerate}
		\item $x$ est quasi-palindromique,
		\item $\Tr(x) = \floor{x}$ et $x > 1$,
		\item $\Tr(x) \in \Z$, $x > 1$ et $-1 < \overline{x} < 0$.
	\end{enumerate}
\end{prop}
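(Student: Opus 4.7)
Je traiterais d'abord l'équivalence $(2) \Leftrightarrow (3)$, qui est purement formelle grâce à l'identité $\overline{x} = \Tr(x) - x$. Si $(3)$ est vérifiée, alors $-\overline{x} \in (0,1)$ et $\Tr(x) \in \Z$, donc l'écriture $x = \Tr(x) + (-\overline{x})$ force $\lfloor x \rfloor = \Tr(x)$, soit $(2)$. Réciproquement, $(2)$ donne $\Tr(x) \in \Z$ et $\overline{x} = \lfloor x \rfloor - x = -\{x\} \in (-1,0]$ ; le cas $\overline{x} = 0$ impliquerait $x$ rationnel, puis $\Tr(x) = 2x \neq \lfloor x \rfloor$ (puisque $x > 1$), ce qui est exclu.

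Pour $(1) \Leftrightarrow (3)$, je traduirais la situation matriciellement. D'après la proposition~\ref{carGamma}, les réels $x > 1$ vérifiant $\overline{x} \in (-1,0)$ sont exactement les $x_M$ pour un certain $M \in \Gamma \setminus \{I_2\}$ : via la remarque~\ref{rem_mat-fc}, ce sont précisément les réels admettant un développement en fraction continue purement périodique. En écrivant $M = \begin{pmatrix} a & b \\ c & d \end{pmatrix}$, le réel $x_M$ est racine du polynôme $bX^2 + (a-d)X - c$, d'où $\Tr(x_M) = (d-a)/b$. La condition $(3)$ se reformule ainsi en : $x = x_M$ pour un $M \in \Gamma \setminus \{I_2\}$ tel que $b \mid d - a$. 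Le sens $(1) \Rightarrow (3)$ est alors immédiat : si $M = \m{a_0} S$ avec $a_0 \geq 1$ et $S \in \Gamma$ symétrique, un calcul direct donne $d - a = a_0 b$, donc $\Tr(x_M) = a_0 \in \Z$.

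Pour la réciproque, étant donné $M \in \Gamma \setminus \{I_2\}$ avec $b \mid d - a$, je poserais $a_0 := (d-a)/b$, entier $\geq 1$ puisque $\Tr(x_M) = x_M + \overline{x_M} > 0$, et
\[
    S := \m{a_0}^{-1} M = \begin{pmatrix} c - a_0 a & a \\ a & b \end{pmatrix},
\]
qui est manifestement symétrique et à coefficients entiers. Le point technique principal sera de vérifier que $S \in \Gamma$, via le critère de positivité~\ref{ppt-pos} de la proposition~\ref{carGamma}, à savoir $0 \leq c - a_0 a \leq a \leq b$. L'identité $b(c - a_0 a) = a^2 - \Det(M)$ (conséquence de $\Det(S) = -\Det(M) = \pm 1$) autorise une discussion rapide selon le signe de $\Det(M)$, en exploitant que $a < b$ dès que $\Det(M) = -1$ et $a \geq 1$ (le cas $a = 0$ imposant $M = \m{d}$ et $S = I_2$, trivialement dans $\Gamma$), pour obtenir toutes les inégalités requises. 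Une fois $S \in \Gamma$ symétrique établi, l'unicité de la décomposition dans $\Gamma$ (remarque suivant la proposition~\ref{carGamma}) force $S$ à s'écrire palindromiquement sous la forme $\m{a_1} \m{a_2} \cdots \m{a_2} \m{a_1}$, d'où $x = x_M = [\overline{a_0, a_1, a_2, \ldots, a_2, a_1}]$ est quasi-palindromique.
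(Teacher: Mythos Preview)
Your proof is correct. The paper itself does not give a detailed argument: it merely invokes Galois's classical lemma on the mirror of a purely periodic continued fraction (referencing Perron), from which one extracts $\Tr(x) = a_0$ by comparing $-1/\overline{x} = [\overline{a_{n-1}, \ldots, a_1, a_0}]$ with $1/(x - a_0) = [\overline{a_1, \ldots, a_{n-1}, a_0}]$, and conversely deduces the palindromic shape of $(a_1, \ldots, a_{n-1})$ from the equality of these two expansions. Your route is genuinely different and more in the spirit of the paper's own toolbox: you translate everything via Proposition~\ref{carGamma} into the condition $b \mid d-a$ on the matrix $M \in \Gamma$, factor $M = \m{a_0} S$ with $S$ symmetric, and verify $S \in \Gamma$ directly via the positivity criterion~\ref{ppt-pos}. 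This avoids any external reference and makes the link with the matrix condition $b \mid d-a$ (which reappears in the second proof of Th\'eor\`eme~\ref{MN2}) completely explicit; the price is the small case analysis on $\Det(M)$ and $a$, which you handle cleanly.
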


On peut démontrer ce résultat en utilisant un lemme de \'E. Galois sur le miroir d'une fraction continue périodique (voir paragraphe 6, p. 83 dans \cite{perron}).

Les r\'eels $\sqrt{\delta}+\floor{\sqrt{\delta}}$ sont donc des exemples de r\'eels palindromiques. Le r\'esultat classique suivant donne une borne optimale sur le d\'eveloppement de ces r\'eels :

\begin{prop}
	Soit $\delta$ un entier positif sans facteur carr\'e.
	Alors les coefficients du d\'eveloppement en fraction continue du r\'eel $\sqrt{\delta}+\floor{\sqrt{\delta}}$ sont major\'es par $2\floor{\sqrt{\delta}}$.
\end{prop}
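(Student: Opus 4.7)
Set $a = \floor{\sqrt{\delta}}$ and $x = \sqrt{\delta} + a$. The plan is to show, more generally, that every complete quotient of $x$ has the form $(P+\sqrt{\delta})/Q$ with $P \leq a$ and $Q \geq 1$, from which the bound on the $a_n$ will follow immediately.

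First I would observe that $x$ is a reduced quadratic irrational: since $a < \sqrt{\delta} < a+1$, we have $x > 1$ and $-1 < \overline{x} = a - \sqrt{\delta} < 0$. By the classical theorem of Galois (or equivalently by the characterization of positive matrices in proposition~\ref{carGamma}), $x$ has a purely periodic continued fraction expansion, and each complete quotient $\alpha_n = (P_n + \sqrt{\delta})/Q_n$ is itself reduced, with $P_n, Q_n \in \Z$. Integrality of $(P_n, Q_n)$ follows by induction from $P_0 = a$, $Q_0 = 1$ and the recursion $P_{n+1} = a_nQ_n - P_n$, $Q_{n+1} = (\delta - P_{n+1}^2)/Q_n$, using that $Q_0$ divides $\delta - P_0^2 = \delta - a^2$.

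Next I would exploit the reducedness of $\alpha_n$. The condition $\overline{\alpha_n} < 0$ reads $P_n < \sqrt{\delta}$; since $P_n$ is an integer and $\delta$ is not a perfect square, this forces $P_n \leq a$. Similarly, positivity of $\alpha_n$ and $\alpha_n > 1$ together with reducedness give $Q_n \geq 1$. Therefore
\[
a_n \leq \alpha_n = \frac{P_n + \sqrt{\delta}}{Q_n} \leq P_n + \sqrt{\delta} \leq a + \sqrt{\delta} < 2a+1,
\]
and since $a_n \in \Z$ this yields $a_n \leq 2a = 2\floor{\sqrt{\delta}}$, as required.

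No real obstacle arises here: the argument is a direct application of the standard theory of reduced quadratic irrationals, whose main inputs (pure periodicity and the bounds $0 < P_n < \sqrt{\delta}$, $Q_n \geq 1$ for all complete quotients) are recalled in the paragraph~\ref{rqp} just above. The only specific feature used is $P_0 = a$, $Q_0 = 1$, which places $x$ exactly at the extremal end of the reduced range and gives the clean bound $2\floor{\sqrt{\delta}}$ rather than the crude $\floor{2\sqrt{\delta}}$.
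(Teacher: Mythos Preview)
Your argument is correct and is exactly the standard proof of this classical fact: reducedness of all complete quotients $\alpha_n=(P_n+\sqrt{\delta})/Q_n$ gives $0<P_n<\sqrt{\delta}$ and $Q_n\geq 1$, whence $a_n\leq \alpha_n\leq P_n+\sqrt{\delta}<2\floor{\sqrt{\delta}}+1$.

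The paper itself does not prove this proposition; it introduces it as ``le r\'esultat classique suivant'' and states it without proof. So there is nothing to compare against, and your write-up would serve perfectly well as the omitted proof. Two minor remarks on presentation: (i) the bounds $0<P_n<\sqrt{\delta}$ and $Q_n\geq 1$ are not actually recalled in paragraph~\ref{rqp}, so you should either drop that reference or derive them on the spot (as you essentially do); (ii) the appeal to proposition~\ref{carGamma} is a bit roundabout here---pure periodicity and reducedness of the complete quotients are more naturally obtained directly from Galois's theorem (cited in the paper via~\cite{perron}) than from the matrix characterisation.
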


Si l'on applique le th\'eor\`eme \ref{MN} avec ce r\'eel $\sqrt{\delta}+\floor{\sqrt{\delta}}$, on obtient donc une infinit\'e de fractions continues p\'eriodiques uniform\'ement born\'ees par $4\floor{\sqrt{\delta}}+1$ dans le corps $\Q[\sqrt{\delta}]$.
Cela améliore le résultat de Wilson, puisque la borne qu'il obtient est seulement en $O(\delta)$.



\section{Conjecture de Zaremba} \label{s_zaremba}

Dans cet article, nous nous sommes intéressé au problème de majorer les coefficients de fractions continues périodiques.
Voici une question similaire à propos des fractions continues finies :

\begin{conj}[Zaremba] \label{conj_zaremba}
	Il existe une constante $m$ telle que pour tout entier $q \geq 1$, il existe un entier $p$ premier à $q$ tel que l'on ait
	\[ \frac{p}{q} = [a_0, a_1, a_2, \dots ] \]
	où les entiers $a_i$ sont entre $1$ et $m$.
\end{conj}

La conjecture semble vraie pour $m = 5$.
Elle semble même vraie pour $m = 2$, mais à condition d'exclure un nombre fini de valeurs de $q$.

Des travaux récents de J. Bourgain et A. Kontorovich vont dans le sens de cette conjecture (voir \cite{bk}).
Plus précisément, ils démontrent que l'ensemble des entiers $q$ qui vérifient la conjecture est de densité $1$ dans $\N$, pour la borne $m=50$.

Nous allons montrer que cette conjecture de Zaremba sur les développements en fractions continues de rationnels implique la conjecture \ref{conjM} sur les développements en fractions continues périodiques d'après le résultat nouveau suivant :

\begin{thm} \label{z_mcm}
	Soient $a$, $b$, $c$ et $\delta$ des entiers strictement positifs tels que
	\begin{itemize}
		\item $b$ et $c$ sont solution de l'équation de Pell-Fermat : $c^2 - \delta b^2 = \pm 1$,
		\item $a$ et $c$ sont premiers entre eux et $a < c$.
	\end{itemize}
	Alors on a l'une des égalités
	\[
		\frac{c-a + b \sqrt{\delta}}{c} = [\overline{1,1,a_1-1, a_2, a_3, \dots, a_{n-1}, a_n, 1, 1, a_n - 1, a_{n-1}, a_{n-2}, \dots, a_2, a_1}]
	\]
	ou
	\[
		\frac{c-a + b \sqrt{\delta}}{c} = [\overline{1,1,a_1-1, a_2, a_3, \dots, a_{n-1}, a_n - 1, 1, 1, a_n, a_{n-1}, a_{n-2}, \dots, a_2, a_1}],
	\]
	où $[0,a_1, a_2, \dots, a_{n-1}, a_n]$ est le développement en fraction continue du rationnel $\frac{a}{c}$.
\end{thm}

Si l'on a des entiers nuls dans la fraction continue donnée par ce théorème (c'est le cas si $a_1 = 1$ ou $a_n = 1$), il suffit de remplacer chaque triplet $x, 0, y$ par $x+y$ pour obtenir une vraie fraction continue. Remarquons aussi que les entiers $\delta$ qui vérifient les hypothèses du théorème ne peuvent pas être carrés.


\begin{proof}[Preuve de Zaremba $\imp$ Conjecture \ref{conjM}]
	Soit $m$ la constante donnée par la conjecture de Zaremba, et soit $\delta$ un entier non carré.
	L'équation de Pell-Fermat $c^2 - \delta b^2 = 1$ admet alors une infinité de solutions $(b, c)$.
	Pour chaque entier $c$ assez grand d'un tel couple solution,
	on choisit alors un numérateur $p$, donné par la conjecture de Zaremba, tel que le développement en fraction continue du rationnel $\frac{p}{c}$ ne s'écrive qu'avec des entiers entre $1$ et $m$. 
	Si l'on choisit pour $a$ le reste de la division de $p$ par $c$, alors le théorème \ref{z_mcm} nous donne une fraction continue périodique bornée par $m+1$ et dans le corps $\Q[\sqrt{\delta}]$.
	On obtient bien une infinité de fractions continues périodiques dans un même corps $\Q[\sqrt{\delta}]$, puisque pour deux rationnels $\frac{a}{c}$ distincts, le théorème donne deux fractions continues périodiques distinctes.
\end{proof}


\begin{proof}[Preuve du théorème \ref{z_mcm}]
	Remarquons que les fractions continues données par le théorème s'écrivent matriciellement sous la forme
	\[
		B A C \transpA,
	\]
	où $A := \m{(a_1, a_2, ..., a_n)}$ est la matrice correspondant au développement du rationnel $\frac{c}{a}$,
	$B := \m{(1,1, a_1-1)} {\m{a_1}}^{-1}$ et $C := \left\{ \begin{array}{l}
											{\m{(1,1, a_n-1)}} {\m{a_n}}^{-1} \\
											\text{ ou } \\
											{\m{a_n}}^{-1} \m{(a_n-1,1,1)}
										\end{array} \right.$.
	
	Un rapide calcul donne $B = \begin{pmatrix}
								0 & 1 \\
								-1 & 2
							\end{pmatrix}$
	et $C = B$ ou $\transp{B}$.
	Nous sommes donc dans le cadre de la proposition \ref{FQ}, avec $H := \begin{pmatrix}
																0 & 0 \\
																0 & 2
															\end{pmatrix}$, ce qui nous donne le discriminant
	\[
		\discr(B A C \transpA) = \Tr^2(HA)(\Tr^2(HA) \pm 4).
	\]
	Or, la matrice $A$ peut s'écrire sous la forme $A = \begin{pmatrix}
												u & a \\
												v & c
											\end{pmatrix}$, pour des entiers $u$ et $v$, puisque c'est la matrice correspondant au développement en fraction continue du rationnel irreductible $\frac{c}{a}$.
	On a donc $\Tr(HA) = 2c$, et donc $\discr(B A C \transpA) = 4c^2 (4 c^2 \pm 4)$.
	En choisissant le signe, quitte à choisir $C = B$ ou $C = \transp{B}$, on a donc finalement
	\[
		\discr(B A C \transpA) = 16 c^2 b^2 \delta,
	\]
	puisque $(b,c)$ est solution de l'équation de Pell-Fermat $c^2 - \delta b^2 = \pm 1$.
	Ainsi, le corps de la matrice $B A C \transpA$ est bien $\Q[\sqrt{\delta}]$ comme annoncé, quitte à transposer $C$.
	
	Pour vérifier l'égalité annoncée, on vérifie que le réel quadratique $x := \frac{c-a + b\sqrt{\delta}}{c}$ correspond (par la proposition \ref{carGamma}) à la matrice
	\[
		B A C \transp{A} = \begin{pmatrix}
							2ac \pm 1 & 2c^2 \\
							4ac - 2a^2 \pm2 & 4c^2 -2ac \pm1
						\end{pmatrix}
	\]
	c'est-à-dire que le vecteur $\begin{pmatrix} 1 \\ x \end{pmatrix}$ est bien un vecteur propre.
\end{proof}

Grâce au théorème \ref{z_mcm}, pour obtenir la conjecture \ref{conjMcMullen} de McMullen, il suffit de démontrer la variante suivante de la conjecture de Zaremba, où l'on impose en plus les premiers et le dernier entiers du développement en fraction continue.

\begin{conj}
	Il existe un entier $q_0$ tel que pour tout entier $q \geq q_0$, il existe un entier $p$ premier à $q$ tel que l'on ait
	\[ \frac{p}{q} = [a_0, a_1, a_2, \dots, a_{n-1}, a_n] \] 
	où les entiers $a_i$ valent chacun $1$ ou $2$, et avec de plus les conditions
	$a_1 = 2$ et $a_n = 2$.
\end{conj}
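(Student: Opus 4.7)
The plan is to split the statement into two distinct ingredients: (i) the pure $\{1,2\}$-version of Zaremba's conjecture (\ref{conj_zaremba}) with $m=2$, asserting the existence, for every sufficiently large $q$, of a $p$ coprime to $q$ with $p/q = [a_0; a_1, \dots, a_n]$ and $a_i \in \{1,2\}$; and (ii) an endpoint upgrade forcing $a_1 = a_n = 2$ on top of (i). The statement is then the conjunction, and the second ingredient is by far the easier one.

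For (ii), I would argue by duality and density. In the matrix formalism of section~\ref{pos}, $p/q$ corresponds to the second column of a product $\m{a_0}\m{a_1}\cdots\m{a_n} \in \Gamma_2$, and the involution $p \mapsto p^* \pmod{q}$ given by $pp^* \equiv \pm 1 \pmod{q}$ reverses the expansion, so the conditions $a_1 = 2$ and $a_n = 2$ are dual constraints on the numerator class modulo $q$. I would count the admissible numerators for a given $q$ and show that the proportion satisfying $a_1 = 2$ is bounded below by a positive constant: the cylinder $\{x \in E_2 : a_1(x) = 2\} = T_2(E_2)$ has positive Hausdorff $\dim_H(E_2)$-measure in $E_2$, so a Bourgain--Kontorovich style density argument should give a uniform proportion of admissible $(p,q)$ pairs satisfying the cylinder condition, and then (by duality) a uniform proportion satisfying both.

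For (i), one has to transport the Bourgain--Kontorovich analysis down to the alphabet $\{1,2\}$ and from density~$1$ to \emph{all} large~$q$. Their method analyses exponential sums over the semigroup $\Gamma_m$ modulo $q$ and requires the Hausdorff dimension of $E_m$ to exceed a threshold close to $5/6$; for $m=2$ one has $\dim_H(E_2) \approx 0.531$, well below this threshold, so the direct method breaks down. A possible route is to iterate deeply into the free semigroup $\Gamma_2 = \langle \m{1}, \m{2}\rangle$, using the expansion of the $\Gamma_2$-action on $\mathrm{SL}_2(\Z/q\Z)$ together with the arithmetic input of Theorem~\ref{z_mcm}, which ties $\{1,2\}$-expansions of rationals $p/q$ (with $q$ from Pell solutions) to genuine elements of real quadratic fields. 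Another possible route is to exploit the special self-similar structure $E_2 = T_1(E_2) \cup T_2(E_2)$ and the freeness of $\Gamma_2$ to obtain better exponential-sum estimates than those available for generic alphabets.

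The hard part will unquestionably be (i): lowering Zaremba's alphabet to $\{1,2\}$ runs into the Hausdorff-dimension barrier, and strengthening ``density~$1$'' to ``all large $q$'' is an additional genuine difficulty even for the alphabets currently accessible. The endpoint upgrade (ii) is essentially bookkeeping once (i) is in hand, so the proposal truly reduces the conjecture to the $\{1,2\}$-Zaremba problem itself — a problem that at present seems no easier than McMullen's conjecture~\ref{conjMcMullen} that it is meant to imply.
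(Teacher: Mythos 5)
This statement is an open \textbf{conjecture} in the paper, not a theorem: it is stated as a sufficient condition for Conjecture~\ref{conjMcMullen} via Théorème~\ref{z_mcm}, and the paper offers no proof --- none is known. There is therefore no proof in the source to compare yours against, and you are right to treat your write-up as an analysis of obstacles rather than as an argument.

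Your reading of the difficulty is substantially correct. Part~(i) --- a $\{1,2\}$-alphabet Zaremba statement valid for \emph{all} sufficiently large $q$ --- is the crux, and it sits far below the Hausdorff-dimension barrier ($\dim_H E_2 \approx 0.531$ against the roughly $5/6$ needed for the Bourgain--Kontorovich circle-method estimates), with the upgrade from ``density~$1$'' to ``every large~$q$'' an additional independent obstacle. Two precisions are worth adding. First, the conditions $a_1 = a_n = 2$ are not an incidental cylinder restriction to be patched on afterwards: they are built into the conjecture precisely so that, after Théorème~\ref{z_mcm} replaces $a_1$ and $a_n$ by $a_1 - 1$ and $a_n - 1$, no zero arises that would have to be absorbed by the relation $\m{(x,0,y)} = \m{x+y}$ and could thereby produce an entry equal to~$3$; part~(ii) is thus not mere bookkeeping on top of $\{1,2\}$-Zaremba but the very reason the conjecture takes this exact shape. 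Second, your duality remark ($p \mapsto p^*$ with $pp^* \equiv \pm 1 \pmod q$ reversing the expansion) is correct and does couple the two endpoint constraints, but a positive-proportion count in the Bourgain--Kontorovich style only produces a density statement, whereas the conjecture demands ``every $q \geq q_0$''; so (ii) inherits the same density-to-all-$q$ gap as (i) and is not strictly subordinate to it. In short, you have correctly identified that this conjecture is, as the paper itself presents it, an open $\{1,2\}$-Zaremba-type problem, and no proof should have been expected.
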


\begin{rem} \label{r_conjMcM}
	C'est en cherchant, par ordinateur, des fractions continues de la forme donnée par cette conjecture que nous sommes parvenu à vérifier la conjecture~\ref{conj12} pour tous les corps quadratiques réels $\Q[\sqrt{\delta}]$ pour $\delta < 127$, en utilisant le théorème~\ref{z_mcm} ci-dessus.
\end{rem}

%
%
%


En utilisant le résultat de Bourgain et Kontorovich (voir \cite{bk}) sur la conjecture de Zaremba, nous obtenons le résultat suivant : 

\begin{cor} \label{thm_ibk}
	Soit $\D \subseteq \N$ l'ensemble des entiers $\delta$ sans facteurs carrés vérifiant la conjecture~\ref{mcmullen1} pour la borne $m=51$, c'est-à-dire tels que le corps quadratique $\Q[\sqrt{\delta}]$ contienne une fraction continue périodique bornée par $51$.
	Alors on a
	\[
		\# (\D \cap \llbracket  1,N  \rrbracket) \geq \frac{\sqrt{N}}{C},
	\]
	pour une constante $C > 0$ et pour tout entier $N$ assez grand.
\end{cor}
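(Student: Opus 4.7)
L'id\'ee est de combiner le th\'eor\`eme~\ref{thm_fcfp} avec le r\'esultat de Bourgain et Kontorovich (voir~\cite{bk}), qui garantit que l'ensemble $\mathcal{Z}$ des entiers $q$ pour lesquels il existe $p$ premier \`a $q$ tel que le d\'eveloppement en fraction continue de $p/q$ soit \`a quotients partiels born\'es par $50$, v\'erifie $|\mathcal{Z} \cap [1, T]| \geq (1 - o(1))\, T$ quand $T \to \infty$.

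\`A chaque $c \in \mathcal{Z}$ avec $c \geq 2$, on associe la partie sans facteur carr\'e $\delta_c$ de $c^2 - 1$, et l'entier $b_c$ d\'efini par $c^2 - 1 = \delta_c b_c^2$. Alors $(b_c, c)$ est solution de l'\'equation de Pell-Fermat $c^2 - \delta_c b^2 = 1$. En posant $a := p \bmod c$ pour un num\'erateur $p$ fourni par l'appartenance de $c$ \`a $\mathcal{Z}$, on a $0 < a < c$, $\pgcd(a,c) = 1$, et les quotients partiels du d\'eveloppement de $a/c$ restent born\'es par $50$~; donc les hypoth\`eses du th\'eor\`eme~\ref{thm_fcfp} sont satisfaites. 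Ce dernier produit alors une fraction continue p\'eriodique dans $\Q[\sqrt{\delta_c}]$ \`a coefficients born\'es par $51$ (en rempla\c cant le cas \'ech\'eant les triplets $x, 0, y$ par $x + y$ quand $a_1 = 1$ ou $a_n = 1$). Donc $\delta_c \in \D$.

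En se restreignant aux $c \in \mathcal{Z} \cap [\sqrt{N}/2, \sqrt{N}]$, on a $\delta_c \leq c^2 - 1 < N$, et le cardinal de cet ensemble est minor\'e par $(1 - o(1)) \sqrt{N}/2$ d'apr\`es Bourgain et Kontorovich. L'\'etape cl\'e est alors de montrer que l'application $c \mapsto \delta_c$ est injective sur cet intervalle. En effet, si $c_1 < c_2$ v\'erifiaient $\delta_{c_1} = \delta_{c_2} = \delta$, ce seraient les $c$-coordonn\'ees de deux puissances distinctes de l'unit\'e fondamentale $\epsilon_\delta = c' + b' \sqrt{\delta} \in \Z[\sqrt{\delta}]$. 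Or, comme $c', b' \geq 1$ et $\delta \geq 2$, on a $\epsilon_\delta \geq 1 + \sqrt{2} > 2$, et un calcul direct sur la r\'ecurrence de Pell montre alors que le rapport entre deux $c$-coordonn\'ees cons\'ecutives de la suite des solutions est toujours strictement sup\'erieur \`a $2$, ce qui est incompatible avec $c_1, c_2 \in [\sqrt{N}/2, \sqrt{N}]$.

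On en d\'eduit $|\D \cap [1, N]| \geq (1 - o(1)) \sqrt{N}/2 \geq \sqrt{N}/C$ pour une constante absolue $C$ (par exemple $C = 3$) et $N$ assez grand. L'obstacle principal est le contr\^ole uniforme des rapports de Pell cons\'ecutifs~; il se ram\`ene \`a la minoration uniforme $\epsilon_\delta \geq 1 + \sqrt{2}$, valable pour tout entier $\delta \geq 2$ sans facteur carr\'e, et permet d'absorber le facteur logarithmique parasite que donnerait un argument naif.
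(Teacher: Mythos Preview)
Your argument is correct and takes a genuinely different route from the paper's. Both proofs start the same way: apply the th\'eor\`eme~\ref{thm_fcfp} to the Pell solution $(c,1)$ of $c^2-(c^2-1)\cdot 1^2=1$ for each $c$ in the Bourgain--Kontorovich density-$1$ set $\mathcal{Z}$, producing a periodic continued fraction bounded by $51$ in $\Q[\sqrt{c^2-1}]$. The divergence is in how one controls the map $c\mapsto\delta_c$. The paper restricts further to those $c$ for which $c^2-1$ is itself squarefree (a set of positive density by an elementary sieve), so that $\delta_c=c^2-1$ and injectivity is automatic. You instead keep all of $\mathcal{Z}\cap[\sqrt{N}/2,\sqrt{N}]$, let $\delta_c$ be the squarefree kernel of $c^2-1$, and prove injectivity by observing that two $c$'s with the same $\delta_c$ would give two solutions of the Pell equation $X^2-\delta Y^2=1$ whose $X$-coordinates lie in a ratio at most $2$, which is impossible since consecutive nontrivial $X$-coordinates satisfy $X_{k+1}=2X_1X_k-X_{k-1}\geq(2X_1-1)X_k\geq 3X_k$.

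Each approach trades one short lemma for another: the paper needs the sieve estimate for $n^2-1$ squarefree but gets trivial injectivity; you bypass the sieve but need the uniform Pell gap. Your route uses the full strength of the density-$1$ statement from Bourgain--Kontorovich (the paper only needs positive density after intersecting with the squarefree condition), and is arguably slightly cleaner since the Pell gap is a one-line recurrence argument. One small imprecision: you invoke the fundamental unit $\epsilon_\delta$ of $\Z[\sqrt\delta]$, which may have norm $-1$, whereas your $c$'s are $X$-coordinates of norm-$+1$ solutions; but this is harmless since the generator of the norm-$+1$ solutions is then $\epsilon_\delta^2\geq(1+\sqrt2)^2>2$, and in the norm-$+1$ case one has $c'\geq 2$ directly, so the bound on the ratio holds either way.
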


Ce corollaire se déduit facilement des deux lemmes suivants :

\begin{lemme}
	L'ensemble des entiers $n$ tels que le corps $\Q[\sqrt{n^2-1}]$ contienne une fraction continue périodique bornée par $51$ a pour densité $1$ dans l'ensemble des entiers naturels.
\end{lemme}

\begin{proof}
	Soit $n$ un entier vérifiant la conjecture de Zaremba pour la borne $50$, c'est-à-dire tel qu'il existe un numérateur $p < n$ tel que le rationnel $\frac{p}{n}$ soit irréductible et ait un développement en fraction continue borné par $50$.
	D'après Bourgain et Kontorovich (voir \cite{bk}), l'ensemble de ces entiers $n$ est de densité $1$ dans $\N$.
	
	Or, le couple $(n, 1)$ est une solution évidente de l'équation de Pell-Fermat
	\[
		x^2 - (n^2-1) y^2 = 1.
	\]
	Le théorème~\ref{z_mcm} s'applique donc, et on obtient que le réel quadratique
	\[
		\frac{p + \sqrt{n^2-1}}{n} \quad \in \Q[\sqrt{n^2-1}]
	\]
	a un développement en fraction continue borné par $51$. 
\end{proof}

\begin{rem}
	On a un résultat similaire avec les corps $\Q[\sqrt{n^2+1}]$.
\end{rem}

\begin{lemme}
	L'ensemble des entiers $n$ tels que $n^2-1$ soit sans facteur carré a une densité strictement positive dans $\N$.
	C'est-à-dire que l'on a
	\[
		\liminf_{x \to \infty} \frac{1}{x} \# \{ n < x | n^2-1 \text{ sans facteur carré } \} > 0.
	\]
\end{lemme}

\begin{proof}
	Soit
		$\A_d := \{n < x | n^2-1 \text{ est divisible par } d^2 \}.$
	On a alors
	\[
		\liminf_{x \to \infty} \frac{1}{x} \# \{ n < x | n^2-1 \text{ sans facteur carré } \} \geq 1 - \limsup_{x \to \infty} \frac{1}{x} \sum_{\substack{p  \leq \sqrt{x} \\ p \text{ premier}}} \#A_p.
	\]
	Or, on a
	\[
		\# A_p \leq 2\ceil{\frac{x}{p^2}} \leq 2 + \frac{2x}{p^2},
	\]
	et donc
	\[
		 \limsup_{x \to \infty} \frac{1}{x} \sum_{\substack{p  \leq \sqrt{x} \\ p \text{ premier}}} \#A_p \leq  \limsup_{x \to \infty} \frac{2}{\sqrt{x}} + \sum_{p \text{ premier}} \frac{2}{p^2} < 1.
	\]
\end{proof}

\begin{proof}[Preuve du corollaire~\ref{thm_ibk}]
	Les deux lemmes précédents permettent de dire que l'ensemble suivant a une densité strictement positive
	\begin{align*}
		\A := \{ n | & n^2-1 \text{ est sans facteur carré et } \\
		 		& \Q[\sqrt{n^2-1}] \text{ contient une fraction continue périodique bornée par } 51 \}.
	\end{align*}
	On a donc finalement
	\[
		\# (\D \cap \llbracket  1,N  \rrbracket) \geq \# (\A \cap \llbracket  1, \floor{\sqrt{N+1}}  \rrbracket) \geq \frac{\sqrt{N}}{C},
	\]
	pour une constante $C > 0$ et pour tout entier $N$ assez grand.
\end{proof}


\end{document}